\numberwithin{equation}{section}
\numberwithin{table}{section}
\numberwithin{figure}{section}
\newtheorem{lemma}{Lemma}[section]
\newtheorem{remark}{Remark}[section]
\newtheorem{theorem}{Theorem}[section]
\newtheorem{assumption}{Assumption}[section]
\newtheorem{corollary}{Corollary}[section]
\newtheorem{proposition}{Proposition}[section]
\newcommand{\R}{\mathbb{R}}
\newcommand{\Z}{\mathbb{Z}}
\newcommand{\fl}[1]{\lfloor{#1}\rfloor} % floor of a number
\newcommand{\id}[1]{\mathds{1}_{\{{#1}\}}} % index function
\newcommand{\pr}[1]{\mathbb{P}\left\{#1\right\}}
\newcommand{\prob}{\mathbb{P}} % probability
\newcommand{\E}{\mathbb{E}}    % Expectation
\newcommand{\dto}{\Rightarrow} % weak convergence
\newcommand{\D}{\mathbf{D}} % space of cadlag functions
\newcommand{\C}{\mathbf{C}} % space of continuous functions
\newcommand{\fs}[1]{\bar{#1}^n} % fluid scaling
\newcommand{\ds}[1]{\tilde{#1}^n} % diffusion scaling
\newcommand{\hilit}[1]{\textcolor{red}{{#1}}}
\title{A Unified Approach to Diffusion Analysis of Queues with General Patience-time Distributions}
\author[*]{Junfei Huang}
\author[$\dagger$]{Hanqin Zhang}
\author[$\ddag$]{Jiheng Zhang}
\affil[*]{The Chinese University of Hong Kong}
\affil[$\dagger$]{National University of Singapore}
\affil[$\ddag$]{The Hong Kong University of Science and Technology}
\date{\today}
\begin{document}

\maketitle

\begin{abstract}
We propose a unified approach to establishing diffusion approximations for queues with impatient customers within a general framework of scaling customer patience time.
The approach consists of two steps.
The first step is to show that the diffusion-scaled abandonment process is asymptotically close to a function of the diffusion-scaled queue length process under appropriate conditions.
The second step is to construct a continuous mapping not only to characterize the system dynamics using the system primitives, but also to help verify the conditions needed in the first step.
The diffusion approximations can then be obtained by applying the continuous mapping theorem.
The approach has two advantages:
(i) it provides a unified procedure to establish the diffusion approximations regardless of the structure of the queueing model or the type of patience-time scaling;
(ii) and it makes the diffusion analysis of queues with customer abandonment essentially the same as the diffusion analysis of queues without customer abandonment.
% \insAE{Our approach can either significantly simplify the proofs for diffusion approximations already existing in the literature, or establish new diffusion approximations yet to be studied.}
We demonstrate the application of this approach via the single server system with Markov-modulated service speeds in the traditional heavy-traffic regime and the many-server system in the Halfin-Whitt regime and the non-degenerate slowdown regime.
\end{abstract}

\emph{Keywords:}
customer abandonment;
single-server queue; many-server queue;
heavy traffic; Halfin-Whitt regime; non-degenerate slowdown regime;
diffusion approximation.

\section{Introduction}
\label{sec:introduction}

% ---- Motivate our study ----
Motivated by its frequent occurrence in many service systems, customer abandonment has been extensively studied in various queueing models.
For example, outstanding orders may be canceled in manufacturing industries, data packets may be dropped if the waiting time in the transmission channel is too long, and customers may hang up at a call center after waiting for a while.
Abandonment is modeled by assuming each customer (order, data packet, etc.) has a patience time, which is a random variable.
A customer abandons the system once his waiting time exceeds his patience time.
The study of customer abandonment dates back to \cite{Palm1937}, who noticed the impatient behavior of telephone switchboard customers.
Many studies focus on the diffusion analysis of queueing processes as they often yield tractable and meaningful approximations.
%in heavy-traffic regimes where the arrival rate and service capacity increase indefinitely while staying approximately balanced.
%Asymptotic studies of such regimes are useful since they are close to many real-life service systems.
This paper aims to provide a unified approach to diffusion analysis with general patience-time distributions.

% ---- Literature on abandonment without scaling ----
In the literature, there are two main streams of studies on abandonment that differ by the patience-time scaling.
The first one keeps the patience-time distribution fixed in a heavy-traffic regime.  This stream can be further classified depending on the assumed heavy-traffic regime.
In the conventional heavy-traffic regime, \cite{WardGlynn2003a} identified the diffusion limit as a reflected Ornstein-Uhlenbeck process for the $M/M/1+M$ model.
Later, \cite{WardGlynn2005} extended the result to the general $G/GI/1+GI$ model.
In the Halfin-Whitt regime, \cite{GMR2002} obtained the diffusion limit for the $M/M/n+M$ model.
\cite{DaiHeTezcan2010} extended the diffusion analysis to a more general $G/PH/n+GI$ model by applying a general continuous map to both the fluid and diffusion-scaled processes and the random-time-change theorem.
\cite{MandelbaumMomcilovic2010} derived diffusion approximations for the $G/GI/n+GI$ queue building on the work on the $G/GI/n$ queue by \cite{Reed2009}.
In the non-degenerate slowdown regime (NDS), \cite{Atar2012NDS} established the diffusion approximation for the model with Poisson arrivals and exponential service and patience times.
Results of all the above studies share the common feature that only the density of the patience-time distribution at the  origin plays a role in the diffusion limit.

% ---- Literature on abandonment hazard rate scaling ----
Based on a statistical study of call center data, however, \cite{ZeltynMandelbaum2005} pointed out that the estimate of the hazard-rate function of patience times at a single point often turns out to be unstable.
To preserve more information about the patience-time distribution, another stream of the literature scales the patience-time distribution by the hazard rate, rather than by the density at a single point.
% More specifically, for the $n$th system and given the hazard-rate function
% \begin{equation*}
%   h^n(x)=h(\sqrt n x),
% \end{equation*}
% as introduced by \cite{ReedWard2008}.
% For a more general scaling, see \cite{LeeWeerasinghe2011}.
\cite{ReedWard2008} obtained the diffusion approximations for both the offered waiting-time process and the queue length process for the $G/GI/1+GI$ model in the conventional heavy-traffic regime.
Their approach was to use a non-linear generalized regulator mapping to establish weak convergence results.
Taking advantage of  the memoryless property of exponential distributions, recently, \cite{ReedTezcan2012} applied the same hazard-rate scaling to study the diffusion limit of the queue length process for the $G/M/n+GI$ model, which was extended by \cite{Weerasinghe2014} to allow a state-dependent service rate.
\cite{Katsuda15}, again by taking advantage of the memoryless property, extended the service time to be phase-type and allowed patience times to be more general.
The extensive numerical experiments of \cite{ReedTezcan2012} showed that the approximations involving the entire hazard-rate function outperformed those that relied only on the density at the origin when the density of the patience-time distribution changes rapidly near the origin.
\begin{table}[h]
  \centering
  \begin{tabular}{|l|l|l|}
    \hline
    & No Scaling & With Scaling \\
    \hline
    \multirow{3}{*}{Conventional} & \cite{WardGlynn2003a} $M/M/1+M$ &  \\
    & \cite{WardGlynn2005} $G/GI/1+GI$ & \cite{ReedWard2008} $G/GI/1+GI$ \\
    & \cite{LeeWeerasinghe2011} & \cite{LeeWeerasinghe2011} \\
    & {\hskip 115pt} $G/GI/1+GI$ & {\hskip 110pt} $G/GI/1+GI$\\
    \hline
    NDS & \cite{Atar2012NDS} $M/M/n^\alpha+M$ &  \\
    \hline
    \multirow{4}{*}{Halfin-Whitt} & \cite{GMR2002} $M/M/n+M$  & \cite{ReedTezcan2012} $G/M/n+GI$ \\
    & \cite{DaiHeTezcan2010} $G/PH/n+GI$ & \cite{Weerasinghe2014} $G/M/n+GI$  \\
    & \cite{MandelbaumMomcilovic2010} & \cite{Katsuda15} $G/PH/n+GI$ \\
    & {\hskip 112pt} $G/GI/n+GI$ & \\
    \hline
  \end{tabular}
  \caption{Diffusion Approximations  for Systems with Abandonment}
  \label{tab:summary}
\end{table}
Table~\ref{tab:summary} summarizes the existing studies on the diffusion analysis of queueing systems by classifying them into three heavy traffic regimes and two scalings of the patience-time distribution.
Readers are referred to \cite{Ward2012} for a comprehensive survey on the study of customer abandonment both without scaling and with hazard-rate scaling of patience times.

Based on the intuition developed by \cite{ReedWard2008}, recently \cite{DaiHe2013} proposed a neat approximation for the scaled abandonment process when the service time distribution is generalized from exponential to phase-type.
The approximation is expressed as an integral whose integrand is just the hazard rate function and the integral limit is given by the diffusion approximation for the number of customers in the system.
Numerically, they showed that their approximation is remarkably accurate.
{\it But one would hope to see a rigorous proof of their proposed approximation for phase-type service times}.
{\it Furthermore, it would be interesting to build the diffusion approximation for $G/GI/n+GI$ with hazard rate scaling of the patience-time distribution.}

% ---- The situation, and motivate our approach ----
From the methodological perspective, the above-mentioned works are about different models and set in different heavy-traffic regimes  (see Table~\ref{tab:summary}).
The analysis for single-server queues in the conventional heavy-traffic regime and that for many-server queues in the Halfin-Whitt regime and the NDS regime require different methods.
For example, \cite{WardGlynn2005} used the virtual waiting time for the single-server queues while \cite{MandelbaumMomcilovic2010} relied on the analysis of the queue length process for $G/GI/n+G$ queues in the Halfin-Whitt regime;
in contrast with these two papers, however \cite{Atar2012NDS} directly constructed a Poisson process to represent the abandonment process by taking advantage of the memoryless property of the exponential patience-time distribution for $M/M/n^{\alpha}+M$ in the NDS regime.
Moreover, for patience time with and without scaling, the methods are quite different even in the same regime.
In the Halfin-Whitt regime, for instance, when considering $G/GI/n+G$ without scaling, \cite{MandelbaumMomcilovic2010} constructed an auxiliary system with which to analyze the queue length process of the original system;
%In the NDS regime, \cite{Atar2012NDS} directly construct a Poisson process to represent the abandonment process by taking advantage of the memoryless property of the exponential patience-time distribution for $M/M/\sqrt n+M$ without scaling.
while considering $G/M/n+G$ with scaling, \cite{ReedTezcan2012} and \cite{Weerasinghe2014} directly proved the asymptotic equivalence between the queue length process and the virtual waiting-time process   to obtain the diffusion limit  of the queue length process.
%In the NDS regime, \cite{Atar2012NDS} directly construct a Poisson process to represent the abandonment process by taking advantage of the memoryless property of the exponential patience-time distribution for $M/M/\sqrt n+M$ without scaling.
%\insAE{Extending \cite{Atar2012NDS} to allow general patience time distribution with or without scaling appears to require an substantial effort.}
\emph{It would be nice to have a unified approach that applies across different regimes, and that can treat the patience time with or without scaling.}

% ==== Describing our approach ====
% ---- general framework for abandonment ----
Motivated by the above problems, our goal in this paper is to provide a uniform approach to the diffusion analysis of single-server queues and many-server queues with and without hazard-rate scaling.
% \delete{The framework to be used to characterize the hazard-rate scaling of the patience-time distribution $F^n(\cdot)$ for the $n$th system is }
% \begin{equation}
%  \label{eq:abandonment-scaling}
%  \text{\delete{$\sqrt n F^n\left(\frac{x}{\sqrt n}\right)\to f(x),
%  \quad \textrm{as }n\to\infty.$}}
% \end{equation}
%\delete{Equation \eqref{eq:abandonment-scaling} unifies the case of no scaling and the case of hazard-rate scaling. If there is no scaling, i.e., $F^n(x)=F(x)$ with $F(0)=0$ and $F'(0)=\alpha$, then \eqref{eq:abandonment-scaling} holds for $f(x)=\alpha x$. With hazard-rate scaling, the distribution function $F^n(x)=1-\exp(-\int_0^xh(\sqrt{n}s)ds)$, for a hazard-rate function $h(\cdot)$. In this case, \eqref{eq:abandonment-scaling} holds for $f(x)=\int_0^xh(s)ds$.}
%
The framework for modeling the patience-time distribution described in \eqref{eq:pat-distr-scale} can cover no-scaling, hazard-rate scaling and several other types of scalings,  which can potentially be used to analyze customer abandonment behaviors.
We focus on the unified approach in establishing the diffusion limits under the general scaling for the customer patience time \eqref{eq:pat-distr-scale}.
The approach has two steps.
The first step is to identify an asymptotic relationship between the customer abandonment process and the queue length process in Theorem~\ref{theo:abandoncouple} based on the general scaling \eqref{eq:pat-distr-scale} for the patience-time distribution.
When \eqref{eq:pat-distr-scale} is specialized to the case without scaling, our result reduces to that of \cite{DaiHe2010}.
Such an asymptotic relationship is established by using the patience-time distribution to connect the abandonment process to the virtual waiting time process, which can be approximated with the queue length process by proving a generalization of Little's law.
The challenge caused by the general scaling \eqref{eq:pat-distr-scale} is that the queue length processes are required to be tight, while only stochastic boundedness is
needed for the case without scaling as in \cite{DaiHe2010}.
Tightness, in particular the modulus of continuity \eqref{eq:modular-cont}, is usually difficult to verify.
To tackle this challenge,
% make the asymptotic relationship between the customer abandonment and the queue length processes applicable in the diffusion analysis,
we establish the tightness of the abandonment processes based only on the stochastic boundedness of the queue length processes, which forms a part of Theorem~\ref{theo:abandoncouple}.
Having tightness of the abandonment processes allows us to verify the tightness of
the queue length processes via the second step of our approach.
% ---- continuous mapping ----
The second step is to construct a mapping which would reveal a functional relationship between the system status (such as the queue length process) and the stochastic primitives (such as the arrival process, service and patience times).
% \insTwo{This is possible since the abandonment process can be represented using the queue length process (due to Theorem~\ref{theo:abandoncouple}).}
% \delete{The continuous mapping has some useful properties such as continuity in certain topology and thus can help verify the tightness of the queue length processes required by Theorem~\ref{theo:abandoncouple}.}
The mapping, with some nice properties, not only helps to verify the tightness of the queue length processes required by Theorem~\ref{theo:abandoncouple}, but also provides diffusion analysis by applying the continuous mapping theorem.
Within the unified framework described in the above two steps, to develop diffusion analysis for queueing systems with abandonment, it is enough to construct such continuous mappings and verify some mild assumptions.
Those assumptions can be verified in a same way as that for systems without abandonment.

% ---- Significance of our approach ----
% \delete{The unified approach enables us to obtain diffusion approximations for relatively general systems.}
We demonstrate how to use our approach to establish diffusion approximations via three examples, which are all new results in the literature.
In the first example (Section~\ref{sec:single-server}), we study the single service queue with Markov-modulated service speeds in the traditional heavy-traffic regime.
% \delete{For single-server queues, we allow the service speeds to be Markov-modulated.}
See \cite{MahabhashyamGautam2005} and \cite{Takine2005} for a wide range of applications of such models in telecommunications and web servers.
The classical single-server queue studied by \cite{WardGlynn2005} and \cite{ReedWard2008} can be viewed as a special case where the service speed is constant.
In the second example (Section~\ref{sec:Halfin-Whitt}), we establish the diffusion approximations for many-server queues in the Halfin-Whitt regime with general service times.
% \delete{For many-server queues, we establish the diffusion approximations in the Halfin-Whitt regime with general scaling \eqref{eq:pat-distr-scale} which applies to}
The special case of no scaling is the result in \cite{MandelbaumMomcilovic2010} and the special case with exponential service times and scaling is the result in \cite{ReedTezcan2012} and \cite{Weerasinghe2014}.
Moreover, the diffusion approximation established here justifies the approximation of the scaled abandonment processes proposed by \cite{DaiHe2013}.
In the third example (Section~\ref{sec:NDS}), we study the many-server queues in the NDS regime by extending the work of \cite{Atar2012NDS} to general patience-time distribution. %\insAE{Extending \cite{Atar2012NDS} to allow general patience time distribution with or without scaling appears to require an substantial effort.}
These three examples shows that the advantage of our unified approach is to simplify the diffusion analysis of queues with customer abandonment by making it essentially the same as the diffusion analysis of queues without customer abandonment.

% ---- Organization ----
The rest of this paper is organized as follows.
We introduce our unified approach in Section~\ref{sec:fram-cust-aband}, but postpone the proof to Section \ref{sec:outline-proof}.
Section~\ref{section:diffusion-analysis} demonstrates the application of the unified approach.
In particular, we consider three systems: the $G/GI/1$+$GI$ queue with Markov-modulated service speeds in the conventional heavy-traffic regime in Section~\ref{sec:single-server}, the $G/GI/n$+$GI$ queue in the Halfin-Whitt regime in Section~\ref{sec:Halfin-Whitt}, and the $G/M/n^\alpha$+$GI$ queue with $\alpha\in (0, 1)$ in the NDS regime in Section~\ref{sec:NDS}.
Several technical proofs are presented in the Appendix.

% ---- Notations ----
Before we conclude this section, we introduce some notation and definitions that are used throughout the paper.
All random variables and processes are defined on a common probability space $(\Omega, {\mathcal {F}}, \mathbb{P})$ unless otherwise specified.
We denote by $\Z_+$, $\R$ and $\R_+$ the sets of positive integers, real numbers and nonnegative numbers, respectively.
The space of RCLL (right continuous with left-hand limits) functions on $\R_+$ taking values in $\R$ is denoted by $\D(\R_+,\R)$, and the subspace of the continuous functions in $\D(\R_+,\R)$ is denoted by $\C(\R_+,\R)$.
The space $\D(\R_+,\R)$ is assumed to be endowed with the Skorohod $J_1$-topology (see \cite{Billingsley1999}).
For $g\in \D(\R_+,\R)$, $g(t-)$ represents its left limit at $t>0$, and the uniform norm of $g(\cdot)$ on the interval $[a, b]$ is defined as
\begin{equation*}
\|g\|_{[a,b]}=\sup_{t\in [a,b]}|g(t)| \ \
\mbox{with }
\|g\|_{[0,b]} \textrm{ abbreviated to }\|g\|_b.
\end{equation*}
For a sequence of random elements $\{X^n, n\in \Z_+\}$ taking values in a metric space, we write $X^n \Rightarrow X$ to denote the convergence of $X^n$ to $X$ in distribution.
$X\stackrel{d}{=}Y$ means that random elements $X$ and $Y$ have the same distribution.
% For a probability distribution function $G(\cdot)$ defined on $\R_+$, $G^c(\cdot)=1-G(\cdot)$.
For $a\in \R$, $a^+ = \max\{a, 0\}$, $a^-=\max\{-a,0\}$, and $\lfloor a \rfloor$ is the largest integer not greater than $a$.
We use $\mathds{1}_{A}$ to denote the indicator function of set $A \subset \Omega$.

\section{Model and Asymptotic Framework}
\label{sec:fram-cust-aband}

Consider a sequence of first-come first-served (FCFS) $G/GI/N_n+GI$ queues indexed by $n\in\Z_+$, where $N_n$ is deterministic and represents the number of servers in the $n$th system.
Denote by $Q^n(t)$ the number of customers in the queue at time $t$, by $X^n(t)$ the total number of customers in the system at time $t$, and by $G^n(t)$ the number of customers who have abandoned the queue by time $t$, in the $n$th system.
In this paper, we assume, for technical convenience, that the patience times of the customers who are initially in the system are infinite, i.e., the initial customers in the queue are infinitely patient (this assumption is not restrictive; see \cite{MandelbaumMomcilovic2010} for the study on the many-server queue).
Clearly, $G^n(0)=0$ and $Q^n(0)$ is the number of customers waiting in the queue at time zero. Define the diffusion-scaled processes
$\tilde{Q}^n=\{\tilde{Q}^n(t): t\geq 0\}$,
$\tilde{X}^n=\{\tilde{X}^n(t): t\geq 0\}$, and
$\tilde{G}^n=\{\tilde{G}^n(t): t\geq 0\}$ as
\begin{equation}
  \label{eq:diffusion-def-gen}
  \tilde{Q}^n(t)=\frac{Q^n(t)}{\sqrt{n}},\quad
  \tilde{X}^n(t)=\frac{X^n(t)-N_n}{\sqrt{n}},\quad
  \tilde{G}^n(t)=\frac{G^n(t)}{\sqrt{n}}.
\end{equation}
Our objective in this section is to prove an  asymptotic relationship (Theorem~\ref{theo:abandoncouple}) between $\tilde{Q}^n$ and $\tilde{G}^n$ under appropriate assumptions.

Let $E^n(t)$ denote the number of arrivals by time $t$ in the $n$th system, and define the diffusion-scaled arrival process $\tilde{E}^n=\{\tilde{E}^n(t): t\geq 0\}$ by
\begin{equation*}
  \tilde{E}^n(t)=\frac{E^n(t)-\lambda^n t}{\sqrt{n}},
\end{equation*}
where $\lambda^n$ is called customer arrival rate for the $n$th system and satisfies
\begin{equation}
  \label{eq:lambda-limit}
  \lim_{n\rightarrow \infty} \frac{\lambda^n}{n}=\mu>0.
\end{equation}
We assume that
\begin{equation}\label{eq:arrival-diffusion}
\tilde{E}^n  \dto\tilde E
  \quad\textrm{as }n\to\infty,
\end{equation}
for some process $\tilde{E}=\{\tilde{E}(t): t\geq 0\}\in\C(\R_+, \R)$.
Here $\mu$ in \eqref{eq:lambda-limit} is usually related to customer service times.
The customer service times (characterized by customer service requirements and system service speed to process  the requirements) will be specified when a concrete system is investigated.
Let $\gamma^n_i$ be the patience time of the $i$th arriving customer in the $n$th system.
A customer waiting in the system will leave without receiving service once his patience time is exhausted.
$\{\gamma^n_i, i\in\Z_+\}$ is assumed to be a sequence of i.i.d. random variables, and independent of the arrival process $E^n$ for each $n$.
%\delete{In this paper, we assume, for technical convenience, that the patience times of the customers who are initially in the system are infinite, i.e., the initial customers in the queue are infinitely patient (this assumption is not restrictive; see Mandelbaum and  Momcilovic (2010)).}
We denote the patience-time distribution by $F^n(\cdot)$ and assume that for each $x\geq 0$,
\begin{equation}\label{eq:pat-distr-scale}
  \sqrt{n}F^n(\frac{x}{\sqrt{n}})\rightarrow f(x),\quad \hbox{as}\ n\rightarrow\infty,
\end{equation}
where $f(\cdot)$ is nondecreasing.
We assume that $f(\cdot)$ is locally Lipschitz continuous function, i.e., for any $T\geq 0$, there is a constant $\Lambda_T$ such that for all $x, y\in [0, T]$,
\begin{equation}
  \label{eq:Lipschitz}
  |f(x)-f(y)|\leq \Lambda_{T} |x-y|.
\end{equation}
As pointed out in the introduction, not only can this framework cover the two well-known ways of scaling patience-time distributions, namely, no scaling and hazard-rate scaling, but also provides some new types of scalings:
\begin{itemize}
\item \emph{No scaling.}
  Let $F^n(x)=F(x)$ for $x\ge 0$, where $F(\cdot)$ is a probability distribution function with $F(0)=0$ and $F'(0+)=\alpha$.
  In this case, $f(x)=\alpha x$ for $x\ge 0$.
\item \emph{Hazard-rate scaling.}
  Let $F^n(x)=1-\exp(-\int_0^xh(\sqrt{n}s)ds)$ for $x\ge 0$, for some locally Lipschitz continuous hazard-rate function $h(\cdot)$.
  In this case, $f(x)=\int_0^xh(s)ds$ for $x\ge 0$.
\item \emph{Mixture of hazard-rate scaling and no scaling.}
  For any give $p\in(0,1)$, let $F(\cdot)$ be a distribution function and $h(\cdot)$ be a locally Lipschitz continuous hazard-rate function.
  Let $F^n(x)=pF(x)+(1-p)\big[1-\exp(- \int_0^{x}h(\sqrt{n}s)ds)\big]$, $x\ge 0$.
  In this case, $f(x)=pF'(0+)x+(1-p)\int_0^xh(s)ds$, $x\ge 0$.
\item \emph{Delayed hazard-rate scaling}.
  Let $h_1(\cdot)$ and $h_2(\cdot)$ be two locally Lipschitz continuous hazard-rate functions, and let
  \begin{eqnarray*}
  F^n(x)=\left\{
  \begin{array}{ll}
  1-\exp\Big(-\int^x_0 h_1(s)ds\Big), &\mbox{if $x\in [0, \frac{x_0}{\sqrt n}]$},\\[0.1in]
  1-\exp\Big(-\int_0^{x_0/\sqrt n} h_1(s)ds-\int_{x_0/\sqrt n}^x h_2(\sqrt n s)ds\Big), &\mbox{if $x\in (\frac{x_0}{\sqrt n}, \infty)$},
  \end{array}
  \right.
  \end{eqnarray*}
  where $x_0$ is a positive constant, is usually called delayed time point.
  % Let $h^n(x)=h_1(x)$ for $0\le x\leq {x_0}/\sqrt{n}$, and $h^n(x)=h_2(\sqrt{n}x)$ for $x>{x_0}/\sqrt{n}$.
  Then
  \begin{equation*}
    f(x)=\left\{
      \begin{array}{ll}
        h_1(0)x,\ &\mbox{if $0\le x\leq x_0$};\\
        h_1(0)x_0+\int_{x_0}^xh_2(s)ds,\ &\ \mbox{if $x> x_0$}.
    \end{array}
  \right.
\end{equation*}
\end{itemize}
%The customer service times will be specified later.
In order to obtain the asymptotic relationship (Theorem~\ref{theo:abandoncouple}), the key assumption is that the sequence of diffusion-scaled queue length processes  $\{\tilde Q^n,n\in\Z_+\}$ is $C$-tight.
That is, on any finite interval $[0, T]$, the sequence is stochastically bounded, i.e.,
\begin{equation}
  \label{eq:stoc-bdd}
  \lim_{\Gamma\rightarrow \infty} \limsup_{n\rightarrow \infty} \ \mathbb{P}\bigg\{ \sup_{0\leq t\leq T} \tilde Q^n(t) >\Gamma\bigg\}=0,
\end{equation}
and the modulus of continuity is asymptotically small, i.e., for any
$\varepsilon>0$,
\begin{equation}
  \label{eq:modular-cont}
  \lim_{\delta\rightarrow 0} \limsup_{n\rightarrow \infty}
  \prob\Big\{
  \sup_{{s,t\in [0,T]},{|s-t|<\delta}}
  |\tilde Q^n(s)-\tilde Q^n(t)|>\varepsilon
  \Big\}=0.
\end{equation}

\begin{theorem}\label{theo:abandoncouple}
If a sequence of $G/GI/N_n+GI$ queues satisfies
\eqref{eq:lambda-limit}--\eqref{eq:pat-distr-scale}
  and \eqref{eq:stoc-bdd}, then the sequence $\{\ds G, n\in \Z_+\}$ is
$C$-tight. Moreover, when \eqref{eq:Lipschitz} and
 \eqref{eq:modular-cont} also hold, we have that for each $T>0$,
  \begin{equation}
    \label{equation:abandoncouple}
    \sup_{0\leq t\leq T}\left|
      \tilde{G}^n(t)-\mu\int_0^tf(\frac{1}{\mu}\tilde{Q}^n(s)) ds
      \right|
      \Rightarrow 0,\quad \textrm{as}\ n\rightarrow\infty.
  \end{equation}
\end{theorem}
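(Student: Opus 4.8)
\emph{Proof strategy.} The plan is to route the comparison through the offered (virtual) waiting time process $V^n(t)$, defined as the delay that a hypothetical, infinitely patient customer arriving at time $t$ would incur, and to pass from the abandonment count to the queue length in two stages, abandonment $\to$ waiting time $\to$ queue length; the patience scaling \eqref{eq:pat-distr-scale} then converts the waiting-time functional into the claimed integral of $\tilde Q^n$. The three-step procedure outlined before the statement corresponds naturally to (i) stochastic boundedness of the scaled waiting time, (ii) an abandonment-to-waiting-time law of large numbers, and (iii) a generalized Little's law.

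The first and main step rests on the FCFS identity that a customer arriving at time $s$ abandons exactly when their patience falls short of $V^n(s)$, so that, conditionally on the arrival process and on $V^n$, the instantaneous abandonment rate at $s$ is $\lambda^n F^n(V^n(s))$ (this already folds in the arrival law of large numbers that replaces the counting measure $dE^n(s)$ by $\lambda^n\,ds$). Writing $G^n$ as a sum of patience-time indicators over arriving customers and subtracting this conditional mean, I would establish
\begin{equation*}
  \frac1{\sqrt n}\sup_{0\le t\le T}\Big| G^n(t)-\int_0^t \lambda^n F^n(V^n(s))\,ds\Big|\Rightarrow 0.
\end{equation*}
This is a functional law of large numbers for the i.i.d.\ sequence $\{\gamma^n_i\}$: partition $[0,T]$ into subintervals of width $\delta$, freeze $V^n$ at the left endpoint of each subinterval, and bound the resulting martingale fluctuations of the indicator sums via a Doob/maximal inequality, using the independence of $\{\gamma^n_i\}$ from $E^n$.

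The second and third steps are comparatively routine. The generalized Little's law comes from observing that the customers in queue at time $t$ are essentially the arrivals over the look-back window $[\,t-V^n(t),\,t\,]$ that have not abandoned, which yields $Q^n(t)\approx \lambda^n V^n(t)$, i.e.\ $V^n(s)\approx \tilde Q^n(s)/(\sqrt n\,\mu)$ in the diffusion scaling; stochastic boundedness of the scaled waiting time is inherited from \eqref{eq:stoc-bdd} together with the arrival FCLT \eqref{eq:arrival-diffusion}. Substituting this into $\lambda^n F^n(V^n(s))$ and invoking $\sqrt n F^n(x/\sqrt n)\to f(x)$ together with $\lambda^n/n\to\mu$ turns the integrand into $\mu f(\tfrac1\mu\tilde Q^n(s))$, which is the target.

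The hard part will be the freezing error in the first step under the general scaling. When $f$ is nonlinear, replacing $V^n(s)$ by its frozen value over a subinterval produces an error controlled by the oscillation of $V^n$ --- hence of $\tilde Q^n$ --- over that subinterval, weighted by the Lipschitz constant $\Lambda_T$ from \eqref{eq:Lipschitz}. Making this uniform in $t\in[0,T]$ is exactly what forces the full $C$-tightness hypothesis rather than mere stochastic boundedness: the modulus-of-continuity bound \eqref{eq:modular-cont} is what permits sending $\delta\to0$ after $n\to\infty$ and closing the estimate. Interleaving the freezing error, the Lipschitz bound on $f$, and the martingale fluctuation bound while keeping every estimate uniform on $[0,T]$ is the delicate core of the argument.
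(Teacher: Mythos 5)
Your proposal follows essentially the same route as the paper's own proof: the paper also passes through the virtual waiting time via three propositions that match your (i)--(iii) exactly (stochastic boundedness of $\tilde\omega^n$, the relation $\sup_{t\le T}|\tilde G^n(t)-\frac{1}{\sqrt n}\sum_{j\le E^n(t)}F^n(\omega^n_j)|\Rightarrow 0$, and the generalized Little's law $\sup_{t\le T}|\mu\tilde\omega^n(t)-\tilde Q^n(t)|\Rightarrow 0$), and then converts the waiting-time functional into $\mu\int_0^t f(\tfrac1\mu\tilde Q^n(s))\,ds$ using \eqref{eq:pat-distr-scale}. The only differences are implementation details --- the paper replaces $dE^n$ by $\mu\,ds$ via a monotonicity sandwich, Skorohod representation and a random-measure convergence lemma rather than your freeze-and-Doob partition, handles the ``abandoned by $t$ versus destined to abandon'' timing discrepancy with an auxiliary process $\zeta^n(t)$, and uses the modulus-of-continuity hypothesis inside Little's law as well as for the $C$-tightness of $\tilde\omega^n$ --- so your plan is a faithful blueprint of the published argument.
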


\begin{remark}\label{rem-bound}
  Note that $C$-tightness of $\{\ds G, n\in \Z_+\}$ implies that the fluid scaled process $({1}/{n})G^n(\cdot)$ converges to zero in probability, which is the fluid limit result for the abandonment process.
  % In fact, the fluid limit result does not need \eqref{eq:modular-cont}, as it can be derived from Propositions~\ref{prop:waiting-time-stoch-bound} and \ref{prop:abandon-patience-relation}.
  Due to this result, the abandonment process is negligible in fluid scaling, hence analyzing the fluid limit of the system with abandonment is essentially the same as analyzing the fluid limit of the system without abandonment.
\end{remark}

Theorem~\ref{theo:abandoncouple} does not need any condition on the service process as long as the queue length processes satisfy {\rm \eqref{eq:stoc-bdd}--\eqref{eq:modular-cont}}.
Whether the patience times have hazard-rate scaling or no scaling,   the theorem yields the following result.

\begin{corollary}\label{corollary:abandon-limit}
  Assume that the sequence of $G/GI/N_n+GI$ queues satisfies \eqref{eq:lambda-limit}--\eqref{eq:modular-cont}.
  {\rm (i)} If the patience-time distribution has a hazard-rate scaling, namely, $F^n(x)=1-\exp(-\int_0^xh(\sqrt{n}s)ds)$ for some locally bounded hazard-rate function $h(\cdot)$, then
  \begin{equation}
    \label{equation:abandoncouple-limit-1}
    \sup_{0\leq t\leq T}\Big|
      \tilde{G}^n(t)- \int^t_0 \int^{\tilde Q^n(s)}_0 h\Big(\frac{u}{\mu}\Big) du ds
      \Big|
      \Rightarrow 0,\quad \textrm{as}\ n\rightarrow\infty;
  \end{equation}
  {\rm (ii)} If the patience-time distribution has no scaling, that is, $F^n(x)=F(x)$ with derivative $\alpha=F'(0+)$, then
  \begin{equation}
    \label{equation:abandoncouple-limit-2}
    \sup_{0\leq t\leq T}\Big|
      \tilde{G}^n(t)- \alpha\int^t_0 \tilde Q^n(s) ds
      \Big|
      \Rightarrow 0,\quad \textrm{as}\ n\rightarrow\infty.
  \end{equation}
\end{corollary}

Corollary \ref{corollary:abandon-limit} (ii) is the same as Theorem 2.1 of \cite{DaiHe2010}  who obtained such asymptotic relationship when the patience time is not scaled and only \eqref{eq:lambda-limit}--\eqref{eq:stoc-bdd} hold.
However, due to the general scaling \eqref{eq:pat-distr-scale} for patience-time distributions, we need the additional condition \eqref{eq:modular-cont} to deal with the nonlinearity of the function $f(\cdot)$.

% Theorem \ref{theo:abandoncouple} establishes an asymptotic
% relationship between the queue length and abandonment processes as
% the first step of our unified approach. The result is independent of
% queueing models, and only requires assumptions on the arrival
% processes \eqref{eq:lambda-limit}--\eqref{eq:arrival-diffusion},
% patience-time distributions
% \eqref{eq:pat-distr-scale}--\eqref{eq:Lipschitz}, and the queue
% length processes \eqref{eq:stoc-bdd}--\eqref{eq:modular-cont}. Note
% that conditions \eqref{eq:lambda-limit}--\eqref{eq:Lipschitz} are
% standard for the system parameters. The applicability of
% Theorem~\ref{theo:abandoncouple} often depends on whether the
% conditions \eqref{eq:stoc-bdd}--\eqref{eq:modular-cont} on the queue
% length can be verified or not. In particular, verifying that the
% modulus of continuity of the queue length process asymptotically
% vanishes \eqref{eq:modular-cont}, which is required due to the
% general scaling of the patience-time distribution, could be
% challenging.  Next we provide a continuous mapping technique as the
% second step of our unified approach to deal with the challenge
% associated with the verification of condition
% \eqref{eq:modular-cont} on the queue length processes. This,
% consequently, leads to the diffusion approximations by the
% continuous mapping theorem.  We apply our approach to the
% $G/GI/1+GI$ queue with Markov-modulated service speeds and
% $G/GI/n+GI$ queue in Sections~\ref{sec:single-server} and
% \ref{sec:Halfin-Whitt}, respectively.

The independence of specific queueing models for Theorem~\ref{theo:abandoncouple} enables us to develop a unified approach to diffusion analysis.
Note that among conditions required by Theorem~\ref{theo:abandoncouple}, \eqref{eq:lambda-limit}--\eqref{eq:Lipschitz} are
standard for the system parameters.
The applicability of Theorem~\ref{theo:abandoncouple} often depends on the verification of conditions \eqref{eq:stoc-bdd}--\eqref{eq:modular-cont}, in particular \eqref{eq:modular-cont}, which is often a major difficulty in most queueing analysis.
So we now provide a continuous mapping technique as the second step of our unified approach to overcome the difficulty associated with the verification of condition \eqref{eq:modular-cont} on the queue length processes.
This, consequently, leads to the diffusion approximations by the continuous mapping theorem.

To establish \eqref{eq:modular-cont} on the modulus of continuity for $\{\ds Q, n\in \Z_+\}$, in view of $\ds Q=(\ds X)^+$, it is sufficient to consider the modulus of continuity for $\{\ds X, n\in \Z_+\}$.
To this end, in view of Theorem \ref{theo:abandoncouple}, we define the centered abandonment process $\tilde G_c^n=\{\tilde G_c^n(t): t\geq 0\}$ as
\begin{equation}
  \label{eq:G-hat}
  \tilde G_c^n(t) = \ds G(t) -
  \mu\int_0^t f\Big(\frac{1}{\mu}(\ds X(s))^+\Big) ds.
\end{equation}
Suppose there exists a sequence of processes $\ds Y=\{\ds Y(t):t\geq 0\}$ and a mapping  $\Phi:\D(\R_+,\R)\to \D(\R_+,\R)$ such that
\begin{equation}
  \label{eq:reflection}
  \ds X = \Phi(\ds Y-\tilde G_c^n).
\end{equation}
Roughly speaking, $\ds Y$ is the centered diffusion scaled process of the system primitives.
Its exact form depends on the specific queueing system under examination.
See \eqref{eq:Y-single-mms}, \eqref{eq:Y-HW} and \eqref{2012-Aug-6-3} for the expressions of $\ds Y$ in the three concrete models studied in Section~\ref{section:diffusion-analysis}.
The following result characterizes the asymptotic behavior of $\tilde{X}^n$.

\begin{theorem}\label{thm:unified-reflection}
  Assume that
  {\rm(i)} condition \eqref{eq:stoc-bdd} on $\{\ds Q, n\in \Z_+\}$ holds$;$
  {\rm (ii)} there exists $\tilde Y\in \C(\R_+,\R)$ such that $\ds Y\dto \tilde Y$ as $n\to\infty$$;$
  {\rm (iii)} the mapping $\Phi(\cdot)$ is Lipschitz continuous in the topology of uniform convergence over bounded intervals, measurable with respect to the Borel $\sigma$-field generated by the Skorohod $J_1$-topology, and $\Phi(\C(\R_+,\R))\subseteq \C(\R_+,\R)$.
  Then
  \begin{equation}
    \label{eq:X-conv}
    \tilde{X}^n\Rightarrow \tilde{X}=\Phi(\tilde Y) \ \textrm{ as }\ n\to\infty.
  \end{equation}
\end{theorem}

The proofs of Theorems \ref{theo:abandoncouple}--\ref{thm:unified-reflection}, and Corollary
\ref{corollary:abandon-limit} are postponed to Section~\ref{sec:outline-proof}.
In their proofs, we can see that Theorem~\ref{theo:abandoncouple} is  a key step to prove Theorem~\ref{thm:unified-reflection}.
Theorem~\ref{thm:unified-reflection} outlines our unified approach in more detail.
We first obtain the stochastic boundedness for $\{\ds Q, n\in \Z_+\}$ by a comparison with the systems without customer abandonment.
Then we construct the continuous mapping $\Phi(\cdot)$.
The principle of the construction of $\Phi(\cdot)$ is to make the weak convergence of $\{\ds Y, n\in \Z_+\}$ to be tractable, which can usually be established by going along the same way as in the systems without abandonment.
Hence the approach developed here makes the diffusion analysis of queues with customer abandonment to be essentially same as the diffusion analysis of queues without customer abandonment.

Next we apply this approach to the diffusion analysis for the $G/GI/N_n+GI$ queue.

\section{Diffusion Analysis for $G/GI/N_n+GI$}% via Continuous Mappings}
\label{section:diffusion-analysis}

The setup for the sequence of the $G/GI/N_n+GI$ systems is as follows.
For the $n$th system, let $v^n_i$, $i=1, 2, \ldots$ be the service requirement of the $i$th customer who arrives at the system after time 0 and will not abandon.
For $i=-X^n(0)+1, \cdots, -Q^n(0)$, $v^n_i$ denotes the remaining service requirement of the $i$th customer initially in service.
% The way the initial customers in service are indexed does not matter.
For $i=-Q^n(0)+1,\cdots, 0$, $v^n_i$ denotes the service requirement of the $i$th customer initially waiting in the queue.
Customer $-Q^n(0)+1$ is the first in the queue, customer $-Q^n(0)+2$ is the second, and so on.
We assume $\{v^n_i,i\geq -X^n(0)+1\}$ is a sequence of independent random variables, and is independent of the patience times $\{\gamma^n_i,i\in\Z_+\}$ and the arrival process $E^n$ given in Section \ref{sec:fram-cust-aband} for each $n$.
We assume the convergence of initial states,
\begin{equation}
  \label{eq:X-0-diffusion}
  \tilde{X}^n(0)\Rightarrow \xi \quad \textrm{as }n\to\infty,
\end{equation}
for some random variable $\xi$. We also assume the following \emph{heavy-traffic condition},
\begin{equation}
  \label{eq:HT-QED}
  \beta^n:=\sqrt{n}(\frac{\lambda^n}{n\mu}-1)
  \to \beta \quad \textrm{as }n\to\infty,
\end{equation}
for some $\beta\in\mathbb{R}$.
In particular, the heavy-traffic condition implies \eqref{eq:lambda-limit}.
Our diffusion approximation results will be established in the heavy-traffic regime specified by \eqref{eq:HT-QED} with assumption \eqref{eq:arrival-diffusion} on the arrival processes, assumptions \eqref{eq:pat-distr-scale}--\eqref{eq:Lipschitz} on the patience-time distribution, and the initial condition \eqref{eq:X-0-diffusion}.
The relationship between $\mu$ in the heavy-traffic condition \eqref{eq:HT-QED} and the means  of the customer service requirements $\{v^n_i,i\geq -X^n(0)+1\}$  will be characterized through the system service speed of processing the service requirements in the concrete models, see Assumptions~\ref{single}--\ref{assump:Atar}.

\subsection{$G/GI/1+GI$ in the Traditional Heavy-Traffic Regime}
\label{sec:single-server}

In this section, we study a sequence of single-server queues in the
traditional heavy-traffic regime. We adopt a general model to allow
the service speed in the $n$th system to be modulated by a
continuous-time Markov chain $\Delta^n=\{\Delta^n(t): t\geq 0\}$
with a finite state space ${\cal S}=\{1,\cdots, \ell\}$. At time
$t$, the server will process customer service requirements at speed
$n\mu_i$ when $\Delta^n(t)=i\in {\cal S}$. This is  a general model
as the classical single-server queue is a special case where the
state space has only a single state, i.e., the service speed is
constant. The following setup for the model is standard (see
\cite{DVZ2013}).

\begin{assumption}\label{single}
  The customer service requirements $\{v^n_i, i\geq -X^n(0)+1\}$ are independent and identically distributed with mean $1$ and variance $\theta^2$.
  The Markov chain $\{\Delta^n(t): t\geq 0\}$ is given by $\Delta^n(t)=\Delta(nt)$ for $t\geq 0$, where $\Delta=\{\Delta(t): t\geq 0\}$ is an irreducible and stationary continuous-time Markov chain with state space ${\cal S}$, generator ${\cal G}$ $((\ell\times\ell)$-matrix$)$, and stationary distribution $\pi:=(\pi_1,\cdots,\pi_\ell)$.
\end{assumption}

We assume $\mu$ in \eqref{eq:lambda-limit} is equal to $\sum_{i\in
{\cal S}}\pi_i\mu_i$, which can be considered as the long-run
average speed at which the server processes service requests. The
following preliminary result on continuous Markov chains will be
needed in establishing the diffusion approximation for the
$G/GI/1+GI$ with Markov-modulated service speeds. For its proof, see
\cite{YinZhang2013}.
\begin{lemma}\label{lem:Mark}
  Under Assumption {\rm\ref{single}}, for any $T\geq 0$, as $n\to\infty$,
  \begin{eqnarray*}
    \sup_{0\leq t\leq T}\Big|\int_0^t\mu_{\Delta^n(s)} ds- \sum_{i\in {\cal S}} \mu_i\pi_i t
    \Big| \Rightarrow 0  \ \ \mbox{and} \ \ \tilde \Delta^n \Rightarrow \tilde \Delta,
  \end{eqnarray*}
  where $\tilde \Delta^n=\{\tilde \Delta^n(t): t\geq 0\}$ given by
  \begin{eqnarray*}
    \tilde \Delta^n(t)= \sqrt n \Big(
    \int_0^t\mu_{\Delta^n(s)} ds-
    \sum_{i\in {\cal S}} \mu_i\pi_i t\Big) ,
  \end{eqnarray*}
  and $\tilde{\Delta}=\{\tilde \Delta(t): t\geq 0\}$ is a Brownian motion with zero drift, and variance $\theta^2_{\cal S}$ given by
  \[
  \theta_{\cal S}^2=\sum_{i,j\in {\cal S}}\mu_i\mu_j \Big(\pi_i\int^\infty_0 \varphi_{ij}(s)ds+\pi_j\int^\infty_0
  \varphi_{ji}(s)ds\Big)
  \]
  with $(\varphi_{ij}(s))_{\ell\times \ell}=(I-(1,\cdots, 1)^\prime \cdot \pi)\cdot \exp({\cal G}s)$, where $I$ is an $\ell$-by-$\ell$ identity matrix.
\end{lemma}

The total amount of customer service requests processed by the
server during $[0, t)$ is $\int_0^t n\mu_{\Delta^n(s)}\cdot
(X^n(s)\wedge 1) ds$.
Define
\begin{equation*}
  S^n(t)=\max\left\{k: v^n_{-X^n(0)+1}+\cdots + v^n_{-X^n(0)+k}\leq t\right \}
\end{equation*}
as the renewal process associated with the sequence of service requirements. Then the number of customers served by time $t$ is
\begin{equation*}
  S^n\Big(n\int_0^t \mu_{\Delta^n(s)}\cdot ( X^n(s)\wedge 1) ds\Big).
\end{equation*}
The evolution of the process $X^n$ can be characterized by the
system dynamics equation
\begin{equation}
  \label{equ:sysdyn}
  X^n(t)=X^n(0)+E^n(t)-S^n\Big(n\int_0^t\mu_{\Delta^n(s)}\cdot ( X^n(s)\wedge 1) ds\Big)-G^n(t).
\end{equation}
In view of \eqref{eq:diffusion-def-gen} and \eqref{eq:G-hat}, we rewrite
\eqref{equ:sysdyn} as
\begin{equation}
  \label{eqn:diffdyn}
  \begin{split}
    &\tilde X^n(t)  =
    \tilde Y^n(t)-\tilde{G}^n_c(t)
    -\mu\int_0^t f\Big(\frac{1}{\mu}(\tilde X^n(s))^+\Big)ds
    +n\int_0^t\mu_{\Delta^n(s)}(\tilde X^n(s))^- ds,
  \end{split}
\end{equation}
where
\begin{eqnarray}
   % \begin{split}
  \tilde Y^n(t)&= & \tilde X^n(0)%+\frac{(X^n(t)-1)^-}{\sqrt n}+
   +\tilde E^n(t)-\tilde S^n\Big(\int_0^t\mu_{\Delta^n(s)}\cdot (X^n(s)\wedge 1) ds\Big)
   %\left(S_m\Big(n\int_0^t\mu_{\Delta^n(s)}\cdot ( X^n(s)\wedge 1) ds\Big)-\int_0^tn\mu_{\Delta^n(s)}\cdot (X^n(s)\wedge 1) ds\right)\\
 +\mu\sqrt n\Big(\frac{\lambda^n}{n\mu} -1\Big) t- \tilde \Delta^n(t),\label{eq:Y-single-mms}\\
%   \sqrt n \Big(\mu t-\int_0^t\mu_{\Delta^n(s)} ds\Big),\\
  % \tilde S^n(t)&=&\tilde S^n\Big(\int_0^t\mu_{\Delta^n(s)}\cdot (X^n(s)\wedge 1) ds\Big), \ \
  \tilde S^n(t)&=&\frac{S^n(nt)-nt}{\sqrt n}.\nonumber
%  \end{split}
\end{eqnarray}
In order to use Theorem~\ref{thm:unified-reflection}, we first
establish the following lemma.

\begin{lemma}\label{lem:mapping-abd-single}
Assume that  $g(\cdot)$ is a locally Lipschitz continuous function
defined on $\R_+$ with $g(0)=0$.  For any $y(\cdot)\in \D(\R_+,\R)$,
there exists a unique solution $(x(\cdot), z(\cdot))$ to the
following set of equations
\begin{eqnarray}
  &&x(t)= y(t)+\int_0^tg((x(s))^+)ds +z (t),\label{2013-Nov-1}\\
  && x(t)\geq 0,\nonumber\\
  && z(\cdot) \mbox{ is nondecreasing and }z(0)=0,\nonumber\\
  && \int_0^{\infty}x(s)dz(s)=0.\nonumber
\end{eqnarray}
Moreover, the mapping $\Phi_{g}(\cdot):\D(\R_+,\R)\rightarrow
\D(\R_+,\R)$ defined by $x=\Phi_{g}(y)$ is Lipschitz continuous in
the topology of uniform convergence over bounded intervals,
measurable with respect to the Borel $\sigma$-field generated by the
Skorohod $J_1$-topology, and $\Phi_{g}(\C(\R_+,\R))\subseteq
\C(\R_+,\R)$.
\end{lemma}

It is worth pointing out that the mapping is continuous in the Skorohod $J_1$-topology according to Proposition~4.9 of \cite{LeeWeerasinghe2011}.
Their proof is based on the earlier work of \cite{ReedWard2008} and some classical results of the Skorohod $J_1$-topology.
In Appendix~\ref{mapping}, we provide a simple and direct way to show the Lipschitz continuity under the uniform topology and demonstrate that this is sufficient for our reflection mapping approach.

\begin{theorem}
  \label{theorem:diffusion-single}
  Assume that conditions~\eqref{eq:arrival-diffusion}--\eqref{eq:Lipschitz}, and \eqref{eq:X-0-diffusion}--\eqref{eq:HT-QED} hold.
  For the stochastic processes $\{\tilde X^n, n\in \Z_+\}$ associated with the sequence of $G/GI/1+GI$ systems, if Assumption~{\rm \ref{single}} holds, then $\tilde{X}^n\Rightarrow \tilde{X}$ as $n\to\infty$.
  Here $\tilde{X}=\{\tilde X(t): t\geq 0\}$ is given by $\tilde{X}=\Phi_g(\tilde Y)$ with
  \begin{align*}
    g(t) &=-\mu f\Big(\frac{1}{\mu}t \Big),  \ t\geq 0;\\
    \tilde Y &=\{\tilde Y(t): t\geq 0\}, \
    \tilde Y(t)=\xi +\tilde{E}(t) - \sqrt{\mu}\theta\tilde{S}(t) - \tilde \Delta(t)+\beta\mu t,
  \end{align*}
  % the solution to the following set of equations
  % \begin{align}
  %   \label{eq:thm-limit-map}
  %   &\tilde{Q}(t)=\xi +\tilde{E}(t) - \sqrt{\mu}\theta\tilde{S}(t) - \tilde \Delta(t) + \beta\mu t
  %   -\mu\int_0^tf\big(\frac{1}{\mu}\tilde{Q}(s)\big)ds
  %   +\tilde{L}(t),\\
  %   \label{eq:thm-limit-positive}
  %   &\tilde{Q}(t)\geq 0,\\
  %   \label{eq:thm-limit-mono}
  %   &\tilde{L}(\cdot) \mbox{ is nondecreasing and }\tilde{L}(0)=0,\\
  %   \label{eq:thm-limit-complementary}
  %   &\int_0^{\infty}\tilde{Q}(s)d\tilde{L}(s)=0,
  % \end{align}
  where $\tilde S=\{\tilde{S}(t): t\geq 0\}$ is a standard Brownian motion independent of $\xi$, $\tilde{E}$ and $\tilde{\Delta}$.
  Moreover, $\ds Q \dto \tilde X$ as $n\to\infty$.
\end{theorem}

\begin{proof}
  By (\ref{eqn:diffdyn}), we have
  \begin{equation}
    \label{eqn:diffdyn-1}
    \begin{split}
      &(\tilde X^n(t))^+  =
      \tilde Y^n(t)+ (\tilde X^n(t))^- -\tilde{G}^n(t)
      + n\int_0^t\mu_{\Delta^n(s)}(\tilde X^n(s))^- ds.
    \end{split}
  \end{equation}
  Note that
  \begin{equation*}
    \sup_{0\leq t \leq T}
    \Big|\tilde S^n\Big (\int_0^t\mu_{\Delta^n(s)}\cdot (X^n(s)\wedge 1) ds\Big) \Big|
    \leq
    \sup_{0\leq t \leq \max_{i\in {\cal S}}\{\mu_iT\}}
    \Big|\tilde S^n (t)\Big|.
  \end{equation*}
  By \eqref{eq:arrival-diffusion}, \eqref{eq:X-0-diffusion}--\eqref{eq:HT-QED}, Lemma \ref{lem:Mark} and the definition of $\tilde Y^n$ in \eqref{eq:Y-single-mms}, $\{\tilde Y^n, n\in \Z_+\}$ is stochastically bounded.
  It is clear, by $(\tilde X^n(t))^-\leq 1/\sqrt n$, that $\{(\tilde X^n)^-, n\in \Z_+ \}$ is also stochastically bounded.
  In view of $\tilde Y^n(t)+(\tilde X^n(t))^- -\tilde{G}^n(t) \leq \tilde Y^n(t)+ (\tilde X^n(t))^-$ and \eqref{eqn:diffdyn-1}, it follows from Lemma 4.1 in \cite{KLRS2007} that $(\tilde X^n)^+$ can be bounded by the one-dimensional Skorohod mapping of $\tilde Y^n+ (\tilde X^n)^-$.
  Hence, $\{(\tilde X^n)^+, n\in \Z_+\}$ is stochastically bounded.
  This consequently implies that for any $T\ge 0$, as $n\to\infty$,
  \begin{equation}
    \label{eqn:fluid-queue}
    \sup_{0\leq t\leq T}\frac{(X^n(t)-1)^+}{n} \Rightarrow 0.
  \end{equation}
  We now prove the convergence of the sequence $\{\tilde Y^n, n\in \Z_+\}$.
  It follows from the above stochastic boundedness analysis that for any $T\ge 0$, as $n\to\infty$,
  \begin{equation}
    \label{eqn:y-term}
    \sup_{0\leq t\leq T} \frac{\tilde Y^n(t)+(\tilde X^n(t))^--\tilde G^n(t)}{\sqrt n}
    \Rightarrow 0.
  \end{equation}
  By \eqref{eqn:diffdyn-1}--\eqref{eqn:y-term}, as $n\to \infty$,
  \begin{eqnarray*}
      && \sup_{0\leq t\leq T}\int_0^t \mu_{\Delta^n(s)}(X^n(s)-1)^-ds \Rightarrow 0.
  \end{eqnarray*}
  The above limit together with  Lemma \ref{lem:Mark} implies that as $n\to\infty$,
  \begin{equation}
    \label{eq:mms-fluid-rate}
    \sup_{0\leq t\leq T}\left|\int_0^t\mu_{\Delta^n(s)}\cdot ( X^n(s)\wedge 1) ds- \mu t\right|\Rightarrow0.
  \end{equation}
  Applying the above limit and the random-time-change theorem (Corollary~1 of \cite{Whitt1980}), the sequence of processes $\{\tilde S^n (\int_0^t\mu_{\Delta^n(s)}\cdot (X^n(s)\wedge 1) ds ): t\geq 0\}$ converges in distribution to
  % \begin{align*}
  %   \frac{S_m\Big(n\int_0^t\mu_{\Delta^n(s)}\cdot ( X^n(s)\wedge 1) ds\Big)-\int_0^tn\mu_{\Delta^n(s)}\cdot (X^n(s)\wedge 1) ds}{\sqrt n}
  %   \tilde S^n_c \dto \sqrt{\mu}\sigma \tilde S,
  % \end{align*}
  $\sqrt{\mu}\theta \tilde S$  with $\{\tilde S(t): t\geq 0\}$ being a Brownian motion.
  It follows from conditions \eqref{eq:arrival-diffusion}, \eqref{eq:X-0-diffusion}-\eqref{eq:HT-QED} and Lemma~\ref{lem:Mark} that
  \begin{equation}
    \tilde Y^n\Rightarrow \tilde Y,
  \end{equation}
  where $\tilde Y=\{\tilde Y(t): t\geq 0\}$ with $\tilde Y(t)=\xi +\tilde{E}(t)-\sqrt{\mu}\theta\tilde{S}(t)-\tilde{\Delta}(t)+ \beta\mu t$.
  We have so far verified conditions (i) and (ii) in Theorem~\ref{thm:unified-reflection}. Condition~(iii) follows directly from \eqref{eqn:diffdyn} and Lemma~\ref{lem:mapping-abd-single}.
  This completes the proof.
\end{proof}

\begin{remark}
  For the classical $G/GI/1+GI$, the limit $\tilde \Delta$ in Lemma~{\rm\ref{lem:Mark}} becomes $0$ since $\cal S$ contains only a single state.
  Theorem~{\rm \ref{theorem:diffusion-single}}, consequently, gives the weak convergence for the queue length process of the classical $G/GI/1+GI$ considered by {\rm    \cite{WardGlynn2005}} and {\rm \cite{ReedWard2008}}.
\end{remark}

\subsection{$G/GI/n+GI$ in the Halfin-Whitt Regime}
\label{sec:Halfin-Whitt}

In this subsection, we apply our unified approach to establishing the diffusion approximation for $G/GI/n+GI$ where customer service requests are assumed to be processed by each server at speed 1 without loss of generality.
Since we use the general scaling \eqref{eq:pat-distr-scale}, our result covers the case in \cite{MandelbaumMomcilovic2010} where patience times are not scaled, and the case in \cite{ReedTezcan2012} where hazard-rate scaling is applied to the patience times.
We also generalize the latter work to a generally distributed service requirement.

Let $H_e(\cdot)$ denote the equilibrium distribution associated with
the distribution $H(\cdot)$ of customer service requirements, i.e.,
\begin{equation*}
  H_e(x)=\mu\int_0^x(1-H(s))ds,\ x\geq 0.
\end{equation*}
Thus the renewal function of the delayed renewal process with initial distribution $H_e(\cdot)$ and inter-renewal distribution $H(\cdot)$ is $\mu t$.
The following assumption on the service process is required for this example.

\begin{assumption}\label{QED}
  The customer service requirements $\{v_i^n, i\geq -Q^n(0)+1\}$ are independent and identically distributed with distribution function $H(\cdot)$ which has mean $1/\mu$ and variance $ \theta^2$.
  The remaining service requirements of the customers who are initially in service, $\{v^n_i, -X^n(0)+1 \leq i \leq -Q^n(0)\}$, are independent and identically distributed with distribution function $H_e(\cdot)$.
  Moreover, the two sequences are independent.
\end{assumption}

% \begin{remark}
%   {\rm The assumption on the the customers' remaining service requirements is not restrictive and just for the convenient to analyze the system, see \cite{MandelbaumMomcilovic2010} for a discussion.}
% \end{remark}

Let $D^n(t)$ be the number of customers whose service requirements
have been completed by time $t$ in the $n$th system. We then have
the following simple balance equation for the total number of customers in the $n$th system at time $t$:
\begin{equation}
  \label{eq:balance}
  X^n(t)=X^n(0)+E^n(t)-D^n(t)-G^n(t).
\end{equation}
Let $M(\cdot)$ denote the renewal function associated with $\{v_i^n,
i\geq -Q^n(0)+1\}$, i.e., $M(\cdot)$ satisfies the following renewal
equation
\begin{equation}
  \label{eq:renewal-eq}
  M(t)=H(t)+\int_0^tH(t-s)d M(s).
\end{equation}
Define
\begin{align}
  D^n_c(t) &= D^n(t)-n\mu t -(X^n(0)-n)^-\cdot (M(t)-\mu t)
  + \int_0^t(X^n(t-s)-n)^-d M(s),\label{service}\\
  \tilde D^n_c(t) &=\frac{D^n_c(t)}{\sqrt n}. \nonumber
\end{align}
The idea of \eqref{service}, which follows Equation (33) in \cite{ReedShaki2014}, is to center the service completion process using the renewal function $M(\cdot)$.
Then \eqref{eq:balance} becomes
\begin{equation}
  \label{eq:balance-transformed}
  \begin{split}
  X^n(t)
  &=X^n(0)+E^n(t) -G^n(t) - D_c^n(t)\\
  &\quad - n \mu t - (X^n(0)-n)^- \cdot (M(t) -\mu t)
  + \int_0^t(X^n(t-s) - n)^-d M(s).
  \end{split}
\end{equation}
% By Assumption \ref{General} and \eqref{eq:M_D},  we have $M_D(t)=\mu t$.
Applying diffusion scaling \eqref{eq:diffusion-def-gen} to
\eqref{eq:balance-transformed} implies that
\begin{equation}
  \label{eq:balance-diffusion}
  \begin{split}
  \tilde X^n(t) = \tilde Y^n(t) -\tilde G_c^n(t)
                + \int_0^t (\tilde X^n(t-s))^-d M(s)
                - \mu\int_0^tf(\frac{1}{\mu}(\ds X(s))^+) ds,
  \end{split}
\end{equation}
where $\tilde G_c^n=\{\tilde G_c^n(t):t\ge 0\}$ is defined as in
\eqref{eq:G-hat} and
\begin{equation}
  \label{eq:Y-HW}
  \tilde Y^n(t)
  = \tilde X^n(0)+\tilde E^n(t) - \tilde D^n_c(t) +\beta^n\mu t
  + (\tilde X^n(0))^-\cdot (\mu t-M(t)).
\end{equation}
The following proposition yields the weak convergence for $\{\tilde Y^n, n\in \Z_+\}$.% given by \eqref{eq:Y-HW}.

\begin{proposition}\label{propo:servicediffusion}
  Assume that conditions~\eqref{eq:arrival-diffusion}--\eqref{eq:Lipschitz}, \eqref{eq:X-0-diffusion}--\eqref{eq:HT-QED}, and Assumption~{\rm\ref{QED}} hold. For the sequence of $G/GI/n+GI$ systems,  $\tilde Y^n \Rightarrow \tilde Y$ with
  \begin{equation*}
    \tilde Y(t) = \xi + \tilde{E}(t) - \tilde{D}(t) + \beta\mu t + \xi^-\cdot \left(\mu t-M(t)\right),
  \end{equation*}
  where $\tilde{D}=\{\tilde{D}(t): t\geq 0\}$ is a zero-mean Gaussian process, which is independent of $\tilde{E}$ and $\xi$, with the covariance given by
  \begin{equation}\label{eq:S-co-var}
    \E\big[\tilde{D}(s)\tilde{D}(t)\big]
    = 2\int_0^s \left(M(u)-u+\frac{1}{2}\right)du
    +\int_0^s\int_0^{t} M(s-u)M(t-v) dH(u+v)
  \end{equation}
  for any $0\le s\le t$.
\end{proposition}

The proof of this proposition is postponed until after we have
established the diffusion approximation
Theorem~\ref{theorem:diffusion-multi}. In order to use
Theorem~\ref{thm:unified-reflection}, we introduce a regulator
mapping in the following lemma.

\begin{lemma}\label{lem:mapping-abd-HW}
Assume that  $g(\cdot)$ is a locally Lipschitz continuous function with $g(0)=0$.
For any $y(\cdot)\in \D(\R_+,\R)$ and $M(\cdot)$ given by \eqref{eq:renewal-eq}, there exists a unique solution $x(\cdot)$ to the following equation
\begin{equation}\label{eq:mapping-general}
  x(t)
  =y(t)+\int_0^t \left(x(t-s)\right)^-dM(s)
  +\int_0^t g(\left(x(s)\right)^+)ds.
\end{equation}
Moreover, the mapping $\Phi_{M,g}(\cdot):\D(\R_+,\R)\rightarrow
\D(\R_+,\R)$ defined by $x=\Phi_{M,g}(y)$ is Lipschitz continuous in
the topology of uniform convergence over bounded intervals,
measurable with respect to the Borel $\sigma$-field generated by the
Skorohod $J_1$-topology, and $\Phi_{M,g}(\C(\R_+,\R))\subseteq
\C(\R_+,\R)$.
\end{lemma}

This lemma is a generalization of Proposition~7 in \cite{Reed2007} in which $g(\cdot)\equiv 0$ is assumed.
The proof of this lemma is presented in Appendix~\ref{mapping}.
% Note that the mapping $\Phi_{M,g}$ is an extension of the one in Section~4 of \cite{Reed2007}.
Following this lemma and \eqref{eq:balance-diffusion}, we have $\tilde X^n=\Phi_{M,g}(\tilde Y^n-\tilde G^n_c)$ with $g(t)=-\mu f(t/\mu)$.
Theorem~\ref{thm:unified-reflection} can now be applied to obtain the following diffusion approximation.

\begin{theorem}
  \label{theorem:diffusion-multi}
  Assume that conditions~\eqref{eq:arrival-diffusion}--\eqref{eq:Lipschitz} and \eqref{eq:X-0-diffusion}--\eqref{eq:HT-QED} hold. For the stochastic processes $\{\tilde X^n, n\in \Z_+\}$ associated with the sequence of $G/GI/n+GI$ systems, if Assumption~{\rm\ref{QED}} holds, then $\tilde{X}^n\Rightarrow \tilde{X}$ as $n\to\infty$, where $\tilde{X}=\{\tilde X(t):t\ge 0\}$ is the solution to the following
  \begin{equation}\label{eq:tilde X}
    \begin{split}
      \tilde{X}(t)=\tilde Y(t)+\int_0^t(\tilde{X}(t-s))^-dM(s)
      - \mu\int_0^tf(\frac{1}{\mu}(\tilde{X}(s))^+)ds,
    \end{split}
  \end{equation}
  and  $\tilde{Y}$ is given by Proposition {\rm \ref{propo:servicediffusion}}.
  Moreover, $\ds Q\dto \tilde X^+$ as $n\to\infty$.
\end{theorem}

\begin{proof} %[Proof of Theorem   \ref{theorem:diffusion-multi}]
In view of Lemma~\ref{lem:mapping-abd-HW},
Proposition~\ref{propo:servicediffusion} and (\ref{eq:Y-HW}), we
just need to verify Theorem~\ref{thm:unified-reflection} (i). Let
$\tilde{Q}_0^n$ denote the queue length process of the many-server
system without abandonment. It is proved in \cite{Reed2009} that
$\{\tilde Q_0^n, n\in \Z_+\}$ is stochastically bounded. Again, by
Theorem~2.2  of \cite{DaiHe2010}, with probability one, $\ds Q(t)\le
\ds Q_0(t)$ for all $t\ge 0$. This implies that $\{\tilde{Q}^n, n\in
\Z_+\}$ is stochastically bounded.
\end{proof}

%Using Theorems  \ref{theo:abandoncouple} and \ref{theorem:diffusion-multi}, we get {\color{red}(Why not put it in Section~\ref{sec:outline-proof}, right after Theorem \ref{theo:abandoncouple} since we just want to say our framework can be specialized to no-scaling and hazard-rate scaling cases. This is not only true in Multi-server model, but also true for the single-server model in Subsection 3.1)}

\begin{remark}
 %As how to scale the patience time distribution is not unique, we wonder how to approximate a given specific system.
 For a given $n$-server system with patience-time distribution $F(\cdot)$, from  Theorem {\rm \ref{theorem:diffusion-multi}} and Corollary {\rm \ref{corollary:abandon-limit}}, we  can use
\begin{equation}
 \mu \int_0^t \sqrt{n}F\left(\frac{1}{\mu\sqrt{n}}\frac{Q(s)}{\sqrt{n}}\right)ds
\end{equation}
to approximate $G(t)/{\sqrt{n}}$.  In particular, if
$F(x)=1-\exp(-\int_0^x h(s)ds)$, then
\begin{equation}
\begin{split}
    \sqrt{n} \Big[1-\exp \Big(-\int_0^{ x/(\mu\sqrt{n})} h(s)ds \Big)\Big]=&\sqrt{n} \Big[1-\exp \Big(-\frac{1}{\mu\sqrt{n}}\int_0^{x} h \Big(\frac{1}{\mu\sqrt{n}}s \Big)ds \Big) \Big]\\
    \approx&\frac{1}{\mu}\int_0^{x} h\Big(\frac{1}{\mu\sqrt{n}}s \Big)ds\approx\frac{1}{\mu}\int_0^{x} h \Big(\frac{\sqrt{n}}{\lambda^n}s \Big)ds,
\end{split}
\end{equation}
%(with $\lambda^n\approx n\mu$.)
which implies that $\int^t_0 \int^{{Q(s)}/{\sqrt{n}}}_0 h\Big(
\frac{\sqrt n u}{\lambda^n}\Big) du ds$ can  approximate
$G(t)/{\sqrt{n}}$ well.
%Using the above formula, we can verify that both results from Corollary 3.1 can be easily verified. This is the old paragraph:
{\rm\cite{DaiHe2013}} proposed %$\int^t_0 \int^{\tilde Q^n(s)}_0 h\Big(\frac{u}{\mu}\Big) du ds$ to approximate
this approximation for the scaled abandonment process $ G(t)/\sqrt
n$ when the patience-time distribution
$F(x)=1-\exp(-\int_0^xh(s)ds)$. %If the patience-time distribution  $F^n(x)=1-\exp(-\int_0^xh(s)ds)$, their approximation for the scaled abandonment process $\tilde G^n (t)$ is modified as $\int^t_0 \int^{\tilde Q^n(s)}_0 h\Big( \frac{\sqrt n u}{\lambda^n}\Big) du ds$.
Numerical experiments showed that their approximations are very
accurate. Hence, our Corollary {\rm \ref{corollary:abandon-limit}}
and Theorem {\rm \ref{theorem:diffusion-multi}} theoretically
validate their approximations from the perspective of the diffusion
approximations.
\end{remark}

\begin{remark}\label{special}
When $f(x)=\alpha x$, that is, there is no hazard rate scaling of
the patience-time distribution, Theorem {\rm
\ref{theorem:diffusion-multi}} gives the diffusion approximation of
the queue length for $G/GI/n+G$, which is obtained by {\rm
\cite{MandelbaumMomcilovic2010}}. If the service times are
independent and exponentially distributed $($Assumption {\rm
\ref{QED}} holds by the memoryless property of the exponential
distributions$)$, then Theorem {\rm \ref{theorem:diffusion-multi}}
gives the diffusion approximations for $G/M/n+G$ with the hazard
rate scaling, which is studied by  {\rm \cite{ReedTezcan2012}}.
\end{remark}

In order to obtain Proposition \ref{propo:servicediffusion}, we
introduce the following lemma that is related to the weak
convergence of the pure empirical processes, and is of independent
interest itself. Its proof is presented in Appendix~\ref{mapping}.
To describe the lemma, let $C^n=\{C^n(t): t\geq 0\}$ be a sequence
of counting processes and $\tau^n_i$ be its $i$th jump point.
Furthermore, for each $n\in \Z_+$, let $\{u^n_i, i\in \Z_+\}$ be a
sequence of i.i.d. random variables with a finite mean and some
distribution function $H_{\star}(\cdot)$. Define $\tilde {\cal
T}^n=\{\tilde {\cal T}^n(t): t\geq 0\}$ and $\tilde U^n=\{\tilde
U^n(t): t\geq 0\}$ with
\begin{eqnarray*}
  \tilde {\cal T}^n(t) &=& \frac{1}{\sqrt n}
  \sum_{i=1}^{C^n(t)}\Big(
    \id{\tau_i^n+u_i^n>t}-(1-H_{\star}(t-\tau^n_i))
  \Big),\\
  \tilde U^n(t) &=& \frac{1}{\sqrt n}
  \sum_{i=1}^{\lfloor n\mu t \rfloor}\Big(
    \id{\frac{i}{n\mu}+u_i^n>t}-(1-H_{\star}(t-\frac{i}{n\mu}))
  \Big).
\end{eqnarray*}

\begin{lemma}\label{lem:equivalent}
% Let $\bar C^n=\{C^n(t)/n: t\geq 0\}$.
Assume that for each $k\in \Z_+$, $\{\tau^n_1,\cdots, \tau^n_k\}$
and $\{u^n_{i}, i\geq k\}$ are independent, and as $n\to\infty$
\begin{equation}
  \label{eq:C-conv-e}
  \bar C^n \Rightarrow \bar e,
\end{equation}
where $\bar e(t)=\mu t$.
Then for any $T>0$,
\begin{eqnarray}
\sup_{0\leq t\leq T}  \Big| \tilde {\cal T}^n(t)-\tilde
U^n(t)\Big|\Rightarrow 0, \label{2013-Dec-1}
\end{eqnarray}
and $\tilde {\cal T}^n \Rightarrow \tilde {\cal T}$ where $\tilde
{\cal T}=\{\tilde {\cal T}(t): t\geq 0\}$ is a Gaussian process
with continuous sample  paths, zero mean and covariance function
given by
  \begin{equation}
    \label{eq:M2-covarance}
    \E [\tilde {\cal T}(s) \tilde {\cal T}(t)]
    = \mu\int_0^sH_{\star}(s-u)[1-H_{\star}(t-u)] du, \ \ 0\leq s \leq t.
  \end{equation}
\end{lemma}

\begin{proof}[Proof of Proposition \ref{propo:servicediffusion}]
 The asymptotic analysis, in particular that of $\{D^n(t): t\geq 0\}$, follows the idea %of \cite{ReedShaki2014}, with some technical results from
of \cite{Reed2009} and \cite{KrichaginaPuhalskii1997}. For
completeness, we include the proof here.

%The key issue preventing us from directly citing the results in \cite{ReedShaki2014} is the abandonment, which does not appear in \cite{ReedShaki2014}.
%Let $K^n(t)$ be the number of customers who have entered service by time $t$.
%The process $\{K^n(t): t\geq 0\}$ plays the role of separating the buffer and the server pool. Of course, due to the differences between models with and without abandonment, the asymptotic study of $\{K^n(t): t\geq 0\}$ may need different methods.We first present a set of relationships from \cite{ReedShaki2014}  which is applicable for all $G/GI/N_n+GI$ systems.
Let $K^n(t)$ be the number of customers who have entered service by
time $t$, and denote by $\kappa_i^n$ the $i$th jump time of the
counting process $\{K^n(t): t\geq 0\}$. Define
\begin{eqnarray*}
  \tilde M^n_1(t)&=&\frac{1}{\sqrt n}\sum_{i=-Q^n(0)+1}^{K^n(t)-Q^n(0)}\left(\id{\kappa_i^n+v_i^n>t}-(1-H(t-\kappa_i^n))\right), \\
 \tilde N^n_1(t)&=&\frac{1}{\sqrt n}
  \sum_{i=1}^{\lfloor\mu nt \rfloor}\Big(\id{\frac{i}{n\mu}+v_{-Q^n(0)+i}^n>t}-(1-H(t-\frac{i}{n\mu}))\Big), \label{eq:M2}
  \end{eqnarray*}
  and
  \begin{eqnarray*}
 \tilde M^n_0(t)&=&\frac{1}{\sqrt n}\sum_{i=-X^n(0)+1}^{-Q^n(0)}\left(\id{v_i^n>t}-(1-H_e(t))\right), \\
 \tilde  N^n_0(t)&=&\frac{1}{\sqrt n}\sum_{i=1}^n
  \left(\id{v_{-Q^n(0)-(n-i)}^n>t}-(1-H_e(t))\right), \label{eq:W0}
\end{eqnarray*}
where $\{v^n_i, i\leq -Q^n(0)\}$ are independent and identically distributed with  distribution function $H_e(\cdot)$, and independent of $\{v^n_i, i\geq -Q^n(0)+1\}$.
Hence, $\tilde N^n_1=\{\tilde N^n_1(t) :t\geq 0\}$  and $\tilde N^n_0=\{\tilde N^n_0(t) : t\geq 0\}$  are two independent processes.
Let $\tilde M^n_1=\{\tilde M^n_1(t) :t\geq 0\}$.
Similarly, we define the process  $\tilde M^n_0$.
By the weak convergence of the empirical processes (see Chapter 3 in
\cite{ShorackWellner2009}),  we have
\begin{eqnarray}
\tilde N^n_1 \Rightarrow  \tilde N_1 \ \ \mbox{and} \ \ \tilde N^n_0
\Rightarrow\tilde N_0, \label{2014-march-24-10}
\end{eqnarray}
where, by the independence of $\tilde N^n_1$ and $\tilde N^n_0$, $\tilde N_1=\{\tilde N_1(t): t\geq 0\}$ and $\tilde
N_0=\{\tilde N_0(t): t\geq 0\}$ are two independent Gaussian
processes with continuous sample  paths, zero mean  and covariance
functions given by  \eqref{eq:M2-covarance}, and  $H_e(s)\wedge
H_e(t)-H_e(s)H_e(t)$, respectively.
By the assumption on the independence between the service times %$\{v_i^n, i\geq -Q^n(0)+1\}$
and arrival process $\{E^n(t) : t\geq 0\}$, %and the initial number of customers $X^n(0)$,
and in view of Theorem 2.8 in \cite{Billingsley1999}, we have
\begin{equation*}
  %\label{2013-dec-21}
  (\tilde E^n, \tilde N^n_1,\tilde N^n_0)\Rightarrow (  \tilde E,
  \tilde N_1, \tilde N_0).
\end{equation*}
Using (\ref{eq:X-0-diffusion}) and Theorem 3.9 in
\cite{Billingsley1999}, we obtain
\begin{eqnarray}
(\tilde X^n(0), \tilde E^n, \tilde N^n_1,\tilde N^n_0)\Rightarrow
(\xi, \tilde E, \tilde N_1, \tilde N_0). \label{2013-dec-21}
\end{eqnarray}
Again by \eqref{eq:X-0-diffusion}, we have
\[
\frac{X^n(0)}{n}\Rightarrow 1.
\]
In view of the definitions of $\{\tilde M^n_0(t): t\geq 0\}$ and
$\{\tilde N^n_0(t): t\geq 0\}$,
\begin{eqnarray}
\sup_{0\leq t\leq T}\Big|\tilde M^n_0(t)-\tilde
N^n_0(t)\Big|\Rightarrow 0. \label{2014-march-24-14}
\end{eqnarray}
Let $\bar K^n=\{K^n(t)/n: t\geq 0\}$.
Recall that in the proof of Theorem~\ref{theorem:diffusion-multi} of \cite{Reed2009} and
Theorem~2.2 of \cite{DaiHe2010}, the sequence of queue length processes $\{\tilde{Q}^n, n\in \Z_+\}$ is stochastically bounded.
It follows from Theorem \ref{theo:abandoncouple} and
\[
Q^n(t)=Q^n(0)+A^n(t)-K^n(t)-G^n(t)
\]
that $\bar K^n\dto \bar e$ as $n\to\infty$. By Lemma
\ref{lem:equivalent}, we have
\begin{eqnarray}
\sup_{0\leq t\leq T}\Big|\tilde M^n_1(t)-\tilde
N^n_1(t)\Big|\Rightarrow 0. \label{2014-march-24-15}
\end{eqnarray}
Therefore, it follows from
\eqref{2013-dec-21}--\eqref{2014-march-24-15} that
\begin{eqnarray}
(\tilde X^n(0), \tilde E^n, \tilde M^n_1,\tilde M^n_0)\Rightarrow
(\xi, \tilde E, \tilde N_1, \tilde N_0). \label{2013-dec-21-1}
\end{eqnarray}

By Proposition 2.1 in \cite{Reed2009},  similar to Proposition~4.4
of \cite{ReedShaki2014}, and in view of \eqref{service}, we have
 %the service completion process given by \eqref{service} can be written as
\begin{eqnarray*}
  \label{eq:S-hat-n}
  \tilde D^n(t) = -\left( \tilde M^n_0(t)+ \tilde M^n_1(t)\right)
  -\int_0^t \left( \tilde M^n_0(t-s)+ \tilde M^n_1(t-s)\right) dM(s).
\end{eqnarray*}
Thus, the proposition follows directly from \eqref{2013-dec-21-1}
and the continuous mapping theorem.
\end{proof}

\subsection{$G/M/n^\alpha+GI$ in the NDS Regime}
\label{sec:NDS}

In this subsection, we consider the sequence of $G/M/n^\alpha+G$ queues with $\alpha\in (0, 1)$ in the NDS regime considered in \cite{Atar2012NDS} and \cite{GurvichAtar2014}.
Again, the speed for each server to process customer service requirements is assumed to be one.
The following assumption on the customer service times is needed.
% A special case is the so called Non-Degenerate Slowdown (NDS) regime.
% Fix $0<\alpha<1$ and assume the number of servers $N_n=n^{\alpha}$.
\begin{assumption}\label{assump:Atar}
  For the $n$th system $G/M/n^\alpha+G$, the customers' remaining service requirements and service requirements $\{v^n_i, i\geq -X^n(0)+1\}$ are independent and exponentially distributed with parameter $\mu^n=n^{1-\alpha}\mu$.
\end{assumption}

By the memoryless property of customer service times, as usual, the evolution of the process $X^n$ can be characterized by the system dynamics equation
\begin{equation*}
  X^n(t)=X^n(0)+E^n(t)-S_p\Big(\mu^n\int_0^t( X^n(s)\wedge n^\alpha) ds\Big)-G^n(t),
\end{equation*}
where $S_p(\cdot)$ is a Poisson process with rate one.
Since
\[
n\mu t-\mu^n\int_0^t (X^n(s)\wedge n^{\alpha})ds=\mu^n\int_0^t (X^n(s)-n^{\alpha})^-ds,
\]
we have
\begin{equation}\label{equa:SystemDynamic}
  \begin{split}
    X^n(t)-n^\alpha &= X^n(0)-n^\alpha+E^n(t)-\lambda^n t\\
    &\quad-\Big[S_p\Big(\mu^n\int_0^t(X^n(s)\wedge n^\alpha)ds\Big)
      -\mu^n\int_0^t(X^n(s)\wedge n^\alpha)ds\Big]
    -G^n(t)\\
    &\quad+(\lambda^n-n\mu)t+\mu^n\int_0^t(X^n(s)-n^\alpha)^-ds.
  \end{split}
\end{equation}
Applying the diffusion scaling for $X^n,E^n$ and $G^n$ and the definition of $\beta^n$ in \eqref{eq:HT-QED}, we obtain
\begin{equation}
  \label{eqn:diffdyn-1-NDS}
 (\tilde X^n(t))^+  =
    \tilde Y^n(t)+ (\tilde X^n(t))^- -\tilde{G}^n(t) +\mu^n\int_0^t(\tilde X^n(s))^- ds,
 \end{equation}
where
\begin{align}
  \ds Y(t) &=\ds X(0) +\ds E(t) - \ds S_p(t)  +\beta^n\mu t, \label{2012-Aug-6-3}\\
 \ds S_p(t) &= \frac{1}{\sqrt n}
  \left[
    S_p\Big(\mu^n\int_0^t(X^n(s)\wedge n^\alpha)ds\Big)
    -\mu^n\int_0^t(X^n(s)\wedge n^\alpha)ds
  \right].\label{eq:service-diffusion}
\end{align}
We can see that $(\tilde X^n(t))^+$, by \eqref{eqn:diffdyn-1-NDS}, is related to the solution of the Skorohod equation. This observation is useful in establishing the stochastic boundedness of queue length processes, see the proof of Proposition \ref{proposition:serviceNDS}.
First we prove the following result.
\begin{proposition}\label{lem:negative-NDS}
  Assume that conditions~\eqref{eq:arrival-diffusion} and \eqref{eq:HT-QED}, and Assumption {\rm\ref{assump:Atar}}  hold.
  If condition \eqref{eq:X-0-diffusion} holds with $\mathbb{P}(\xi\geq 0)=1$, then $(\ds X)^- \Rightarrow 0$ as $n\to\infty$.
\end{proposition}
\begin{proof}
  The proof is similar to the one in \cite{Atar2012NDS}. For any fixed $\varepsilon>0$, we will prove that
  \begin{equation}
  \prob\Big(\sup_{0\leq t\leq T}(\ds X(t))^-\geq \varepsilon\Big) \to 0 \ \mbox{as $n\to \infty$}.
\end{equation}
Define
\begin{equation*}
\Omega^n_0(\varepsilon)=\{(\ds X(0))^-\le \frac{\varepsilon}{4}\}, \Omega^n(\varepsilon, T)=\{\sup_{0\leq t\leq T}(\ds X(t))^-\geq \varepsilon\},
t_1^n=\inf\{t\geq 0: (\ds X(t))^-\geq \varepsilon\}.
\end{equation*}
Because of $\xi \geq 0$ with probability one, by \eqref{eq:X-0-diffusion}, it is sufficient to prove that
the probabilities of the event $\Omega^n(\varepsilon, T)\cap \Omega^n_0(\varepsilon)$ vanishes as $n$ converges to infinity.
On the set $\Omega^n(\varepsilon, T)\cap \Omega^n_0(\varepsilon)$, define
$t^n_{2}=\sup\{0\le t\leq \eta^n: (\ds X(t))^-\leq {\varepsilon}/{3}\}\vee 0$.
By the definitions of $t^n_1$ and $t^n_{2}$, we clearly have that
\begin{equation*}
  (\ds X(t^n_1))^-\ge \varepsilon \ \ \textrm{and}\ \ (\ds X(t^n_{2}-))^-\le \frac{\varepsilon}{3}.
\end{equation*}
Note that $(\ds X(t))^-\geq \frac{\varepsilon}{3}$ for all $t\in[t^n_{2}, t^n_1]$. As a result, $\ds X(t)=-(\ds X(t))^-$ and there is no abandonment during this interval. From equation \eqref{eqn:diffdyn-1-NDS},
%we have
%\begin{equation*}
%  \begin{split}
%    &\tilde Y^n(t)=
%    -(\tilde X^n(t))^--\mu^n\int_0^t(\tilde X^n(s))^- ds.
%  \end{split}
%\end{equation*}
we have that on $\Omega^n(\varepsilon, T)\cap \Omega^n_0(\varepsilon)$,
 \begin{equation*}
  \begin{split}
  \tilde Y^n(t^n_1)-\tilde Y^n(t^n_2-) = & (\tilde X^n(t^n_2-))^- - (\tilde X^n(t^n_1))^--\mu^n\int_{t^n_2}^{t^n_1}(\tilde X^n(s))^- ds\\
  \leq &-\frac{2\varepsilon}{3}-\frac{\varepsilon\mu^n(t^n_1-t^n_2)}{3}.
  \end{split}
\end{equation*}
For fixed $\delta$, depending on whether $t^n_1-t^n_2> \delta$ or $t^n_1-t^n_2\leq \delta$, we get
\begin{eqnarray}
  &&\lim_{n\to\infty}\mathbb{P}\left(\Omega^n(\varepsilon, T)\cap \Omega^n_0(\varepsilon)\right)\nonumber\\
  && \ \ \ \leq \lim_{n\to\infty}\mathbb{P}\Big(\tilde Y^n(t^n_1)-\tilde Y^n(t^n_2-)\leq -\frac{2\varepsilon}{3}-\frac{\varepsilon\mu^n(t^n_1-t^n_2)}{3}\Big)\nonumber\\
  && \ \ \ \leq \lim_{n\to\infty}\mathbb{P}\Big(\sup_{\substack{0\leq s, t\leq T\\ |s-t|\leq \delta}} \Big|\tilde Y^n(t)-\tilde Y^n(s)\Big|\geq \frac{2\varepsilon}{3}\Big)+\lim_{n\to\infty}\mathbb{P}\Big(\sup_{0\leq t\leq T}\Big|\tilde Y^n(t)\Big|\geq \frac{\varepsilon\mu^n\delta}{6}\Big).
  \label{Atar-2015-1}
\end{eqnarray}
Noting that for $u, v\in [0, \infty)$ with $u<v$,
\begin{align*}
  0 &\leq \mu^n\int_0^v(X^n(s)\wedge n^\alpha)ds -\mu^n\int_0^u(X^n(s)\wedge n^\alpha)ds\\
    &=\mu^n\int_u^v(X^n(s)\wedge n^\alpha)ds \\
    &\leq n(v-u),
\end{align*}
and $\mu^n\int_0^t(X^n(s)\wedge n^\alpha)ds \leq nt$,
we know $\{\tilde S^n_p, n\in \Z_+\}$ with $\tilde S^n_p=\{ \ds S_p(t): t\geq 0\}$ given by \eqref{eq:service-diffusion}  is $C$-tight. Hence, by \eqref{eq:arrival-diffusion} and \eqref{eq:HT-QED}, we have $\{\tilde Y^n, n\in \Z_+\}$ is $C$-tight. Letting $n\to\infty$ and then $\delta\to 0$, the term in \eqref{Atar-2015-1} then converges to 0. This completes the proof.
\end{proof}

To get the diffusion approximation for the queue length processes, we need the following proposition.
\begin{proposition}
  \label{proposition:serviceNDS}
  Under the conditions required by Proposition {\rm \ref{lem:negative-NDS}}, $\tilde{S}_p^n\Rightarrow \sqrt \mu \tilde{S}_p$ as $n\to\infty$, where $\tilde{S}_p=\{\tilde S_p(t): t\geq 0\}$is a standard Brownian motion which is independent of the limit of the arrival processes $(\tilde E$ given by \eqref{eq:arrival-diffusion}$)$ as well as of the initial states $(\xi$ given by \eqref{eq:X-0-diffusion}$)$.
\end{proposition}
\begin{proof}
Consider the solution $(\tilde Z^n(\cdot), \tilde Z_r^n(\cdot))$ to the following Skorohod equation: with probability one,
\begin{eqnarray*}
&&\tilde Z^n(t)=\tilde Y^n(t)+ (\tilde X^n(t))^-  +\tilde Z_r^n(t), \ t\geq 0,\\
&& \tilde Z^n(t)\geq 0,  \ t\geq 0;\\
&&\tilde Z^n_r(\cdot) \ \ \mbox{is nondecreasing};\\
&&\int^\infty_0 \id{\tilde Z^n(t)>0} d\tilde Z_r^n(t)=0.
 \end{eqnarray*}
By $C$-tightness of  $\{\tilde Y^n, n\in \Z_+\}$ given by the proof of Proposition \ref{lem:negative-NDS}, and the Lipschitz continuity of the Skorohod mapping, it follows from Proposition \ref{lem:negative-NDS} that $\{\tilde Z^n, n\in \Z_+\}$ is stochastically bounded.
By $ \tilde Y^n(t)+ (\tilde X^n(t))^- \geq \tilde Y^n(t)+ (\tilde X^n(t))^- -\tilde{G}^n(t)$ with probability one, and \eqref{eqn:diffdyn-1-NDS}, with the help of Lemma 4.1 in \cite{KLRS2007}, we know that
$(\ds X(t))^+$ $(=\ds Q(t))$ can be bounded by $\tilde Z^n(t)$.
Therefore, $\{\ds Q, n\in \Z_+\}$ is also stochastically bounded. Thus, by Theorem \ref{theo:abandoncouple}, we have that as $n\rightarrow \infty$
\begin{eqnarray}
\bar{G}^n\Rightarrow 0 \ \ \mbox{with $\bar G^n=\{\frac{1}{n}G^n(t): t\geq 0\}$}. \label{Atar-2015-2}
\end{eqnarray}
On the other hand, by \eqref{eqn:diffdyn-1-NDS},
\begin{equation}
  \label{Atar-2015-3}
\frac{\mu^n}{\sqrt n}\int_0^t(\tilde X^n(s))^- ds
 =\frac{1}{\sqrt n} \Big((\tilde X^n(t))^+
   - \tilde Y^n(t)-(\tilde X^n(t))^- +\tilde{G}^n(t)\Big).
 \end{equation}
Combining (\ref{Atar-2015-2})-\eqref{Atar-2015-3} yields that
\begin{equation*}
  \frac{1}{n}\int_0^{\cdot}\mu^n(X^n(s)-n^\alpha)^-ds\Rightarrow 0
  \quad \hbox{as}\ n\rightarrow \infty,
\end{equation*}
which consequently implies that
\begin{equation}\label{equation:idlecumulative}
  \frac{1}{n}\int_0^{\cdot}\mu^n(X^n(s)\wedge n^\alpha)ds\Rightarrow \bar e(\cdot),
  \quad \hbox{as}\ n\rightarrow \infty,
\end{equation}
where $\bar e(t)=\mu t$.
The proposition  directly follows from \eqref{eq:service-diffusion} and  the random-time-change theorem (Corollary~1 of \cite{Whitt1980}).
\end{proof}

Now we are ready to state the diffusion approximation for the queue length processes.
\begin{theorem}
  \label{theorem:diffusion}
  Assume that conditions~\eqref{eq:arrival-diffusion}--\eqref{eq:Lipschitz} and
  \eqref{eq:X-0-diffusion}--\eqref{eq:HT-QED} hold. If Assumption {\rm \ref{assump:Atar}}
  holds and $\xi \geq 0$ with probability one, then $\tilde{X}^n\Rightarrow \tilde{X}$ as $n\to\infty$.
  Here $\tilde{X}=\{\tilde X(t): t\geq 0\}$ is
  given by $\tilde{X}=\Phi_g(\tilde Y)$ $($recall that $\Phi_g(\cdot)$ is defined in Lemma~{\rm\ref{lem:mapping-abd-single})} with
  \begin{align*}
  g(t)&=-\mu f\Big(\frac{1}{\mu}t \Big),  \ t\geq 0;\\
  \tilde Y&=\{\tilde Y(t): t\geq 0\}, \
  \tilde Y(t)= \xi +\tilde{E}(t) - \sqrt{\mu}\tilde{S}_p(t) + \beta\mu t,
  \end{align*}
%  where $\tilde{Q}=\{\tilde Q(t): t\geq 0\}$ is the solution to the following
%    \begin{align}
%      \label{eq:thm-limit-map}
%      &\tilde{Q}(t)=\xi +\tilde{E}(t) - \sqrt{\mu}\tilde{S}_p(t) + \beta\mu t
%      -\mu\int_0^tf(\frac{\tilde{Q}(s)}{\mu})ds
%      +\tilde{L}(t),\\
%      \label{eq:thm-limit-positive}
%      &\tilde{Q}(t)\geq 0,\quad t\geq 0,\\
%      \label{eq:thm-limit-mono}
%      &\tilde{L}(t) \mbox{ is nondecreasing and }\tilde{L}(0)=0,\\
%      \label{eq:thm-limit-complementary}
%      &\int_0^{\infty}\tilde{Q}(s)d\tilde{L}(s)=0,
%    \end{align}
    where $\tilde{S}_p$ given by Proposition {\rm \ref{proposition:serviceNDS}} is a standard Brownian motion independent of $\xi$ and $\tilde{E}$.
    Moreover, $\ds Q\dto \tilde X$ as $n\to\infty$.
\end{theorem}

\begin{proof}[Proof of Theorem {\rm\ref{theorem:diffusion}}]
First from condition \eqref{eq:X-0-diffusion} on the initial states, the condition \eqref{eq:arrival-diffusion} on the arrival process, \eqref{eq:HT-QED} on the traffic condition, Proposition \ref{lem:negative-NDS} on $(\ds X(t))^-$, and Proposition~\ref{proposition:serviceNDS} for $\{\tilde S^n_p, n\in \Z_+\}$, we have that as $n\to\infty$,
\begin{equation*}
  %\label{eq:Y-limit}
  \ds Y+(\ds X)^-\dto \xi + \tilde E - \sqrt \mu \tilde S
  + \beta \bar e.
\end{equation*}
Note that by \eqref{eqn:diffdyn-1-NDS},
\begin{equation*}
  \label{NSD-proof}
  (\tilde X^n(t))^+ =\tilde Y^n(t)+(\ds X(t))^-  -\tilde{G}^n_c(t)-\mu \int^t_0 f(\frac{1}{\mu} (\tilde X^n(s))^+)ds +\mu^n\int_0^t(\tilde X^n(s))^- ds.
\end{equation*}
Recall that the stochastic boundedness of the queue length processes is proved in the proof of Proposition~\ref{proposition:serviceNDS}.
With $\ds Y+(\ds X)^-$ playing the role of $\ds Y$ in Theorem~\ref{thm:unified-reflection}, it follows from Lemma~\ref{lem:mapping-abd-single} and Theorem~\ref{thm:unified-reflection} that $\tilde Q^n=(\tilde X^n)^+\Rightarrow \tilde{X}$ as $n\to\infty$.
It then follows from Proposition~\ref{lem:negative-NDS} that $\tilde{X}^n\Rightarrow \tilde{X}$ as $n\to\infty$.
Hence, the proof of the theorem is completed.
\end{proof}

\begin{remark}\label{uniqueness}
Note that $\tilde{X}$ in Theorem {\rm \ref{theorem:diffusion}} has the similar structure as the one in Theorem {\rm \ref{theorem:diffusion-single}}.
 \end{remark}

\section{Proofs of   Theorems
\ref{theo:abandoncouple}-\ref{thm:unified-reflection}, and Corollary
\ref{corollary:abandon-limit}} \label{sec:outline-proof}
 In this
section, we give the proofs of the theorems and corollary given in
Section \ref{sec:fram-cust-aband}. First we look at the first
theorem, Theorem \ref{theo:abandoncouple}. The proof of
Theorem~\ref{theo:abandoncouple} is based on three properties of
such queueing systems, namely,
Propositions~\ref{prop:waiting-time-stoch-bound}--\ref{prop:little-law},
which  are of independent  interest themselves. We will first state
these properties and then apply them to prove
Theorem~\ref{theo:abandoncouple}. The proofs of the three
propositions are given in Appendix \ref{Appendix-Prop}. %Section~\ref{sec:detail-proof}.

In order to describe these three propositions, following
\cite{DaiHe2010}, we introduce two notions. The first one is the
\emph{offered waiting time} $\omega^n_i$, which denotes the time
that the $i$th arriving customer in the $n$th system after time $0$
has to wait before receiving service for each $i\ge 1$. When
$Q^n(0)>0$, we index the initial customer in the queue by $0, -1,
\ldots, -Q^n(0)+1$, with customer~$-Q^n(0)+1$ being the first one in
the queue. Each $\omega^n_i$ denotes the remaining waiting time of
the  $i$th customer for $i=-Q^n(0)+1, \cdots, 0$. The second notion
is the \emph{virtual waiting time} $\omega^n(t)$, which is the
amount of time a hypothetical customer with infinite patience would
have to wait before receiving service upon arriving at time $t$ in
the $n$th system. We introduce the diffusion-scaled virtual
waiting-time process $\tilde{\omega}^n= \{\tilde{\omega}^n(t): t\geq
0\}$ as
\begin{equation*}
  \tilde{\omega}^n(t)=\sqrt{n}\omega^n(t).
\end{equation*}
The first property of interest is the stochastic boundedness of the
scaled virtual waiting time and abandonment probability.
\begin{proposition}\label{prop:waiting-time-stoch-bound}
  Under assumptions \eqref{eq:lambda-limit}--\eqref{eq:pat-distr-scale} and \eqref{eq:stoc-bdd}, the sequences of
  the scaled virtual waiting times $\{\tilde \omega^n, n\in\Z_+\}$ and the scaled abandonment probabilities
$\{\tilde F_\omega^n, n\in\Z_+\}$ given by
\[
\tilde F_\omega^n=\Big\{\sup\limits_{0\leq i\leq
E^n(t)}\sqrt{n}F^n(\omega^n_i):t\geq 0\Big\}
\]
 are stochastically
bounded for any given $T>0$.
\end{proposition}

The second proposition reveals an asymptotic relationship between the abandonment process and the offered waiting time.
\begin{proposition}\label{prop:abandon-patience-relation}
  Under assumptions \eqref{eq:lambda-limit}--\eqref{eq:pat-distr-scale} and \eqref{eq:stoc-bdd}, for each $T>0$,
  \begin{equation*}
    \sup\limits_{0\leq t\leq T}    \bigg|
    \tilde{G}^n(t)-\frac{1}{\sqrt{n}}\sum_{j=1}^{E^n(t)}F^n(\omega^n_j)
    \bigg|\Rightarrow 0,
    \quad \textrm{as } n\to\infty.
  \end{equation*}
 \end{proposition}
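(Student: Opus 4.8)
The plan is to read the right-hand side as the predictable conditional mean of the (unscaled) abandonment count, and to control the two sources of discrepancy separately: a centered martingale fluctuation, and a boundary term coming from the difference between ``abandons eventually'' and ``has abandoned by time $t$''. Throughout I write $a^n_j$ for the arrival epoch of the $j$th customer and use that, under FCFS, the offered waiting time $\omega^n_j$ equals the virtual waiting time just before $a^n_j$, so that $\max_{j\le E^n(t)}\omega^n_j\le \frac{1}{\sqrt n}\sup_{0\le s\le t}\tilde{\omega}^n(s)$; by Proposition~\ref{prop:waiting-time-stoch-bound} the latter is stochastically bounded, which is exactly what makes every relevant $\omega^n_j$ of order $O(n^{-1/2})$. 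A customer $j$ with $\gamma^n_j<\omega^n_j$ abandons at $a^n_j+\gamma^n_j$, while one with $\gamma^n_j\ge\omega^n_j$ is served; hence, writing $\hat{G}^n(t)=\sum_{j=1}^{E^n(t)}\id{\gamma^n_j<\omega^n_j}$ for the number of arrivals by $t$ that ever abandon, we have $G^n(t)=\hat{G}^n(t)-A^n(t)$, where $A^n(t)=\#\{j\le E^n(t):\,t-a^n_j<\gamma^n_j<\omega^n_j\}$ counts those still waiting out a doomed sojourn at time $t$. This gives the exact decomposition
\begin{equation*}
  \tilde{G}^n(t)-\frac{1}{\sqrt n}\sum_{j=1}^{E^n(t)}F^n(\omega^n_j)
  =\frac{1}{\sqrt n}\sum_{j=1}^{E^n(t)}\Big(\id{\gamma^n_j<\omega^n_j}-F^n(\omega^n_j)\Big)
  -\frac{A^n(t)}{\sqrt n}.
\end{equation*}

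The first term is a martingale. The structural observation is that, because $\gamma^n_j$ is independent of the arrival and service primitives and of $\gamma^n_1,\dots,\gamma^n_{j-1}$, while $\omega^n_j$ is measurable with respect to these, the variable $\omega^n_j$ is independent of $\gamma^n_j$. Thus, with $\mathcal F^n_j$ the $\sigma$-field generated by the primitives together with $\gamma^n_1,\dots,\gamma^n_j$, we have $\E[\id{\gamma^n_j<\omega^n_j}\mid\mathcal F^n_{j-1}]=F^n(\omega^n_j)$ (there is no atom at the relevant points), so $M^n_k=\sum_{j=1}^k(\id{\gamma^n_j<\omega^n_j}-F^n(\omega^n_j))$ is an $\{\mathcal F^n_j\}$-martingale with predictable quadratic variation $\langle M^n\rangle_k=\sum_{j=1}^k F^n(\omega^n_j)(1-F^n(\omega^n_j))\le\sum_{j=1}^k F^n(\omega^n_j)$. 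Since, given the primitives, $E^n(t)$ is a deterministic time change, $\sup_{0\le t\le T}|M^n_{E^n(t)}|=\max_{0\le k\le E^n(T)}|M^n_k|$. On the event $\{\sup_{s\le T}\tilde{\omega}^n(s)\le\Gamma\}$ we have $\omega^n_j\le\Gamma/\sqrt n$ and hence $\langle M^n\rangle_{E^n(T)}\le E^n(T)\,F^n(\Gamma/\sqrt n)$; using $E^n(T)/n\to\mu T$ together with $\sqrt n F^n(\Gamma/\sqrt n)\to f(\Gamma)$ from \eqref{eq:pat-distr-scale}, this bound is of order $\sqrt n$. Feeding it into Lenglart's (or Doob's) inequality gives, for any $\eta>0$, $\pr{\max_{k\le E^n(T)}(M^n_k)^2\ge\eta n}\le b/(\eta n)+\pr{\langle M^n\rangle_{E^n(T)}\ge b}$ with $b=\Gamma'\sqrt n$; choosing $\Gamma$ and $\Gamma'$ large (and invoking Proposition~\ref{prop:waiting-time-stoch-bound}) makes both terms arbitrarily small, so $\frac{1}{\sqrt n}\max_{k\le E^n(T)}|M^n_k|\dto 0$.

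The remaining work is the boundary term $A^n(t)/\sqrt n$, and \emph{this is the main obstacle}. The naive bound $A^n(t)\le Q^n(t)$ only gives $A^n(t)/\sqrt n\le\tilde{Q}^n(t)=O_p(1)$, which does not vanish; the point is rather that only a vanishing fraction of the queued customers are doomed to abandon, since such a customer needs the very small patience $\gamma^n_j<\omega^n_j\le\Gamma/\sqrt n$ and must therefore have arrived within the last $\Gamma/\sqrt n$ units of time. Concretely, on $\{\sup_{s\le T}\tilde{\omega}^n(s)\le\Gamma\}$,
\begin{equation*}
  \sup_{0\le t\le T}A^n(t)\le\sup_{0\le t\le T}\#\Big\{j:\,a^n_j\in(t-\tfrac{\Gamma}{\sqrt n},\,t],\ \gamma^n_j<\tfrac{\Gamma}{\sqrt n}\Big\}\le 2\max_{k}S^n_k,
\end{equation*}
where $S^n_k$ counts the small-patience arrivals in the $k$th of the $O(\sqrt n)$ intervals of length $\Gamma/\sqrt n$ partitioning $[0,T]$. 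Conditionally on the primitives each $S^n_k$ is a sum of independent indicators with mean $\approx\lambda^n\frac{\Gamma}{\sqrt n}F^n(\frac{\Gamma}{\sqrt n})=O(1)$, so a Chernoff bound together with a union bound over the $O(\sqrt n)$ cells yields $\max_k S^n_k=o_p(\sqrt n)$ (indeed $O_p(\log n)$), whence $\sup_{0\le t\le T}A^n(t)/\sqrt n\dto 0$. Combining the two estimates with the decomposition above gives the claimed uniform convergence. The delicate points to get right are the joint measurability and independence that legitimize the conditional-mean identity, and the uniform-in-$t$ (rather than fixed-$t$) control of the window counts $S^n_k$, for which the stochastic boundedness of the virtual waiting time from Proposition~\ref{prop:waiting-time-stoch-bound} is indispensable.
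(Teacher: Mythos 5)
Your proposal is correct, and while it shares the paper's overall architecture---center the abandonment indicators around $F^n(\omega^n_j)$ and show that the discrepancy between ``has abandoned by $t$'' and ``will eventually abandon among arrivals by $t$'' vanishes---the technical execution of both steps is genuinely different. The paper never forms your exact identity $G^n(t)=\hat G^n(t)-A^n(t)$; instead it sandwiches $G^n(t)$ between $\sum_{i=1}^{E^n(\zeta^n(t)-)}\id{\gamma^n_i\le\omega^n_i}$ and $\sum_{i=1}^{E^n(t)}\id{\gamma^n_i\le\omega^n_i}$, where $\zeta^n(t)=\inf\{s\ge 0: s+\omega^n(s)\ge t\}$, and then kills the window sum by splitting it into a centered part and a compensator part: the centered part vanishes by Lemma~\ref{lemma:centralizedindicationfunction} (the Dai--He-type martingale centering lemma, which the paper imports rather than reproves, with the key hypothesis \eqref{eq:tech-bound-F(w)} supplied by Lemma~\ref{lemma:waitingfunction}), and the compensator part is bounded by $[\bar E^n(t)-\bar E^n(\zeta^n(t)-)]\cdot\sup_{i\le E^n(T)}\sqrt n F^n(\omega^n_i)$, which vanishes by the fluid limit \eqref{eq:arrival-fluid}, Lemma~\ref{lemma:waitingfunction}, and Lemma~\ref{lem:timediff} ($\zeta^n(t)\to t$ uniformly). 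Your route replaces both devices: the martingale step is proved from scratch via Lenglart/Doob with an explicit quadratic-variation bound $\langle M^n\rangle_{E^n(T)}\le E^n(T)F^n(\Gamma/\sqrt n)=O_p(\sqrt n)$, and the in-flight term $A^n$ is controlled geometrically by partitioning $[0,T]$ into $O(\sqrt n)$ cells of width $\Gamma/\sqrt n$ and applying a conditional Chernoff bound plus a union bound. What the paper's route buys is softness: no exponential concentration is needed, all estimates reduce to fluid limits, and Lemma~\ref{lemma:centralizedindicationfunction} (stated with a general bounded weight $h$) is reused later in the proof of Proposition~\ref{prop:little-law}, so one lemma serves two propositions. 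What your route buys is self-containedness and a sharper quantitative conclusion ($\max_k S^n_k=O_p(\log n)$ rather than mere $o_p(\sqrt n)$); both use Proposition~\ref{prop:waiting-time-stoch-bound} in the same essential way.

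Two small points to tighten. First, your parenthetical ``there is no atom at the relevant points'' is not granted by \eqref{eq:pat-distr-scale}--\eqref{eq:Lipschitz}, which allow $F^n$ to have atoms; the clean fix is to define ``doomed'' with the non-strict inequality $\id{\gamma^n_j\le\omega^n_j}$ (the tie convention the paper itself uses), for which the conditional mean is exactly $F^n(\omega^n_j)$ by right-continuity, with no continuity assumption. Second, your claim that each cell's conditional mean is $O(1)$ requires a uniform-over-cells bound on the arrival counts $N_k$; this follows from \eqref{eq:arrival-diffusion} via $N_k\le \lambda^n\Gamma/\sqrt n+2\sqrt n\sup_{t\le T}|\tilde E^n(t)|$, and should be stated so that the Chernoff bound is applied on the event $\{\max_k N_k\le C\sqrt n\}$. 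Neither issue affects the validity of the argument.
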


Note that neither of the above two propositions needs the modulus of
continuity to asymptotically vanish  as in \eqref{eq:modular-cont}.
The  next proposition establishes the relationship between the
virtual waiting time and the queue length. For this, condition
\eqref{eq:modular-cont} is required.
\begin{proposition}\label{prop:little-law}
  Under assumptions \eqref{eq:lambda-limit}--\eqref{eq:pat-distr-scale} and \eqref{eq:stoc-bdd}--\eqref{eq:modular-cont}, for each $T>0$,
  \begin{equation*}
    \sup_{0\leq t\leq T}
    \left|\mu \tilde{\omega}^n(t)-\tilde{Q}^n(t)\right|
    \Rightarrow 0,
    \quad \textrm{as } n\to\infty.
  \end{equation*}
\end{proposition}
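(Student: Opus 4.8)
The plan is to derive a sample-path identity expressing the queue length at time $t$ as the number of arrivals over the backward window running from $t$ back to the arrival epoch of the customer just entering service, and then to identify the length of that window with the virtual waiting time. Under FIFO the offered waiting time of the $i$th arrival agrees (up to a single customer, negligible after scaling) with the virtual waiting time at its arrival epoch, $\omega^n_i=\omega^n(\tau^n_i)$, so the service-entry map $\phi^n(s)=s+\omega^n(s)$ is nondecreasing. Writing $s^{*,n}(t)$ for its generalized inverse, so that $s^{*,n}(t)+\omega^n(s^{*,n}(t))=t$, a customer sits in queue at time $t$ exactly when it arrived in $(s^{*,n}(t),t]$ and has not yet abandoned. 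This gives
\[
Q^n(t)=\left[E^n(t)-E^n(s^{*,n}(t))\right]-R^n(t),
\]
where $R^n(t)\ge 0$ counts the window arrivals that have already abandoned by $t$; when the queue is empty both sides vanish and the identity is trivial.

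First I would dispose of $R^n$. By Proposition~\ref{prop:waiting-time-stoch-bound} the window width $t-s^{*,n}(t)=\omega^n(s^{*,n}(t))\le\|\omega^n\|_T$ is stochastically of order $n^{-1/2}$, so the window carries $O(\sqrt n)$ arrivals, each having elapsed wait at most $\|\omega^n\|_T$ and hence abandonment probability at most $F^n(\|\omega^n\|_T)$; since $\sqrt n\,F^n(\Gamma/\sqrt n)\to f(\Gamma)$ by \eqref{eq:pat-distr-scale}, this probability is of order $n^{-1/2}$, so $\sup_{t\le T}R^n(t)$ is stochastically bounded and $\sup_{t\le T}R^n(t)/\sqrt n\Rightarrow 0$. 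Scaling the arrival increment,
\[
\frac{1}{\sqrt n}\left[E^n(t)-E^n(s^{*,n}(t))\right]
=\frac{\lambda^n}{n}\,\tilde\omega^n(s^{*,n}(t))
+\left[\tilde E^n(t)-\tilde E^n(s^{*,n}(t))\right].
\]
Because $\{\tilde E^n\}$ is $C$-tight by \eqref{eq:arrival-diffusion} and the window width tends to zero, the oscillation term vanishes uniformly; combined with $\lambda^n/n\to\mu$ from \eqref{eq:lambda-limit} and the stochastic boundedness of $\tilde\omega^n$, this yields the \emph{backward relation}
\[
\sup_{0\le t\le T}\left|\tilde Q^n(t)-\mu\,\tilde\omega^n(s^{*,n}(t))\right|\Rightarrow 0 .
\]

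Re-indexing through $\phi^n$ (replacing $t$ by $t+\omega^n(t)$, for which $s^{*,n}(t+\omega^n(t))=t$) converts this into
\[
\sup_{0\le t\le T}\left|\mu\,\tilde\omega^n(t)-\tilde Q^n(t+\omega^n(t))\right|\Rightarrow 0 ,
\]
so it remains to replace $\tilde Q^n(t+\omega^n(t))$ by $\tilde Q^n(t)$. This is exactly where assumption \eqref{eq:modular-cont} is indispensable: the shift $\omega^n(t)\le\|\omega^n\|_T$ is random but tends to zero by Proposition~\ref{prop:waiting-time-stoch-bound}, whence
\[
\sup_{0\le t\le T}\left|\tilde Q^n(t+\omega^n(t))-\tilde Q^n(t)\right|
\le \sup_{\substack{s,u\in[0,T+1]\\ |s-u|\le\|\omega^n\|_T}}\left|\tilde Q^n(s)-\tilde Q^n(u)\right|\Rightarrow 0
\]
by the asymptotically vanishing modulus of continuity. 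Combining the last three displays gives the claim. (That $t+\omega^n(t)$ may exceed $T$ is handled by running the argument on $[0,T+1]$, which is legitimate since every hypothesis is posited for all horizons.)

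I expect the main obstacle to be the final passage above. Unlike \cite{DaiHe2010}, where stochastic boundedness alone closes the argument, here the virtual waiting time enters as a random time shift inside the (generally nonlinear) dynamics, and only the modulus-of-continuity control \eqref{eq:modular-cont} keeps $\tilde Q^n(t+\omega^n(t))$ and $\tilde Q^n(t)$ together; this is precisely the additional ingredient the general scaling demands. A secondary, more tedious difficulty is making the FIFO identity rigorous, namely justifying the monotonicity of $\phi^n$ and the identification $\omega^n_i=\omega^n(\tau^n_i)$ in the presence of abandonment and bounding $R^n$ uniformly in $t$ (for which a maximal bound over arrival epochs using the local Lipschitz estimate \eqref{eq:Lipschitz} is the natural tool). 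Neither step is conceptually deep, but their interplay — the identity feeds the backward relation, which after re-indexing must be closed using the tightness of the very process it describes — is what makes the argument more demanding than its no-scaling predecessor.
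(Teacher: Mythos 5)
Your overall skeleton coincides with the paper's: both arguments reduce the claim to $\sup_{0\le t\le T}|\mu\tilde\omega^n(t)-\tilde Q^n(t+\omega^n(t))|\Rightarrow 0$ and then close by combining the modulus-of-continuity condition \eqref{eq:modular-cont} with $\sup_{t\le T}\omega^n(t)\Rightarrow 0$ (Corollary~\ref{corollary:abandonmentcount}), and you correctly identify that this last passage is exactly where \eqref{eq:modular-cont} is indispensable. However, your route to the key display has a genuine gap: the re-indexing identity $s^{*,n}(t+\omega^n(t))=t$ is false in general. The map $\phi^n(s)=s+\omega^n(s)$ is a nondecreasing \emph{step} function: it is constant on every interarrival interval (a service entry or an abandonment ahead of the virtual customer does not change that customer's service-entry epoch), and it even stays constant across arrival epochs of customers who later abandon. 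Consequently $s^{*,n}(\phi^n(t))$ can undershoot $t$ by the full length of the flat stretch of $\phi^n$ containing $t$; since $\omega^n$ decreases at slope $-1$ along a flat stretch, the error you incur is $\sqrt n\,|t-s^{*,n}(\phi^n(t))|$, i.e.\ $\sqrt n$ times that flat-stretch length. A priori this length is bounded only by $\|\omega^n\|_T=O_P(n^{-1/2})$ (Proposition~\ref{prop:waiting-time-stoch-bound}), so the error is $O_P(1)$ at diffusion scale, not $o_P(1)$. Repairing this requires showing that $\sqrt n$ times the \emph{maximal} flat stretch vanishes, which in turn needs control of both the maximal interarrival gap (obtainable from the $C$-tightness of $\tilde E^n$ in \eqref{eq:arrival-diffusion}) and the maximal run of consecutive arrivals who all abandon; your proposal addresses neither. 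The paper sidesteps the inverse map entirely: by the very definition of the virtual waiting time, the arrivals in the \emph{forward} window $(t,t+\omega^n(t)]$ are, up to one boundary customer and those who abandon, precisely the customers in queue at time $(t+\omega^n(t))-$, which is the sandwich inequality \eqref{eq:tech-basic-1}; no inversion or re-indexing is ever performed.

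A secondary weakness is your disposal of $R^n$: the assertion that each window arrival "has abandonment probability at most $F^n(\|\omega^n\|_T)$" conflates a random, sample-path quantity with a probability and ignores the dependence among the window, the offered waits, and the abandonment indicators. The rigorous version is exactly the paper's two-part split: the centered indicators $\id{\gamma^n_i\le\omega^n_i}-F^n(\omega^n_i)$ vanish uniformly by the martingale-based Lemma~\ref{lemma:centralizedindicationfunction}, while the compensator sum is bounded by the fluid arrival increment over the window times $\sup_{i\le E^n(T)}\sqrt n F^n(\omega^n_i)$, which is stochastically bounded by Lemma~\ref{lemma:waitingfunction} (cf.\ \eqref{eq:tech-ineq-arr-abd} and \eqref{2012-april-08-2}--\eqref{2012-april-08-7}). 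Your backward identity also silently drops the initial customers in queue (who are not counted by $E^n$), a boundary issue the forward formulation never encounters. These secondary points are fixable with the paper's lemmas, but the re-indexing step is the substantive flaw.
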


\begin{remark}\label{workload-tight}
By the above proposition and the triangle inequality, for any $s,t\in [0, T]$,
\begin{equation*}
  \Big|\tilde{\omega}^n(t)-\tilde{\omega}^n(s)\Big|
  \leq  2\sup_{0\leq t\leq T}\Big|\tilde{\omega}^n(t)-\frac{1}{\mu}\tilde{Q}^n(t)\Big|
  + \frac{1}{\mu}\Big|\tilde{Q}^n(t)-\tilde{Q}^n(s)\Big|.
\end{equation*}
Thus, the $C$-tightness of $\{\tilde Q^n, n\in\Z_+\}$ implies the $C$-tightness of $\{\tilde w^n, n\in\Z_+\}$.
\end{remark}

\begin{remark}
  The same result has been proved by {\rm \cite{TalrejaWhitt2009}} under different assumptions.
  Theorem~$3.1$ in {\rm \cite{TalrejaWhitt2009}} requires the convergence of several scaled processes including those describing arrival, service completion, abandonment and total number of customers in the system, while our result only needs the convergence of the arrival processes and $C$-tightness of the queue length processes.
\end{remark}

\begin{proof}[Proof of Theorem \ref{theo:abandoncouple}] First
consider $C$-tightness of $\{\ds G, n\in \Z_+\}$. According to
Proposition \ref{prop:abandon-patience-relation}, it is enough to
show the $C$-tightness for $\{\frac{1}{\sqrt{n}}\sum_{i=1}^{E^n(t)}
F^{n}(\omega^{n}_i), n\in \Z_+\}$. Define the fluid-scaled arrival
process $\fs E=\{\fs E(t): t\geq 0\}$ by
\begin{equation*}
  \fs E(t)=\frac{E^n(t)}{n}.
\end{equation*}
Condition \eqref{eq:arrival-diffusion} implies that as $n\to\infty$,
\begin{equation}\label{eq:arrival-fluid}
  \fs E\Rightarrow \bar e,
\end{equation}
where $\bar e(t)=\mu t$.
Note that for any $0\le s\le t\le T$,
\begin{equation*}
  \frac{1}{\sqrt{n}}\sum_{i=E^n(s)+1}^{E^n(t)} F^{n}(\omega^{n}_i)
  \leq
  \left[\bar{E}^n(s)-\bar{E}^n(t)\right]
  \cdot \sup_{0\leq i\leq E^n(T)}\sqrt{n}F^{n}(\omega^{n}_i).
\end{equation*}
So the $C$-tightness follows from the $C$-tightness of $\bar E^n$
(due to \eqref{eq:arrival-fluid}) and the stochastic boundedness of
$\sup_{0\leq i\leq E^n(T)}\sqrt{n}F^{n}(\omega^{n}_i)$ (due to
Proposition \ref{prop:waiting-time-stoch-bound}).

Next we look at (\ref{equation:abandoncouple}). According to
Proposition \ref{prop:abandon-patience-relation}, it is enough to
prove that  as $n\rightarrow \infty$,
\begin{equation}
  \sup\limits_{0\leq t\leq T} \Big|    \frac{1}{\sqrt{n}}
  \sum\limits_{j=1}^{E^n(t)}F^n(\omega^{n}_j)-\mu\int_0^tf(\frac{1}{\mu}\tilde{Q}^n(s))
  ds
    \Big|\Rightarrow 0.
\end{equation}
After adding and subtracting a new term, we have
\begin{equation*}
\begin{split}
&\frac{1}{\sqrt{n}}\sum\limits_{j=1}^{E^n(t)}F^n(\omega^{n}_j)-
\mu \int_0^tf (\frac{1}{\mu }\tilde{Q}^n(s)) ds\\
& \ \ \ =\frac{1}{\sqrt{n}}\sum\limits_{j=1}^{E^n (t)}F^n (\omega^{n}_j)-
\mu \int_0^tf (\tilde{\omega}^{n}(s)) ds +\mu \int_0^tf (\tilde{\omega}^{n}(s)) ds-\mu \int_0^tf (\frac{1}{\mu }\tilde{Q}^n (s)) ds.
\end{split}
\end{equation*}
Thus, it is enough to prove that when $n\rightarrow \infty$,
\begin{align}
  \label{equa}
  \sup\limits_{0\leq t\leq T}
  \Big|\int_0^tf (\tilde{\omega}^{n}(s)) ds
    -\int_0^tf (\frac{1}{\mu }\tilde{Q}^n (s)) ds
  \Big|&\Rightarrow 0, \\
  \label{relation1}
  \sup\limits_{0\leq t\leq T}
  \Big|\frac{1}{\sqrt{n}}\sum\limits_{j=1}^{E^n (t)}F^n (\omega^{n}_j)
    -\mu\int_0^tf (\tilde{\omega}^{n}(s)) ds
  \Big|&\Rightarrow 0.
\end{align}

We first  prove \eqref{equa}. By assumption~\eqref{eq:stoc-bdd} and
Proposition~\ref{prop:waiting-time-stoch-bound}, for any
$\varepsilon>0$, there exists $\Gamma$ large enough such that for all
large enough $n$,
\begin{equation}
  \mathbb{P}\Big\{\sup_{0\leq t\leq T}\frac{1}{\mu}\tilde{Q}^n(t)\geq \Gamma\Big\}
  + \mathbb{P}\Big\{\sup_{0\leq t\leq T}\tilde{\omega}^n(t)\geq \Gamma\Big\}
  \leq \frac{\varepsilon}{2}.\label{eq:tech-prob-bound-8-6}
\end{equation}
For any $\delta>0$, by the local Lipschitz continuity of
 $f(\cdot)$ given by (\ref{eq:Lipschitz}), we have
\begin{equation*}
\begin{split}
     &\quad\mathbb{P}\Big\{\sup\limits_{0\leq t\leq T}
     \Big|\int_0^tf (\tilde{\omega}^{n}(s)) ds
       -\int_0^tf (\frac{1}{\mu }\tilde{Q}^n (s)) ds
     \Big|\geq \delta\Big\}\\
     & \ \ \ \leq \mathbb{P}\Big\{\sup_{0\leq t\leq T}\frac{1}{\mu}\tilde{Q}^n(t)\geq \Gamma\Big\}
     + \mathbb{P}\Big\{\sup_{0\leq t\leq T}\tilde{\omega}^n(t)\geq \Gamma\Big\}
     +\mathbb{P}\Big\{\sup\limits_{0\leq t\leq T}
     \Big|\tilde{\omega}^{n}(t)-\frac{1}{\mu}\tilde{Q}^n (t)\Big|
     \geq \frac{\delta}{T\Lambda_{\Gamma}}\Big\}.
\end{split}
\end{equation*}
By Proposition~\ref{prop:little-law}, the third term on the
right-hand side in the above can be less than ${\varepsilon}/{2}$
for all large enough $n$. Thus \eqref{equa} is proved by
\eqref{eq:tech-prob-bound-8-6}.

Next, we prove \eqref{relation1}. According to Lemma 3.2  of \cite{DaiHe2010} and the monotonicity of the distribution function $F^n(\cdot)$,
\begin{equation*}
  \int_0^t\sqrt{n}F^n (\frac{1}{\sqrt{n}}\tilde{\omega}^{n}(s-)) d\bar{E}^n (s)
  \leq
  \frac{1}{\sqrt{n}}\sum\limits_{j=1}^{E^n (t)}F^n (\omega^{n}_j)
  \leq
  \int_0^t\sqrt{n}F^n (\frac{1}{\sqrt{n}}\tilde{\omega}^{n}(s)) d\bar{E}^n (s).
\end{equation*}
As a result, it is sufficient to prove the following convergence as $n\to\infty$,
\addtocounter{equation}{1}
\begin{align}
  \label{relation2}\tag{\theequation a}
 \sup\limits_{0\leq t\leq T}\Big|
  \int_0^{ t}\sqrt{n}F^n (\frac{1}{\sqrt{n}}\tilde{\omega}^{n}(s)) d\bar{E}^n (s)
  -\mu \int_0^{ t}f (\tilde{\omega}^{n}(s)) ds
  \Big|\Rightarrow 0, \\
  \label{relation}\tag{\theequation b}
   \sup\limits_{0\leq t\leq T}\Big|
  \int_0^{ t}\sqrt{n}F^n (\frac{1}{\sqrt{n}}\tilde{\omega}^{n}(s-))d\bar{E}^n (s)
  -\mu \int_0^{ t}f (\tilde{\omega}^{n}(s)) ds
  \Big| \Rightarrow 0.
\end{align}

We only prove \eqref{relation2} since \eqref{relation} can be proved
similarly. The idea is similar to the one  proposed by
\cite{WardGlynn2005}. By Remark \ref{workload-tight} and
(\ref{eq:arrival-fluid}), we have that
$\{(\tilde{\omega}^{n},\bar{E}^n),n\in\Z_+\}$ is $C$-tight. So for
every convergent subsequence indexed by $n_k$,
\begin{equation*}
  (\tilde{\omega}^{n_k},\bar{E}^{n_k} )\Rightarrow (\tilde{\omega} , \bar e)
  \quad\textrm{as }n_k\to\infty,
\end{equation*}
for some process $\tilde{\omega}\in \C(\R_+, \R)$.
By the Skorohod representation theorem, there  exists another
probability space $(\breve{\Omega}, \breve{\mathcal {F}},
\breve{\mathbb{P}})$, as well as a sequence of processes
$(\breve{\omega}^{n_k},\breve{E}^{n_k} )$ and $(\breve{\omega} ,\bar
e)$ defined on it, such that
\begin{align*}
(\breve{\omega}^{n_k}, \breve{E}^{n_k} )&\stackrel{d}{=}(\tilde{\omega}^{n_k},\bar{E}^{n_k} ),\\
(\breve{\omega} ,\bar  e)&\stackrel{d}{=}(\tilde{\omega} ,\bar e),
\end{align*}
and with probability one, $\breve{\omega}^{n_k}$ converges to
$\breve{\omega}$  in $\D(\R_+, \R)$ and $\breve{E}^{n_k}$ converges
to $\bar e$ in $\D(\R_+, \R)$.
We have that for any $T\geq 0$,
\begin{align}
  &\quad \sup\limits_{0\leq t\leq T}\Big|
  \int_0^{t}\sqrt{n_k}F^{n_k} (\frac{1}{\sqrt{n_k}}\tilde{\omega}^{n_k}(s)) d\bar{E}^{n_k} (s)
  -\mu \int_0^{t}f (\tilde{\omega}^{n_k}(s)) ds
  \Big|\nonumber\\
  & \ \ \stackrel{d}{=}
    \quad \sup\limits_{0\leq t\leq T}\Big|
    \int_0^t
        \Big(
          \sqrt{n_k}F^{n_k} (\frac{1}{\sqrt{n_k}}\breve{\omega}^{n_k}(s))
          -f(\breve{\omega}^{n_k}(s))
        \Big)
        d\breve{E}^{n_k}(s)
  \Big|\label{equation:difference}\\
      & \ \ \ +\sup_{0\leq t \leq T} \Big|
  \int_0^t f(\breve{\omega}^{n_k}(s))
  d\breve{E}^{n_k}(s)
  - \mu  \int_0^tf (\breve{\omega} (s))ds
  \Big|+
  \mu\sup_{0\leq t \leq T}\Big|
  \int_0^tf (\breve{\omega}^{n_k}(s))ds -\int_0^tf (\breve{\omega} (s))ds
  \Big|.\nonumber
\end{align}
There exist $N_1$ and $M$ such that when $n_k\geq N_1$,
\begin{equation*}
  \sup_{0\leq t\leq T}|\breve{E}^{n_k}(t)|\leq 2 \mu T, \quad \sup_{0\leq t\leq T}|\breve{\omega}^{n_k}(t)|\leq M.
\end{equation*}
As a result of Lemma~4.1 of \cite{Dai1995a} and Condition~\eqref{eq:pat-distr-scale},  $\sqrt{n_k}F^{n_k} (\frac{x}{\sqrt{n_k}})$ converge to $f(x)$ uniformly on compact sets.
Thus, for any given $\varepsilon>0$, we can find an $N_2$ such that when $n_k\geq N_2$,
\begin{align*}
  \sup_{0\leq x\leq M}
  \left|
    \sqrt{n_k}F^{n_k} (\frac{x}{\sqrt{n_k}})
    -f(x)
  \right|
  \leq \frac{\varepsilon}{2\mu T}.
\end{align*}
So we can conclude that
\begin{align*}
      \sup_{0\leq t\leq T}
      \bigg|
        \int_0^t
        \Big(
          \sqrt{n_k}F^{n_k} (\frac{1}{\sqrt{n_k}}\breve{\omega}^{n_k}(s))
          -f(\breve{\omega}^{n_k}(s))
        \Big)
        d\breve{E}^{n_k}(s)
      \bigg| \leq \varepsilon,
\end{align*}
for all $n_k\geq \max(N_1, N_2)$.
This proves that the first term in \eqref{equation:difference} converges to 0.
By the continuous mapping theorem and \eqref{eq:pat-distr-scale}, we also know that with probability one, $f(\breve{\omega}^{n_k})$ converges to $f(\breve{\omega})$ as $n_k\to\infty$ in $\D(\R_+, \R)$.
By Lemma 8.3 of \cite{DaiDai1999}, we know that with probability one, as
$n_k\to\infty$,
\begin{align*}
  \sup_{0\leq t \leq T} \Big|
  \int_0^t f(\breve{\omega}^{n_k}(s))
  d\breve{E}^{n_k}(s)
  - \mu  \int_0^tf (\breve{\omega} (s))ds
  \Big|&\to 0,\\
  \sup_{0\leq t \leq T}\Big|
  \int_0^tf (\breve{\omega}^{n_k}(s))ds -\int_0^tf (\breve{\omega} (s))ds
  \Big|& \to 0.
\end{align*}
As a result, with probability one, as $n_k\to\infty$,
\begin{equation}\label{eq:tech-skorohod-mapping-1}
  \sup_{0\leq t \leq T}\Big|
  \int_0^t\sqrt{n_k}F^{n_k} (\frac{1}{\sqrt{n_k}}\breve{\omega}^{n_k}(s))
  d\breve{E}^{n_k}(s)-
  \mu \int_0^tf (\breve{\omega}^{n_k}(s))ds
  \Big| \to 0.
\end{equation}
Since $(\breve{\omega}^{n_k}, \breve{E}^{n_k})\stackrel{d}{=}(\tilde{\omega}^{n_k},\bar{E}^{n_k} )$, we have
\begin{equation*}
  \begin{split}
    &\quad\sqrt{n_k}\int_0^t
    F^{n_k} (\frac{1}{\sqrt{n_k}}\breve{\omega}^{n_k}(s))d\breve{E}^{n_k} (s)
    -\mu \int_0^tf (\breve{\omega}^{n_k}(s))ds\\
    & \ \ \ \ \ \ \stackrel{d}{=}
    \sqrt{n_k}\int_0^tF^{n_k} (\frac{1}{\sqrt{n_k}}\breve\omega^{n_k}(s))d\bar{E}^{n_k} (s)
    -\mu \int_0^tf (\tilde{\omega}^{n_k}(s))ds.
  \end{split}
\end{equation*}
Hence, \eqref{eq:tech-skorohod-mapping-1} implies that as $k\to\infty$,
\begin{equation*}
  \sup_{0\leq t \leq T}\Big|
  \sqrt{n_k}\int_0^tF^{n_k} (\frac{1}{\sqrt{n_k}}\breve\omega^{n_k}(s))d\bar{E}^{n_k} (s)
  -\mu \int_0^tf (\tilde{\omega}^{n_k}(s))ds
  \Big|\Rightarrow 0.
\end{equation*}
Since the above convergence to zero holds for all convergent
 subsequences, \eqref{relation2} is established.
\end{proof}

\begin{proof} [Proof of  Corollary
\ref{corollary:abandon-limit}] If
$F^n(x)=1-\exp(-\int_0^xh(\sqrt{n}s)ds)$, we have
$f(x)=\int_0^xh(s)ds$. Hence,
\[
\mu\int_0^tf(\frac{1}{\mu}\tilde{Q}^n(s)) ds
=\mu\int^t_0\int^{\tilde Q^n(s)/\mu}_0 h(u)duds= \int^t_0
\int^{\tilde Q^n(s)}_0 h\Big(\frac{u}{\mu}\Big) du ds.
\]
This, by Theorem \ref{theo:abandoncouple}, implies (i)

Now we prove (ii). Note that if $F^n(x)=F(x)$ with derivative
$F'(0+)$ at $x=0$, then $f(x)=F'(0+)x$. It follows from Theorem
\ref{theo:abandoncouple} that
\begin{equation}
    \label{equation:abandoncouple-limit-3}
    \sup_{0\leq t\leq T}\Big|
      \tilde{G}^n(t)- F'(0+)\int^t_0 \tilde Q^n(s) ds
      \Big|
      \Rightarrow 0,\quad \textrm{as}\ n\rightarrow\infty.
  \end{equation}
\end{proof}

\begin{proof} [Proof of Theorem \ref{thm:unified-reflection}]
To better understand the proof, we depict its logic in
Figure~\ref{fig:road-map}.
% ----------------------------- begin of tikz picture -------------------------------
\begin{figure}[h]
\centering \tikzstyle{block} = [draw, rectangle, text centered, text
width=5.6em,
    minimum height=4em, minimum width=5em]

\begin{tikzpicture}[auto, node distance=2cm,>=latex']
    \node [block] (s1) {\small Stochastic Boundedness of Queue};
    \node [block, right of=s1, node distance=5.1cm] (s2) {\small Tightness of Abandonment Processes};
    \node [block, right of=s2, node distance=6.6cm] (s3) {\small Tightness of Queue};
    \node [block, below of=s3, node distance=3.2cm] (s4) {\small Asymptotic Relationship \eqref{equation:abandoncouple}};
    \node [block, left of=s4, node distance=6.6cm] (s5) {\small Diffusion Approximation};

    \draw [->] (s1) -- node {(a)} (s2);
    \draw [->] (s2) -- node [name=y] {(b)} (s3);
    \draw [->] (s3) -- node [name=y] {Theorem~\ref{theo:abandoncouple}} (s4);
    \draw [->] (s4) -- node [name=y] {(c)} (s5);
\end{tikzpicture}
 \caption{ The approach to the diffusion approximation.}
 \label{fig:road-map}
\end{figure}
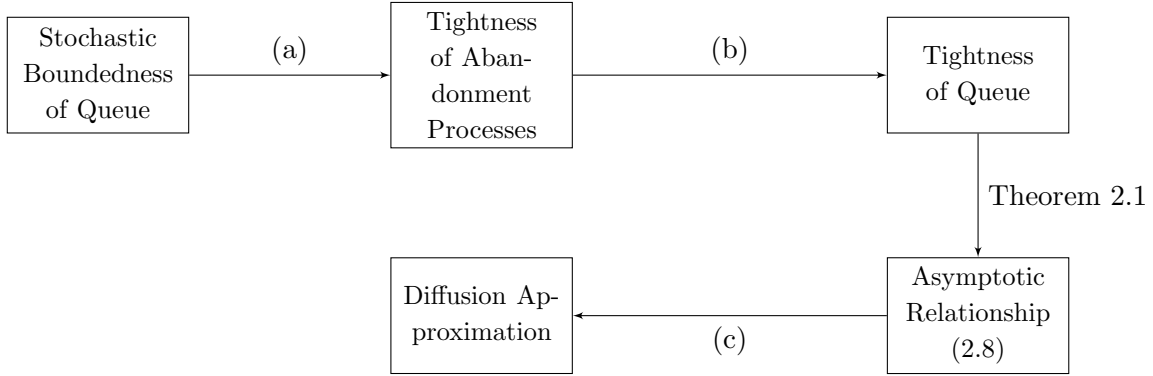
% ----------------------------- end of tikz picture -------------------------------

Condition (i), together with Theorem \ref{theo:abandoncouple},
implies that the abandonment process $\{\tilde G^n, n\in \Z_+\}$ is
$C$-tight (see arrow (a) in Figure~\ref{fig:road-map}). Further, the
continuity of $f(\cdot)$ implies that the sequence of processes
given by the second term on the right-hand side of \eqref{eq:G-hat}
is also $C$-tight. So according to the definition given by
\eqref{eq:G-hat}, $\{\tilde G_c^n, n\in \Z_+\}$ is $C$-tight. By
condition (ii), $\{\ds Y-\tilde G_c^n, n\in \Z_+\}$ is also
$C$-tight.
%It follows from condition (iii) that  $\{\ds X, n\in \Z_+\}$ is $C$-tight.    A direct consequence is the $C$-tightness of the queue length processes $\{\tilde{Q}^{n}, n\in \Z_+\}$ (see arrow (b) in Figure~\ref{fig:road-map}).
%In other words, $\{\tilde Q^n, n\in \Z_+\}$ satisfies condition \eqref{eq:modular-cont}. Hence,
 %\[ \ds Y-\tilde G_c^n \Rightarrow \tilde Y \]
% directly follows from Theorem~\ref{theo:abandoncouple}. Therefore, by condition (iii) again, we have (\ref{eq:X-conv}) (see arrow (c) in Figure~\ref{fig:road-map}). This completes the proof.
%First, condition (i) together with Corollary~\ref{corollary:abandon-count} implies that the abandonment process $\{\tilde G^n, n\in \Z_+\}$ is also $C$-tight. This is arrow (a) in Figure~\ref{fig:road-map}.
%The continuity of $f(\cdot)$ implies that $\left\{\mu\int_0^t f\Big(\frac{1}{\mu}(\ds X(s))^+\Big) ds, n\in\mathbb{Z}_+\right\}$ is $C$-tight.
% $\{\mu\int_0^t f((\ds X(s))^+/\mu)ds, n\in \Z_+\}$
%Then from \eqref{eq:G-hat}, $\{\tilde G_c^n, n\in \Z_+\}$ is $C$-tight. Together with condition (ii), $\{\ds Y-\tilde G_c^n, n\in \Z_+\}$ is also $C$-tight.
Then, for any subsequence $\{n_k, k\in \Z_+ \}$, we can find another
subsequence with indices  $\{n'_k, k\in \Z_+\}\subseteq \{n_k, k\in
\Z_+\}$, such that
\begin{equation}
  \label{eq:convergence-Y-n-k}
  \tilde{Y}^{n^\prime_k} - \tilde{G}^{n^\prime_k}_c
  \dto
  \tilde Y^\prime \ \ \textrm{as }\ n^\prime_k\to\infty,
\end{equation}
in the Skorohod $J_1$-topology for some limit $\tilde
Y^\prime\in\C(\R_+,\R)$. Recalling that if $x(\cdot)\in
\C(\R_+,\R)$, then $x_n\rightarrow x$ in the Skorohod $J_1$-topology
is equivalent to $x_n\rightarrow x$ in the uniform topology. Thus
$\Phi(\cdot)$ in condition (iii) is continuous on $\C(\R_+,\R)$ under the Skorohod
$J_1$-topology. Denote by $D_{\Phi}$ the set of discontinuous points
of $\Phi(\cdot)$ in the Skorohod $J_1$-topology. Then $\D(\R_+,\R)\setminus
\C(\R_+,\R)\supseteq D_{\Phi}$. By the condition that $\tilde
Y^\prime\in\C(\R_+,\R)$, we have $\mathbb{P}(\tilde Y^\prime\in
D_{\Phi})=0$. So the measurability of $\Phi(\cdot)$, together with
the continuous mapping theorem (see Theorem~2.7 in
\cite{Billingsley1999}, page 21) and \eqref{eq:convergence-Y-n-k},
yields
\begin{equation}
\tilde{X}^{n^\prime_k}=  \Phi(\tilde{Y}^{n^\prime_k} -
\tilde{G}^{n^\prime_k}_c)
  \dto
  \Phi(\tilde Y^\prime) \ \ \textrm{as }\ n^\prime_k\to\infty,
\end{equation}
in the Skorohod $J_1$-topology. This and $\Phi(\tilde
Y^\prime)\subseteq  \C(\R_+,\R)$ (see condition (iii)) show that
$\{\tilde X^{n_k^\prime}, k\in \Z_+\}$ is $C$-tight.    A direct
consequence is the $C$-tightness of the queue length processes
$\{\tilde{Q}^{n_k^\prime}, k\in \Z_+\}$ (see arrow (b) in
Figure~\ref{fig:road-map}). In other words, $\{\tilde
Q^{n_k^\prime}, k\in \Z_+\}$ satisfies condition
\eqref{eq:modular-cont}. From Theorem~\ref{theo:abandoncouple} and
(ii), we know $\tilde Y^\prime = \tilde Y.$
 Therefore, in view of the arbitrariness of the subsequence of $\{n_k, k\in \Z_+ \}$, we have
\begin{equation*}
  \tilde{X}^{n}\Rightarrow \Phi(\tilde{Y})
  \textrm{ \ as \ }n\rightarrow \infty
\end{equation*}
in the Skorohod $J_1$-topology (see arrow (c) in
Figure~\ref{fig:road-map}). This completes the proof.
\end{proof}

\section*{Acknowledgement}
We thank Jim Dai and Shuangchi He for suggesting the proof for Proposition~\ref{prop:waiting-time-stoch-bound}.
The research is supported by a start-up grant from NUS Business School and GRF grants (Projects No.\ 24500314, 622110 \& 622411) from the Hong Kong Research Grants Council.
We also thank the Associate Editor and two anonymous referees for their valuable suggestions which lead to Subsection~\ref{sec:NDS} and the simplification of the proof of Lemma~\ref{lemma:anadonment-CLT}.

% -------- bibliography --------
\bibliography{pub}

\appendix

\section{Regulator Mappings}
\label{mapping}

% ----------------------------------------------------------------------

% In this section, we prove Lemmas~\ref{lem:mapping-abd-single}--\ref{lem:equivalent}. Lemmas~\ref{lem:mapping-abd-single} and \ref{lem:mapping-abd-HW}
% are based on the following result.

In this section, we prove Lemmas~\ref{lem:mapping-abd-single} and \ref{lem:mapping-abd-HW} based on the following result.
\begin{lemma}\label{Lem:regulator}
  Assume that $\Upsilon: \D(\R_+,\R)\rightarrow \D(\R_+,\R)$ is measurable under the Skorohod $J_1$-topology, Lipschitz continuous under
the topology of uniform convergence over bounded intervals, and $\Upsilon(0)=0$,  and $h(\cdot)$ is  a Lipschitz continuous function on $\R$ with $h(0)=0$. Then for $y(\cdot)\in \D(\R_+,\R)$,
\begin{equation}\label{eq:mapping-ab}
  x(t)=y(t)+\int_0^t h(\Upsilon(x)(s))ds
\end{equation}
has a unique solution (denoted by $x=\Xi_{\Upsilon,h}(y)$). The mapping $\Xi_{\Upsilon,h}: \D(\R_+,\R)\rightarrow \D(\R_+,\R)$ is Lipschitz continuous under the topology of uniform convergence over bounded intervals, and measurable under the Skorohod $J_1$-topology, and  $\Xi_{\Upsilon,h}\left(\C(\R_+,\R)\right)\subseteq \C(\R_+,\R)$.
\end{lemma}

\begin{proof}
We will prove this lemma in three steps:
\begin{enumerate}
\item[(a)] the existence  and uniqueness of the solution to \eqref{eq:mapping-ab};
\item[(b)] $\Xi_{\Upsilon,h}$ is Lipschitz continuous with the topology of uniform convergence over bounded intervals;
\item[(c)] $\Xi_{\Upsilon,h}$ is measurable with respect to the Borel $\sigma$-field generated by the Skorohod $J_1$-topology.
\end{enumerate}
We focus our analysis on the bounded interval $[0,T]$ for some
$T>0$. Let $\Lambda^h$ be the Lipschitz constant of $h(\cdot)$  and $\Lambda_T^{\Upsilon}$ be the Lipschitz constant of $\Upsilon(\cdot)$ on the
interval $[0, T]$. Let $\delta=2/(3\Lambda^{\Upsilon}_T\Lambda^h)$.

\paragraph{Proof of (a):}
 We first show the existence of a solution. Define $u_0(\cdot)=0$ and $u_n(\cdot)$
iteratively by
\begin{equation*}
  u_{n+1}(t)=y(t)+\int_0^t h(\Upsilon(u_n)(s))ds
\end{equation*}
for all $n\geq 0$. Then
\begin{equation}\label{2013-Aug-1}
  \begin{split}
    u_{n+1}(t)-u_n(t)
    &=\int_0^t\big[h(\Upsilon(u_n)(s))-h(\Upsilon(u_{n-1})(s))\big]ds.
  \end{split}
\end{equation}
Now we will show that
\begin{equation}
  \label{equa:induction}
  \|u_{n+1}-u_n\|_{j\delta}\leq j^jn^j(\frac{2}{3})^n\|y\|_{(\fl{\delta^{-1}T}+1)\delta}
  \textrm{ \ for \ } j=1,2\cdots, \fl{\delta^{-1}T}+1.
\end{equation}

For $j=1$,
  \begin{equation*}
    \|u_{n+1}-u_n\|_{\delta}
    \leq  \Lambda^h\Lambda_T^{\Upsilon}\|u_n-u_{n-1}\|_{\delta}\times \delta
    \leq \frac{2}{3}\|u_n-u_{n-1}\|_{\delta}.
  \end{equation*}
  Since $h(0)=0$ and $\Upsilon(0)=0$, we have $\|u_1-u_0\|_{\delta}=\|y\|_{\delta}\leq \|y\|_{(\fl{\delta^{-1}T}+1)\delta}$. As a result,
  \begin{equation*}
    \|u_{n+1}-u_n\|_{\delta}
    \leq (\frac{2}{3})^n\|y\|_{(\fl{\delta^{-1}T}+1)\delta}
    \leq n(\frac{2}{3})^n\|y\|_{(\fl{\delta^{-1}T}+1)\delta}.
  \end{equation*}

Now assume that we have proved \eqref{equa:induction} for $j\leq k$.
Then for $j=k+1$, by (\ref{2013-Aug-1}),
  \begin{align*}
    \|u_{n+1}-u_n\|_{(k+1)\delta}
      &\leq \sum_{j=1}^k  \Lambda^h\Lambda_T^{\Upsilon}\delta\|u_n-u_{n-1}\|_{j\delta}
      + \Lambda^h\Lambda_T^{\Upsilon}\delta\|u_n-u_{n-1}\|_{(k+1)\delta}\\
      &= \sum_{j=1}^k \frac{2}{3}\|u_n-u_{n-1}\|_{j\delta}
      +\frac{2}{3}\|u_n-u_{n-1}\|_{(k+1)\delta} \\
      &\leq \sum_{j=1}^k \frac{2}{3}j^j(n-1)^j(\frac{2}{3})^{n-1}\|y\|_{(\fl{\delta^{-1}T}+1)\delta}
      +\frac{2}{3}\|u_n-u_{n-1}\|_{(k+1)\delta} \\
      &\leq k^{k+1}n^k(\frac{2}{3})^n\|y\|_{(\fl{\delta^{-1}T}+1)\delta}
      +\frac{2}{3}\|u_n-u_{n-1}\|_{(k+1)\delta}.
  \end{align*}
  Since $\|u_1-u_0\|_{(k+1)\delta}\leq \|y\|_{(\fl{\delta^{-1}T}+1)\delta}$, we have
  \begin{equation*}
    \begin{split}
      \|u_{n+1}-u_n\|_{(k+1)\delta}
      &\leq k^{k+1}(\sum_{i=0}^ni^k)(\frac{2}{3})^n\|y\|_{(\fl{\delta^{-1}T}+1)\delta}\\
      &\leq
      (k+1)^{k+1}n^{k+1}(\frac{2}{3})^n\|y\|_{(\fl{\delta^{-1}T}+1)\delta}.
    \end{split}
  \end{equation*}
Hence, we have proved \eqref{equa:induction}, which implies
\begin{equation*}
  \begin{split}
    \sum\limits_{n=1}^{\infty}\|u_{n+1}-u_n\|_T
    &\leq \sum\limits_{n=1}^{\infty}\|u_{n+1}-u_n\|_{(\fl{\delta^{-1}T}+1)\delta}\\
    &\leq \sum\limits_{n=1}^{\infty}
    (\fl{\delta^{-1}T}+1)^{\fl{\delta^{-1}T}+1}n^{\fl{\delta^{-1}T}+1}
    (\frac{2}{3})^{n}\|y\|_{(\fl{\delta^{-1}T}+1)\delta}\\
    &<\infty.
  \end{split}
\end{equation*}
Thus, $\{u_n(\cdot), n\in \Z_+\}$ is a Cauchy sequence. As $\D(\R_+,\R)$ is a Banach
Space in the uniform metric, the sequence $\{u_n(\cdot), n\in \Z_+\}$ converges to
the limit $u(\cdot)$, which is a solution to \eqref{eq:mapping-ab}.

 The uniqueness of the solution is an immediate consequence of the Lipschitz continuity of $\Xi_{\Upsilon,h}$, which we will prove next.

\paragraph{Proof of (b):}
For any $y_1(\cdot),y_2(\cdot)\in \D(\R_+,\R)$, the definition of $\delta$ and \eqref{eq:mapping-ab} also imply that
\begin{equation*}
  \|\Xi_{\Upsilon,h}(y_2)-\Xi_{\Upsilon,h}(y_1)\|_\delta
  \leq \|y_2-y_1\|_\delta+\frac{2}{3}\|\Xi_{\Upsilon,h}(y_2)-\Xi_{\Upsilon,h}(y_1)\|_\delta,
\end{equation*}
Hence, $\|\Xi_{\Upsilon,h}(y_2)-\Xi_{\Upsilon,h}(y_1)\|_\delta\leq 3\|y_2-y_1\|_\delta$.
Suppose, for $i=0,1,\ldots, k$,
\begin{equation}
  \label{eq:tech-map-induction-1}
  \|\Xi_{\Upsilon,h}(y_2)-\Xi_{\Upsilon,h}(y_1)\|_{i\delta}\leq (3i)^i\|y_2-y_1\|_{i\delta}.
\end{equation}
We now show that \eqref{eq:tech-map-induction-1} holds for $i=k+1$.
For any $t\in[0,(k+1)\delta]$, by the induction assumption and the definition of $\delta$, we have
\begin{equation*}
  \|\Xi_{\Upsilon,h}(y_2)-\Xi_{\Upsilon,h}(y_1)\|_t
  \leq \|y_2-y_1\|_t
  +\sum\limits_{i=1}^k\frac{2}{3}(3i)^i\|y_2-y_1\|_t
  +\frac{2}{3}\|\Xi_{\Upsilon,h}(y_2)-\Xi_{\Upsilon,h}(y_1)\|_t.
\end{equation*}
This implies that \eqref{eq:tech-map-induction-1} holds for $i=k+1$.
Continuing the induction until $k=\fl{\delta^{-1}T}$ yields the Lipschitz continuity property of $\Xi_{\Upsilon,h}(\cdot)$.

\paragraph{Proof of (c):}
Define
\begin{equation*}
  \Theta(y,u)(t)=y(t)+\int_0^th(\Upsilon(u)(s))ds.
\end{equation*}
First, we prove the function $\Theta(\cdot)$ is measurable with respect to the
Borel $\sigma$-field generated by the Skorohod $J_1$-topology in
$\D^2(\R_+,\R)$ and $\D(\R_+,\R)$. Define $\Pi(y,
u)(t)=y(t)+\int_0^t h(u(s))ds$. It is clear that $\Pi(\cdot)$ is
measurable (in fact, continuous) in the Skorohod $J_1$-topology. Since
$\Theta(y,u)= \Pi(y,\Upsilon(u))$ and $\Upsilon(\cdot)$ are measurable under the Skorohod $J_1$-topology, the measurability of $\Theta(\cdot)$ is
proved.

We know that $\Phi_g(y)=\lim\limits_{n\rightarrow \infty}\Theta^n(y,0)$, where $\Theta^n(\cdot)$ is iteratively defined by
\begin{equation*}
  \Theta^n(y,u)=\Theta(y,\Theta^{n-1}(y,u)),
  \quad n=1,2,\ldots,
\end{equation*}
with $\Theta^0(y,u)=u$. According to Theorem~2 on page 14 of
\cite{ChowTeicher2003}, we can prove by induction that $\Theta^n(y,0)$
is measurable for each $n$.  Since $\Xi_{\Upsilon, h}(y)$ is the
limit of $\Theta^n(y,0)$ under the topology of uniform convergence over bounded intervals,
 it is also the limit of $\Theta^n(y,0)$ under the Skorohod $J_1$-topology. By
Theorem~4.2.2 of \cite{Dudley2002}, we know that $\Xi_{\Upsilon, h}(\cdot)$ is
measurable with respect to the Borel $\sigma$-field
generated by the Skorohod $J_1$-topology.
\end{proof}

\begin{proof}[Proof of Lemma~\ref{lem:mapping-abd-single}]
Note  the special case where $g\equiv 0$ gives us the conventional Skorohod mapping.
In other words, for any $y\in \D(\R_+,\R)$ with $y(0)\geq 0$, there exists a unique $(a, b)\in \D(\R_+,\R)$ such that
\begin{equation*}
\begin{split}
    &a(t) =y(t)+b(t),\\
    &\int_0^{\infty} a(t)db(t)=0,\\
    &a(t)\geq 0, \quad \forall t\ge 0.
\end{split}
\end{equation*}
Define the mapping $(\Phi, \Psi):\D(\R_+,\R)\to \D(\R_+,\R^2)$ by $(\Phi, \Psi)(y)=(a,b)$. Then $(\Phi, \Psi)$ is
Lipschitz continuous in the topology of uniform convergence over
bounded intervals and the Skorohod $J_1$-topology.

In order to deal with the integral
equation~\eqref{2013-Nov-1}, we use Lemma~\ref{Lem:regulator} with $h(\cdot)=g(\cdot)$ and $\Upsilon(\cdot)=\Phi(\cdot)$. Then, we can obtain a mapping
$\bar{\Phi}(\cdot)$ given by $u=\bar{\Phi}(y)$ with
\begin{equation}
  u(t)=y(t)+\int_0^t g(\Phi(u)(s))ds \ \ \mbox{for $y\in \D(\R_+,\R)$}.
\end{equation}
Clearly, $(x, z)=(\Phi(\bar{\Phi}(y)), \Psi(\bar{\Phi}(y)))$ (that is, $\Phi_{g}(\cdot)=(\Phi, \Psi)\circ
\bar{\Phi}(\cdot)$) is a solution to \eqref{2013-Nov-1}. Other properties (measurability, Lipschitz continuity and $\Phi_{g}(\C(\R_+,\R))\subseteq \C(\R_+,\R)$) can easily be proved from the properties of $(\Phi, \Psi)(\cdot)$ and $\bar{\Phi}(\cdot)$.
\end{proof}

%-----------------------------------------------------------------------

\begin{proof}[Proof of Lemma~\ref{lem:mapping-abd-HW}]
Note the special case where $g(\cdot)\equiv 0$  was proved by \cite{Reed2007} (cf.\ Proposition~7 there).
In other words, for any $y(\cdot)\in \D(\R_+,\R)$, there exists a unique $u(\cdot)\in \D(\R_+,\R)$ such that
\begin{equation*}
  u(t)=y(t)+\int_0^t \left(u(t-s)\right)^-dM(s).
\end{equation*}
Define the mapping $\Phi_M(\cdot)$ by $u(\cdot)=\Phi_M(y)(\cdot)$. Then $\Phi_M(\cdot)$ is
Lipschitz continuous under the topology of uniform convergence over
bounded intervals, measurable with respect to the Borel
$\sigma$-field generated by the Skorohod $J_1$-topology.

In order to deal with the integral
equation~\eqref{eq:mapping-general},
we use Lemma~\ref{Lem:regulator} with $h(t)=g(t^+)$ for $t\in \R$, and $\Upsilon(\cdot)=\Phi_M(\cdot)$. Then, we can obtain a mapping
$\Psi_g(\cdot)$ given by $a(\cdot)=\Psi_g(y)(\cdot)$ with
\begin{equation}
  a(t)=y(t)+\int_0^t g(\left(\Phi_M(a)(s)\right)^+)ds \ \ \mbox{for $y\in \D(\R_+,\R)$}.
\end{equation}
Clearly, $x(\cdot)=\Phi_M(\Psi_g(y))(\cdot)$ (that is, $\Phi_{M,g}(\cdot)=\Phi_M \circ
\Psi_g(\cdot)$) is a solution to \eqref{eq:mapping-general}. The other properties (measurability, Lipschitz continuity, and $\Phi_{M, g}(\C(\R_+,\R))\subseteq \C(\R_+,\R)$) can easily be proved from the properties of $\Phi_M(\cdot)$ and $\Psi_g(\cdot)$.
\end{proof}

\section{Technical Proofs}

\subsection{Proof of Lemma \ref{lem:equivalent}}

\begin{proof}[Proof of Lemma \ref{lem:equivalent}]
First introduce auxiliary processes $\tilde {\cal T}^n_0=\{ \tilde {\cal T}^n_0(t): t\geq 0\}$ and $\tilde  U^n_0=\{ \tilde  U^n_0(t): t\geq 0\}$
\begin{align*}
  \tilde {\cal T}^n_{0}(t) &= \frac{1}{\sqrt n} \sum_{i=1}^{\lfloor n\mu (T+1) \rfloor}
    \Big( \id{u_i^n+\tau_i^n>t}-(1-H_{\star}(t-\tau^n_i))\Big),\displaybreak[3]\\
  \tilde {U}^n_{0}(t) &= \frac{1}{\sqrt n} \sum_{i=1}^{\lfloor n\mu (T+1) \rfloor}
    \Big( \id{u_i^n+\frac{i}{n\mu}>t}-(1-H_{\star}(t-\frac{i}{n\mu}))\Big).
\end{align*}
By the weak convergence of the empirical processes (see Chapter 3 in \cite{ShorackWellner2009}), as $n\to\infty$,
\begin{equation}
  \label{2013-12-31-1}
  \tilde  U^n_0  \Rightarrow \tilde  U,
\end{equation}
where $\tilde  U=\{\tilde U(t): t\geq 0\}$ is a Gaussian process with continuous sample  paths, zero mean  and the same covariance function as $\tilde {\cal T}$.
By \eqref{eq:C-conv-e}, we have
\begin{eqnarray}
\sup_{0\leq t\leq T}  \Big| \tilde {\cal T}^n(t)-\tilde {\cal T}^n_{0}(t)\Big|\Rightarrow 0 \ \
\mbox{and} \ \ \sup_{0\leq t\leq T}  \Big| \tilde  U^n(t)-\tilde  U^n_{0}(t)\Big|\Rightarrow 0.
\label{2014-march-24-3}
\end{eqnarray}
In view of \eqref{2013-12-31-1} and \eqref{2014-march-24-3}, to prove the lemma it remains to be shown that
\begin{equation}
  \label{2014-march-24-4}
  \sup_{0\leq t\leq T}  \Big| \tilde {\cal T}^n_0(t)-\tilde U^n_0(t)\Big|\Rightarrow 0.
\end{equation}
According to \eqref{2013-12-31-1} and Theorem 13.1 of \cite{Billingsley1999}, the proof of \eqref{2014-march-24-4} is derived in two steps. %First, we show that $\tilde {\cal T}^n_0$ and $\tilde  U^n_0$ have the same finite dimensional distribution; second, we show that $\{\tilde{\cal T}_0^n, n\in \Z_+\}$ is $C$-tight.

\paragraph{Step 1.} Establish the convergence of all the finite dimensional distributions of $\{\tilde{\cal T}_0^n, n\in \Z_+\}$ with the same limit as $\{\tilde{\cal U}_0^n, n\in \Z_+\}$.

By \eqref{eq:C-conv-e}, for any $T>0$,
\begin{equation*}
  \max_{1\leq i \leq \lfloor n\mu T \rfloor}
  \Big|\tau^n_i-\frac{i}{n\mu}\Big|
  \Rightarrow 0.
\end{equation*}
This implies that  there is a positive sequence $\{\varepsilon^n, n\in \Z_+\}$ with $\varepsilon^n \downarrow 0$ such that
\begin{equation}
  \label{2014-feb-2}
  \sup_{0\leq t\leq T}\Big| \tilde {\cal T}^n_0(t) +\tilde {\cal T}^n_1(t,\varepsilon^n)\Big|\Rightarrow 0
  \ \ \mbox{and} \ \
  \sup_{0\leq t\leq T}\Big| \tilde U^n_0(t) +\tilde U^n_1(t,\varepsilon^n)\Big|  \Rightarrow 0,
\end{equation}
where
\begin{align*}
  \tilde {\cal T}^n_1(t,\varepsilon^n)
  &= \frac{1}{\sqrt n} \sum_{i=1}^{\lfloor n\mu (T+1) \rfloor}
     \id{\frac{i-1}{n\mu}-\varepsilon^n\leq \tau^n_i\leq \frac{i}{n\mu}+\varepsilon^n}
     \Big(\id{\tau_i^n+u_i^n\leq t}-H_{\star}(t-\tau^n_i)\Big),\\
  \tilde U^n_1(t,\varepsilon^n)
  &= \frac{1}{\sqrt n}\sum_{i=1}^{\lfloor n\mu (T+1) \rfloor}
     \id{\frac{i-1}{n\mu}-\varepsilon^n\leq \tau^n_i\leq\frac{i}{n\mu}+\varepsilon^n}
     \Big(\id{\frac{i}{n\mu}+u_i^n\leq t}-H_{\star}(t-\frac{i}{n\mu})\Big).
\end{align*}
Thus, for this step, we just need to show that for each $t\leq T$, as $n\to\infty$,
\begin{equation}
  \label{2014-march-24-1}
  \tilde {\cal T}^n_1(t,\varepsilon^n)-\tilde U^n_1(t,\varepsilon^n)\Rightarrow 0.
\end{equation}
Note that %\hilit{(Note that now we need $H_\star$ to be continuous.)}
\begin{align}
  &\quad \prob \Big\{ |\tilde {\cal T}^n_1(t,\varepsilon^n)-\tilde U^n_1(t,\varepsilon^n)| >\delta\Big\} \nonumber\\
  &\leq \frac{1}{n\delta^2}\E \Big( \sum_{i=1}^{\lfloor n\mu (T+1) \rfloor}
  \id{\frac{i-1}{n\mu}-\varepsilon^n\leq \tau^n_i\leq \frac{i}{n\mu}+\varepsilon^n}
  \Big[\Big(\id{\tau_i^n+u_i^n\leq t}-H_{\star}(t-\tau^n_i)\Big)\displaybreak[3]\nonumber\\
  & {\hskip 200pt}
  -\Big(\id{\frac{i}{n\mu}+u_i^n\leq t}-H_{\star}(t-\frac{i}{n\mu})\Big)\Big]\Big)^2\displaybreak[3]\nonumber\\
  &\leq \frac{1}{n\delta^2}\sum_{i=1}^{\lfloor n\mu (T+1) \rfloor}\E \Big(
  \id{\frac{i-1}{n\mu}-\varepsilon^n\leq \tau^n_i\leq \frac{i}{n\mu}+\varepsilon^n}
  \Big[\Big(\id{\tau_i^n+u_i^n\leq t}-H_{\star}(t-\tau^n_i)\Big)\displaybreak[3]\nonumber\\
  & {\hskip 200pt}
  -\Big(\id{\frac{i}{n\mu}+u_i^n\leq t}-H_{\star}(t-\frac{i}{n\mu})\Big) \Big]^2\Big)\displaybreak[3]\nonumber\\
  &\leq \frac{4}{n\delta^2}\sum_{i=1}^{\lfloor n\mu (T+1) \rfloor}
  \Big(H_\star(t-\frac{i-1}{n\mu}+\varepsilon^n)-H_\star(t-\frac{i}{n\mu}-\varepsilon^n)\Big).
  \label{2014-march-24-2}
\end{align}
Consider the set of intervals given by
\begin{equation*}
  \Big\{ \Big((t-\frac{i}{n\mu}-\varepsilon^n)^+, \ \ (t-\frac{i-1}{n\mu}+\varepsilon^n)^+\Big]
  , i=1,\cdots, \lfloor n\mu (T+1) \rfloor\Big\}.
\end{equation*}
Note that the intervals $\Big(t-\frac{i}{n\mu}-\varepsilon^n, \  t-\frac{i-1}{n\mu}+\varepsilon^n\Big]$ and $\Big(t-\frac{j}{n\mu}-\varepsilon^n, \  t-\frac{j-1}{n\mu}+\varepsilon^n\Big]$ with $i<j$ are disjoint if $j \geq i+1+\lceil 2n\mu \varepsilon^n \rceil$.
Thus the set can be partitioned into $ 2+\lceil 2n\mu \varepsilon^n \rceil$ groups such that any two intervals in each group are disjoint. Therefore, %the right hand side of \eqref{2014-march-24-2} can be bounded by
\begin{align*}
&\frac{4}{n\delta^2}\sum_{i=1}^{\lfloor n\mu (T+1) \rfloor}
  \Big(H_\star(t-\frac{i-1}{n\mu}+\varepsilon^n)-H_\star(t-\frac{i}{n\mu}-\varepsilon^n)\Big)\\
  & \ \ \ \leq \frac{4}{n\delta^2} \times \Big(2+\lceil 2n\mu \varepsilon^n \rceil\Big) \rightarrow 0
  \ \ \mbox{as }n\rightarrow \infty.
\end{align*}
Thus, \eqref{2014-march-24-1} is proved by  \eqref{2014-march-24-2}.

\paragraph{Step 2.} Establish the tightness of $\{\tilde{\cal T}_0^n, n\in \Z_+\}$.

We first present a simple proof of the tightness, which is itself interest and requires that function $H_{\star}(\cdot)$ is locally Lipschitz continuous.
Note that for any $t_1\leq t\leq t_2$,
\begin{align}
  &\quad \E \Big[\Big(\tilde{\cal T}^n_0 (t )-\tilde{\cal T}^n_0 (t_1 )\Big)^2\times
                 \Big(\tilde{\cal T}^n_0  (t_2 )-\tilde{\cal T}^n_0 (t )
                 \Big)^2\Big]\displaybreak[3]\nonumber\\
  & \ \ \ \ \ = \frac{1}{n^2} \sum_{i,j=1}^{\fl{n\mu (T+1)}}
  \E\Big[\Big(
      \id{t_1<\tau_i^n+u_i^n\leq t}-[H_{\star}(t-\tau^n_i)-H_{\star}(t_1-\tau^n_i)]
    \Big)^2 \displaybreak[3]\nonumber\\
  &   {\hskip 105pt} \times  \Big(
      \id{t<\tau_j^n+u_j^n\leq t_2}-[H_{\star}(t_2-\tau^n_j)-H_{\star}(t-\tau^n_j)]
    \Big)^2 \Big]\displaybreak[3]\nonumber\\
   & \ \ \ \quad +\frac{2}{n^2}\sum_{i\neq j}\E\Big[ \Big(\id{t_1<\tau_i^n+u_i^n\leq t}-[H_{\star}(t-\tau^n_i)-H_{\star}(t_1-\tau^n_i)]\Big) \displaybreak[3]\nonumber\\
   &{\hskip 95pt} \times \Big(\id{t<\tau_i^n+u_i^n\leq t_2}-[H_{\star}(t_2-\tau^n_i)-H_{\star}(t-\tau^n_i)]\Big)\displaybreak[3]\nonumber\\
   &   {\hskip 95pt}  \times   \Big(\id{t_1<\tau_j^n+u_j^n\leq t}-[H_{\star}(t-\tau^n_j)-H_{\star}(t_1-\tau^n_j)]\Big) \displaybreak[3]\nonumber\\
   &  {\hskip 95pt} \times  \Big(\id{t<\tau_j^n+u_j^n\leq t_2}-[H_{\star}(t_2-\tau^n_j)-H_{\star}(t-\tau^n_j)]\Big) \Big]\displaybreak[3]\nonumber\\
  & \ \ \ \ \ \ \leq 3\mu^2(T+1)^2 \sup_{0\leq s\leq T}\Big(H_{\star}(t_2-s)-H_{\star}(t_1-s)\Big)^2.\nonumber
 \end{align}
When $H_{\star}(\cdot)$ is locally Lipschitz continuous, the right-hand side of the above inequality can be bounded by $\Lambda (t_2-t_1)^2$ for some constant $\Lambda$. So the tightness of $\{\tilde{\cal T}_0^n, n\in \Z_+\}$ follows from Theorem 13.5 of \cite{Billingsley1999}.

Now consider the case without the local Lipschitz continuity of $H_{\star}(\cdot)$. Note that by \eqref{2014-march-24-3}, the  tightness of $\{\tilde{\cal T}_0^n, n\in \Z_+\}$ and the tightness of $\{\tilde{\cal T}^n, n\in \Z_+\}$ are equivalent. So it is sufficient to prove the tightness of  $\{\tilde{\cal T}^n, n\in \Z_+\}$.
Let %$H^-_{\star}(y-)$ the left-hand limit of $H_{\star}(\cdot)$ at $y$, and
\begin{eqnarray*}
 \tilde{U}^n(t, x)  &=&\frac{1}{\sqrt{n}}\sum_{i=1}^{C^n(t)} \Big(\id{u^n_i\leq x}-H_{\star}(x)\Big), \quad t\geq 0, x\geq 0,\\
  \tilde{\cal T}^n_c(t)&=&\frac{1}{\sqrt{n}}\sum_{i=1}^{C^n(t)}\Big(\id{0<u^n_i\leq t-\tau^n_i}-\int_{0+}^{u^n_i\wedge (t-\tau^n_i)^+}\frac{d H_{\star}(s)}{1-H_{\star}(s-)}\Big).
 \end{eqnarray*}
Then we have
\begin{equation}\label{eq:tight-three-part}
\begin{split}
  \tilde {\cal T}^n(t)=&\int_0^t \frac{\tilde{U}^n(t-s, s-)}{1-H_{\star}(s-)}d H_{\star}(s-)-\frac{1}{\sqrt{n}}\sum_{i=1}^{C^n(t)}
  \Big(\id{u^n_i=0}-H_{\star}(0)\Big)-\tilde{\cal T}_c^n(t).
\end{split}
\end{equation}
Thus the tightness of $\{\tilde {\cal T}^n, n\in \Z_+\}$ follows from
the tightness of the three terms on the right-hand side of
\eqref{eq:tight-three-part}. The tightness of the second term  in
\eqref{eq:tight-three-part} follows directly from
(\ref{eq:C-conv-e}). For the first term in
\eqref{eq:tight-three-part}, we can divide it into two parts (for
any $\varepsilon>0$):
\begin{eqnarray}\label{eq:two-indicator}
  \int_0^t \frac{\tilde{U}^n(t-s, s-)}{1-H_{\star}(s-)}\id{H_{\star}(s-)>1-\varepsilon}d H_{\star}(s-)+\int_0^t \frac{\tilde{U}^n(t-s, s-)}{1-H_{\star}(s-)}\id{H_{\star}(s-)\leq 1-\varepsilon}d H_{\star}(s-).
\end{eqnarray}
In the same way \cite{KrichaginaPuhalskii1997} proved their Lemma~3.4, we obtain the tightness of the second term in (\ref{eq:two-indicator}) and
\begin{equation*}
  \lim_{\varepsilon\downarrow 0}\limsup_{n\to\infty}\mathbb{P} \Big(\sup_{t\leq T} \Big|\int_0^t \frac{\tilde{V}^n (t-s, s-)}{1-H_{\star}(s-)}\id{H_{\star}(s-)>1-\varepsilon}d H_{\star}(s-)\Big|>\delta \Big)=0.
\end{equation*}

Finally we prove the tightness of the third term  on the right-hand side of \eqref{eq:tight-three-part}.
 Let
 \[
 {\cal F}^n_t=\sigma\Big\{C^n(s): s\leq t\Big\} \vee \sigma \Big\{\id{\tau^n_i+u^n_i\leq s}: s\leq t, i=1,\cdots, C^n(t)\Big\}.
 \]
 %and $\zeta^n_T$ be the stopping time no grater that $T$ with respect to $\{{\cal F}^n_t: t\geq 0\}$.
 Then for each positive integer $k$,
 \begin{equation*}
  \tilde{\cal T}^n_{c,k}(t)=\frac{1}{\sqrt{n}}\sum_{i=1}^{C^n(t)\wedge k}\Big(\id{0< u^n_i\leq t-\tau^n_i}-\int_{0+}^{u^n_i\wedge (t-\tau^n_i)^+}\frac{dH_{\star}(s)}{1-H_{\star}(s-)}\Big)
\end{equation*}
is an ${\cal F}^n_t$-square-integrable martingale with the predictable quadratic-variation process
\begin{equation*}
  \langle \tilde{\cal T}^n_{c,k}\rangle(t)=\frac{1}{n}\sum_{i=1}^{C^n(t)\wedge k}\int_{0+}^{u^n_i\wedge (t-\tau^n_i)^+}\frac{1-H_{\star}(s)}{(1-H_{\star}(s-))^2}dH_{\star}(s).
\end{equation*}
Then the tightness of the third term follows the same argument used by \cite{KrichaginaPuhalskii1997} in the proof of their Lemma 3.7.

Combining steps 1 and 2 yields the convergence of $\{\tilde{\cal T}_0^n, n\in \Z_+\}$ with the same limit as that of $\{\tilde{\cal U}_0^n, n\in \Z_+\}$. Therefore, we have \eqref{2014-march-24-4} which implies the lemma.
\end{proof}

\subsection{Proofs of  Propositions  \ref{prop:waiting-time-stoch-bound}--\ref{prop:little-law}}
\label{Appendix-Prop}

In this section, we provide the proofs for Propositions \ref{prop:waiting-time-stoch-bound}--\ref{prop:little-law}.
In order to prove the propositions, we need the following lemma. For each $\delta>0$, let
\begin{equation*}
  \tilde L^n_\delta(t)=\frac{1}{\sqrt n}\sum_{i=1}^{\lfloor nt \rfloor }
  \Big(\id{\gamma^n_i \leq \frac{\delta}{\sqrt n}}
    -F^n\Big(\frac{\delta}{\sqrt n}\Big)\Big).
\end{equation*}

\begin{lemma}\label{lemma:anadonment-CLT}
For a fixed $\delta>0$, if $F^n\left(\frac{\delta}{\sqrt n}\right)\rightarrow 0$ as $n\rightarrow\infty$ $($which is implied by \eqref{eq:pat-distr-scale}$)$, then for any
$T>0$,
\begin{equation*}
  \sup_{0\leq t \leq T}|\tilde L^n_\delta(t)| \Rightarrow 0,\quad \textrm{as }n\rightarrow \infty.
\end{equation*}
\end{lemma}
\begin{proof}
  Denote $p_n=F^n\left(\frac{\delta}{\sqrt n}\right)$ and
  $X_{ni}=\id{\gamma^n_i \leq \frac{\delta}{\sqrt n}} - p_n$. Then for any $\varepsilon>0$,
  \begin{eqnarray}
    \prob\Big\{ \sup_{0\leq t \leq T} |\tilde L^n_\delta(t)|>\varepsilon \Big\}
    =\prob\Big\{ \max_{1\leq k \leq \lfloor nT \rfloor}
      \Big|\tilde L^n_\delta\Big(\frac{k}{n} \Big)\Big| >\varepsilon\Big\}.
    \label{2012-May-17}
  \end{eqnarray}
  Note that for each $n$, $\{X_{ni}, i\in \Z_+\}$ are independent and identically distributed random variables with $\E \left( X_{ni} \right)^2=p_n(1-p_n)<\infty$. By the Kolmogorov's inequality  (see page 133 of \cite{ChowTeicher2003})
  \begin{align*}
    \prob\Big\{ \max_{1\leq k \leq \lfloor nT \rfloor}
      \Big|\tilde L^n_\delta\Big(\frac{k}{n} \Big)\Big| >\varepsilon\Big\}\leq
      &\frac{1}{\varepsilon^2}\E  |\tilde L^n_\delta\left(\lfloor T \rfloor \right)|^2\\
     =& \frac{1}{\varepsilon^2 n} \sum_{i=1}^{\fl{nT}} \E \left( X_{ni} \right)^2\nonumber\\
     \leq &\frac{1}{\varepsilon^2}T p_n(1-p_n)\to 0,
  \end{align*}
  according to the assumption that $p_n\rightarrow 0$ as $n\to\infty$.
  Thus the lemma follows from \eqref{2012-May-17}.
\end{proof}

\begin{proof}[Proof of Proposition~\ref{prop:waiting-time-stoch-bound}]
  First we look at the sequence of the scaled virtual waiting times $\{\tilde \omega^n, n\in\Z_+\}$. 
  According to FCFS, for any $s\in(0, \tilde{\omega}^n(t))$, customers who arrive
during the interval $(t, t+\frac{s}{\sqrt{n}})$ will not receive
service until $t+\frac{s}{\sqrt{n}}$, hence they either stay in the
queue or have abandoned by $t+\frac{s}{\sqrt{n}}$. So for any $s\in
(0, \ \tilde \omega^n(t))$,
\begin{equation}\label{eq:citeDaiHe}
  E^n\left(t+\frac{s}{\sqrt n}\right)-E^n(t)
  \leq Q^n\left(t+\frac{s}{\sqrt n}\right)
  +\sum_{i=E^n(t)+1}^{E^n(t+s/\sqrt n)}
   \id{\gamma^n_i \leq \frac{s}{\sqrt n}}.
\end{equation}
The above inequality \eqref{eq:citeDaiHe} implies that
\begin{align*}
 \tilde E^n\left(t+\frac{s}{\sqrt n}\right)-\tilde E^n(t)+\frac{\lambda^n s}{n}
 &\leq
 \tilde Q^n \left(t+\frac{s}{\sqrt n}\right)
 +\tilde L^n_{s}\left(\bar E^n \Big(t+\frac{s}{\sqrt n}\Big)\right)
 -\tilde L^n_{s}\left(\bar E^n \left(t\right) \right)\\
 & \quad +\sqrt n \cdot F^n\left(\frac{s}{\sqrt n}\right)
 \cdot \left(\bar E^n \Big(t+\frac{s}{\sqrt n}\Big) -\bar E^n(t)\right).
\end{align*}
Then
\begin{align}\allowdisplaybreaks
  \mathbb{P}\left\{\sup_{0\leq t\leq T}\tilde \omega^n(t)>s\right\}
  &\leq \mathbb{P}\left\{\inf_{0\leq t\leq T}
               \Big[\tilde E^n\left(t+\frac{s}{\sqrt n}\right)
                    -\tilde E^n(t)+\frac{\lambda^n s}{n}
                    -\tilde Q^n \left(t+\frac{s}{\sqrt n}\right)\right.
                    \nonumber\\
                    &\left. {\hskip 60pt}
                    - \tilde L^n_{s}\left(\bar E^n \Big(t+\frac{s}{\sqrt n}\Big) \right)
                    +\tilde L^n_{s}\left(\bar E^n \left(t\right) \right)
                    \right.\nonumber\\
                    &\left. {\hskip 60pt}
                    -\sqrt n \cdot F^n\left(\frac{s}{\sqrt n}\right)
                    \cdot \left(\bar E^n \Big(t+\frac{s}{\sqrt n}\Big) -\bar E^n(t)\right)
               \Big] \leq 0\right\}\nonumber \displaybreak[0]\\
  &\leq \mathbb{P}\Big\{\inf_{0\leq t\leq T}
          \left[\tilde E^n\left(t+\frac{s}{\sqrt n}\right)
            -\tilde E^n(t)+\frac{\lambda^n s}{n}\right]
          \leq \frac{\mu s}{2}\Big\}\nonumber\\
  &\quad+\mathbb{P}\Big\{ \sup_{0\leq t \leq T}
          \tilde Q^n \left(t+\frac{s}{\sqrt n}\right)
         \geq \frac{\mu s}{12} \Big\}\nonumber\\
  &\quad+\mathbb{P}\Big\{ \sup_{0\leq t \leq T}
          \Big|\tilde L^n_{s}\left(\bar E^n \Big(t+\frac{s}{\sqrt n}\Big) \right)
          -\tilde L^n_{s}\left(\bar E^n \left(t\right) \right)\Big|
          \geq \frac{\mu s}{12}  \Big\}\nonumber\\
  &\quad+\mathbb{P}\Big\{ \sup_{0\leq t \leq T}
          \sqrt n \cdot F^n\left(\frac{s}{\sqrt n}\right)
          \cdot \left(\bar E^n \Big(t+\frac{s}{\sqrt n}\Big) -\bar E^n(t)\right)
          \geq \frac{\mu s}{12} \Big\},\nonumber
\end{align}
where $\mu$ is given by \eqref{eq:lambda-limit}. It follows from
\eqref{eq:lambda-limit}--\eqref{eq:arrival-diffusion} that as
$n\rightarrow \infty$,
\begin{align}
  \label{2012-march-25-5}
  &\mathbb{P}\Big\{\inf_{0\leq t\leq T}
    \left[ \tilde E^n\left(t+\frac{s}{\sqrt n}\right)
          -\tilde E^n(t)+\frac{\lambda^n s}{n}\right]
       \leq \frac{\mu s}{2}\Big\} \rightarrow 0,
\end{align}
Lemma \ref{lemma:anadonment-CLT} and \eqref{eq:arrival-fluid} imply that as $n\to\infty$,
\begin{align}
  \label{2012-march-25-6}
  & \mathbb{P}\Big\{ \sup_{0\leq t \leq T}
    \Big| \tilde L^n_{s}\left(\bar E^n \left(t+\frac{s}{\sqrt n}\right) \right)
         -\tilde L^n_{s}\left(\bar E^n \left(t\right) \right)\Big|
       \geq \frac{\mu s}{12} \Big\}\rightarrow 0.
\end{align}
By \eqref{eq:pat-distr-scale} and \eqref{eq:arrival-fluid}, as $n\rightarrow \infty$,
\begin{align}
  \label{2012-march-25-7}
  \mathbb{P}\Big\{ \sup_{0\leq t \leq T}
    \sqrt n \cdot F^n\left(\frac{s}{\sqrt n}\right)
    \cdot \left(\bar E^n \Big(t+\frac{s}{\sqrt n}\Big) -\bar E^n(t)\right)
    \geq \frac{\mu s}{12}
  \Big\}\rightarrow 0.
\end{align}
Hence, the stochastic boundedness of $\{\tilde \omega^n, n\in\Z_+\}$
follows from assumption~\eqref{eq:stoc-bdd} and
\eqref{2012-march-25-5}--\eqref{2012-march-25-7}.
%\end{proof}

Next we look at $\{ \tilde F^n_\omega, n\in\Z_+\}$. It is sufficient
to show that for any given $T>0$,
\begin{equation}\label{2014-7-7}
  \lim_{\Gamma\rightarrow \infty}\limsup_{n\rightarrow \infty}
  \pr{\sup\limits_{0\leq i\leq E^n(T)}\sqrt{n}F^n(\omega^n_i)\geq \Gamma}=0,
\end{equation}
where $\omega^n_i$ is the offered waiting time.
According to Lemma 3.2  of \cite{DaiHe2010} and the monotonicity of the distribution function $F^n(\cdot)$,
\begin{equation}
  \label{eq:tech-prob-bound-8-6-2}
  \pr{\sup\limits_{0\leq i\leq E^n(T)}\sqrt{n}F^n(\omega^n_i)\geq \Gamma}
  \leq
  \pr{\sup\limits_{0\leq t\leq T}\sqrt{n}
    F^n(\frac{1}{\sqrt{n}}\tilde{\omega}^n(t))\geq \Gamma}.
\end{equation}
Thus, it is enough to prove
\begin{equation}\label{equation:limitwaiting}
  \lim\limits_{\Gamma\rightarrow \infty}\limsup\limits_{n\rightarrow \infty}\pr{\sup\limits_{0\leq t\leq T}
  \sqrt{n}F^n(\frac{1}{\sqrt{n}}\tilde{\omega}^n(t))\geq \Gamma}=0.
\end{equation}
Next, we note
\begin{equation*}
  \pr{\sup\limits_{0\leq t\leq T}\sqrt{n}F^n(\frac{1}{\sqrt{n}}\tilde{\omega}^n(t))\geq \Gamma}\leq \pr{\sqrt{n}F^n(\frac{1}{\sqrt{n}}\Gamma_1)\geq \Gamma}+\pr{\sup_{0\leq t\leq T}\tilde{\omega}^n(t)\geq \Gamma_1}.
\end{equation*}
For any given $\varepsilon>0$, by stochastic boundedness of
$\{\tilde{\omega}^n, n\in \Z_+\}$, we can choose $\Gamma_1$ such
that
 \begin{equation*}
   \limsup_{n\rightarrow \infty}\pr{\sup_{0\leq t\leq T}\tilde{\omega}^n(t)\geq \Gamma_1}\leq \frac{\varepsilon}{2}.
 \end{equation*}
Now, from \eqref{eq:pat-distr-scale}, for the $\Gamma_1$ fixed above,
\begin{equation*}
 \lim\limits_{\Gamma\rightarrow \infty} \limsup_{n\rightarrow \infty}\pr{\sqrt{n}F^n(\frac{1}{\sqrt{n}}\Gamma_1)
 \geq \Gamma}=0.
\end{equation*}
Thus, for any given $\varepsilon>0$, there is a $\Gamma_0$ such that when $\Gamma\geq \Gamma_0$,
\begin{equation}
  \limsup_{n\rightarrow \infty}\pr{\sup\limits_{0\leq t\leq T}\sqrt{n}F^n(\frac{1}{\sqrt{n}}\tilde{\omega}^n(t))\geq \Gamma}  \leq \varepsilon.
\end{equation}
This completes the proof of \eqref{equation:limitwaiting}. Thus
(\ref{2014-7-7}) is proved due to \eqref{eq:tech-prob-bound-8-6-2}.
Hence, the proof of the proposition is completed.
\end{proof}

An immediate consequence of (\ref{2014-7-7}) is that
  \begin{equation}
    \label{eq:tech-bound-F(w)}
    \E\Big[\sup_{0\leq i\leq E^n(T)}F^n(\omega^n_i)\Big]\to 0, \quad \textrm{as }n\to\infty.
  \end{equation}
This will help to prove
Lemma~\ref{lemma:centralizedindicationfunction}  below, which is an
extension of Proposition~4.2  of \cite{DaiHe2010}, where
$F^n(\cdot)=F(\cdot)$. The general approach of the proof is the same
whether $F^n(\cdot)$'s are the same or vary with $n$. That is, we need to use the martingale convergence theorem (cf.\ Lemma 4.3  of \cite{DaiHe2010} and \cite{Whitt2007}). The key
condition for applying the theorem is \eqref{eq:tech-bound-F(w)}. We,
thus,   present the result without repeating the proof.

\begin{lemma}\label{lemma:centralizedindicationfunction}
  Under assumptions \eqref{eq:lambda-limit}--\eqref{eq:pat-distr-scale} and \eqref{eq:stoc-bdd},
  \begin{equation*}
    \sup_{0\leq t\leq T}\Big|
       \frac{1}{\sqrt{n}}\sum_{i=1}^{\lfloor nt \rfloor}
       \left(\id{\gamma^n_i\leq \omega^n_i}-F^n(\omega^n_i)\right)
       \cdot g(\omega^n_i)
    \Big|\Rightarrow 0 \ \hbox{as}\ n\rightarrow\infty,
  \end{equation*}
  where $g(\cdot): \R_+\rightarrow \R_+$ is a Borel measurable function such that $0\leq g(t)\leq 1$ for all $t\in \R_+$.
\end{lemma}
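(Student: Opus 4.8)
The plan is to exploit the martingale structure of the centered sum and then convert control of its quadratic variation into uniform convergence, using the bound \eqref{eq:tech-bound-F(w)} as the key analytic input; this is essentially the route of Proposition~4.2 of \cite{DaiHe2010}. The crucial observation is that the offered waiting time $\omega^n_i$ of the $i$th customer is determined by the arrival process together with the fates of the earlier customers $1,\dots,i-1$, each of whom abandons precisely when $\gamma^n_j\le\omega^n_j$. Hence, setting $\mathcal{F}^n_i=\sigma(E^n)\vee\sigma(\gamma^n_1,\dots,\gamma^n_{i-1})$, an induction shows that $\omega^n_i$, and therefore $h(\omega^n_i)$, is $\mathcal{F}^n_i$-measurable, whereas $\gamma^n_i$ is independent of $\mathcal{F}^n_i$ with law $F^n$. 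Consequently $\E[\id{\gamma^n_i\le\omega^n_i}\mid\mathcal{F}^n_i]=F^n(\omega^n_i)$, so that
\[
\xi^n_i:=\bigl(\id{\gamma^n_i\le\omega^n_i}-F^n(\omega^n_i)\bigr)\,h(\omega^n_i)
\]
is a martingale difference sequence and $S^n(t):=\frac{1}{\sqrt n}\sum_{i=1}^{\lfloor nt\rfloor}\xi^n_i$ is a piecewise-constant martingale in $t$.

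First I would bound the predictable quadratic variation. Since $h\le 1$ and $\E[(\id{\gamma^n_i\le\omega^n_i}-F^n(\omega^n_i))^2\mid\mathcal{F}^n_i]=F^n(\omega^n_i)(1-F^n(\omega^n_i))$, one gets
\[
\langle S^n\rangle_T=\frac{1}{n}\sum_{i=1}^{\lfloor nT\rfloor}h(\omega^n_i)^2\,F^n(\omega^n_i)\bigl(1-F^n(\omega^n_i)\bigr)\le\frac{\lfloor nT\rfloor}{n}\sup_{1\le i\le\lfloor nT\rfloor}F^n(\omega^n_i).
\]
Second, I would show the right-hand side vanishes. Because the sum is indexed by customer number rather than by arrival time, I would pass to a horizon $T'>T/\mu$ so that, by the fluid limit \eqref{eq:arrival-fluid}, $\prob\{E^n(T')\ge\lfloor nT\rfloor\}\to 1$; on that event $\sup_{i\le\lfloor nT\rfloor}F^n(\omega^n_i)\le\sup_{0\le i\le E^n(T')}F^n(\omega^n_i)$, while on its complement the supremum is at most $1$. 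Splitting the expectation accordingly and invoking \eqref{eq:tech-bound-F(w)} at $T'$ gives $\E[\sup_{i\le\lfloor nT\rfloor}F^n(\omega^n_i)]\to 0$, whence $\langle S^n\rangle_T\Rightarrow 0$.

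Finally, I would conclude through a martingale maximal inequality. Either the Lenglart--Rebolledo inequality or Doob's $L^2$ inequality, which yields $\E[\sup_{0\le t\le T}S^n(t)^2]\le 4\,\E[\langle S^n\rangle_T]\to 0$, upgrades the convergence of the quadratic variation to $\sup_{0\le t\le T}|S^n(t)|\Rightarrow 0$; this is the role of the martingale convergence theorem cited from \cite{Whitt2007} and Lemma~4.3 of \cite{DaiHe2010}. The main obstacle is not any single estimate but the careful setup of the filtration and the verification that $\omega^n_i$ is adapted to it, since $\omega^n_i$ depends implicitly on the abandonment decisions of earlier customers; once the martingale property is secured the heavy lifting is done by \eqref{eq:tech-bound-F(w)}, and the only residual technical wrinkle is reconciling the customer-index horizon $\lfloor nT\rfloor$ with the arrival-time horizon $E^n(T')$.
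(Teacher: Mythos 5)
Your overall architecture is exactly the one the paper intends (and delegates to Proposition~4.2 of \cite{DaiHe2010}): write the centered sum as a martingale, bound its predictable quadratic variation by $T\sup_i F^n(\omega^n_i)$, kill that bound via \eqref{eq:tech-bound-F(w)}, and conclude with a martingale maximal inequality (Doob or Lenglart--Rebolledo). Your reconciliation of the customer-index horizon $\lfloor nT\rfloor$ with the arrival-time horizon $E^n(T')$ through the fluid limit \eqref{eq:arrival-fluid} is correct and addresses a real subtlety, since \eqref{eq:tech-bound-F(w)} is stated over $0\le i\le E^n(T)$ rather than over a deterministic range of indices.

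There is, however, a concrete flaw in precisely the step you identify as the crux. With $\mathcal{F}^n_i=\sigma(E^n)\vee\sigma(\gamma^n_1,\dots,\gamma^n_{i-1})$, the offered waiting time $\omega^n_i$ is \emph{not} $\mathcal{F}^n_i$-measurable: it is determined not only by the arrival process and the abandonment decisions of earlier customers, but also by how quickly servers are freed, i.e., by the (remaining) service times of the customers initially in service and of the customers $j<i$ who entered service before customer $i$. These are random and are not functions of $(E^n,\gamma^n_1,\dots,\gamma^n_{i-1})$; already in a single-server queue $\omega^n_2$ involves the service time $v^n_1$. Consequently the identity $\E\big[\id{\gamma^n_i\le \omega^n_i}\,\big|\,\mathcal{F}^n_i\big]=F^n(\omega^n_i)$ is not established, and the martingale-difference property on which the whole argument rests fails as written. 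The repair is standard but must be stated: enlarge the filtration to include the entire service-time sequence and the initial configuration of the system, so that $\mathcal{F}^n_i$ is generated by $E^n$, all $v^n_j$, the initial state, and $\gamma^n_1,\dots,\gamma^n_{i-1}$; then the induction showing $\omega^n_i\in\mathcal{F}^n_i$ does go through, and $\gamma^n_i$ remains independent of this larger $\sigma$-field because the patience times are assumed independent of the arrival process \emph{and} of the service times (this independence is explicit in Section~3 of the paper and in \cite{DaiHe2010}, and is implicitly needed in the $G/G/N_n+GI$ framework of Section~2). With that correction, the rest of your proof is sound and coincides with the paper's intended argument.
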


Similar to \cite{DaiHeTezcan2010}, we define the process
\begin{equation*}
  \zeta^n(t)=\inf\{s\geq 0: s+\omega^n(s)\geq t\}.
\end{equation*}
It is clear that $\zeta^n\in \D(\R_+, \R)$ and is
nondecreasing for each $n\in \Z_+$.

\begin{lemma}\label{lem:timediff}
  % If $\{\tilde Q^n,n\in\Z_+\}$ is stochastically bounded, then
  Under assumptions \eqref{eq:lambda-limit}--\eqref{eq:pat-distr-scale} and \eqref{eq:stoc-bdd}, as $n\to\infty$
  \begin{equation*}
    \sup\limits_{0\leq t\leq T}|\zeta^n(t)-t|\Rightarrow 0.
  \end{equation*}
\end{lemma}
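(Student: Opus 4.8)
The plan is to sandwich $\zeta^n(t)$ between $t-\theta_n$ and $t$, where $\theta_n:=\sup_{0\le s\le T}\omega^n(s)=\tfrac{1}{\sqrt n}\sup_{0\le s\le T}\tilde\omega^n(s)$, and then invoke the stochastic boundedness of $\{\tilde\omega^n,n\in\Z_+\}$ supplied by Proposition~\ref{prop:waiting-time-stoch-bound}. Both inequalities are purely deterministic and follow from the definition $\zeta^n(t)=\inf\{s\ge 0:s+\omega^n(s)\ge t\}$ together with $\omega^n\ge 0$; in particular, no monotonicity of the map $s\mapsto s+\omega^n(s)$ is required.

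First I would note the upper bound $\zeta^n(t)\le t$. Since $\omega^n(t)\ge 0$, the choice $s=t$ satisfies $s+\omega^n(s)=t+\omega^n(t)\ge t$, so $t$ lies in the set whose infimum defines $\zeta^n(t)$; hence $\zeta^n(t)\le t$ for every $t\in[0,T]$.

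Next I would prove the matching lower bound $\zeta^n(t)\ge t-\theta_n$. For any $s\ge 0$ with $s<t-\theta_n$ we have $s+\omega^n(s)\le s+\theta_n<t$, so such an $s$ fails the constraint $s+\omega^n(s)\ge t$ and therefore does not belong to the set over which the infimum is taken. That set is thus contained in $[\,t-\theta_n,\infty)$, which gives $\zeta^n(t)\ge t-\theta_n$; when $t<\theta_n$ there is nothing to prove since $\zeta^n(t)\ge 0\ge t-\theta_n$. Combining the two bounds yields $0\le t-\zeta^n(t)\le\theta_n$ for all $t\in[0,T]$, and hence the uniform estimate $\sup_{0\le t\le T}|\zeta^n(t)-t|\le\theta_n$.

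Finally I would pass to the limit. Proposition~\ref{prop:waiting-time-stoch-bound} states that $\{\tilde\omega^n,n\in\Z_+\}$ is stochastically bounded, so for any $\varepsilon>0$ and any fixed $\Gamma>0$, once $n$ is large enough that $\varepsilon\sqrt n\ge\Gamma$ we have $\pr{\theta_n>\varepsilon}=\pr{\sup_{0\le s\le T}\tilde\omega^n(s)>\varepsilon\sqrt n}\le\pr{\sup_{0\le s\le T}\tilde\omega^n(s)>\Gamma}$; taking $\limsup_{n\to\infty}$ and then letting $\Gamma\to\infty$ shows $\theta_n\Rightarrow 0$. Since $\sup_{0\le t\le T}|\zeta^n(t)-t|\le\theta_n$, the desired convergence follows. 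The only step that is not entirely mechanical is this last conversion of the stochastic boundedness of $\{\tilde\omega^n\}$ into convergence in probability of $\theta_n=\tfrac{1}{\sqrt n}\|\tilde\omega^n\|_T$ to zero; the remainder is the deterministic sandwich above, so I do not anticipate any genuine obstacle.
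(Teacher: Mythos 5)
Your proof is correct and takes essentially the same approach as the paper: both reduce the statement to the deterministic sandwich $0\le t-\zeta^n(t)\le \sup_{0\le s\le T}\omega^n(s)$ and then convert the stochastic boundedness of $\{\tilde\omega^n, n\in\Z_+\}$ from Proposition~\ref{prop:waiting-time-stoch-bound} into $\frac{1}{\sqrt n}\sup_{0\le s\le T}\tilde\omega^n(s)\Rightarrow 0$. The only cosmetic difference is that the paper routes through the intermediate bound $t-\zeta^n(t)\le\omega^n(\zeta^n(t))$ (which tacitly uses right-continuity of $\omega^n$ so that the infimum point satisfies the defining constraint), whereas your bound by $\theta_n$ avoids that regularity point entirely.
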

\begin{proof}
By the definition of $\zeta^n(t)$, for any $t\geq 0$,
\begin{equation*}
  0\le t-\zeta^n(t)\le \omega^n(\zeta^n(t)).
\end{equation*}
Hence,
\[
\sup_{0\leq t\leq T}|\zeta^n(t)-t| \leq \sup_{0\leq t\leq T}  \omega^n(\zeta^n(t)) \leq \sup_{0\leq t\leq T}  \omega^n( t).
\]
Thus, the result follows from Proposition \ref{prop:waiting-time-stoch-bound}.
\end{proof}

\begin{proof}[Proof of Proposition \ref{prop:abandon-patience-relation}]
According to Lemma~\ref{lemma:centralizedindicationfunction}, it suffices to show that as $n\to\infty$,
 \begin{equation*}
   \sup_{0\leq t\leq T}
   \Big|
    \tilde{G}^n(t)-\frac{1}{\sqrt{n}}\sum_{i=1}^{E^n(t)}\id{\gamma^n_i\leq \omega^n_i}
   \Big|\Rightarrow 0.
 \end{equation*}
As a customer arriving at the system before time $\zeta^n(t)$  must
have either entered service or abandoned the queue by time $t$, we have the
following relationship:
\begin{equation*}
  \sum_{i=1}^{E^n(\zeta^n(t)-)}\id{\gamma^n_i\leq \omega^n_i}\leq G^n(t)
  \leq \sum_{i=1}^{E^n(t)}\id{\gamma^n_i\leq \omega^n_i}.
\end{equation*}
Hence, it is enough to prove that as $n\to\infty$,
\begin{equation}\label{2012-march-26-9}
  \sup\limits_{0\leq t\leq T}
  \frac{1}{\sqrt{n}}\sum\limits_{i=E^n(\zeta^n(t)-)+1}^{E^n(t)}
  \id{\gamma^{n}_i \leq \omega^{n}_i }\Rightarrow 0.
\end{equation}
Note that
\begin{equation}\label{2012-march-26-8}
  \begin{split}
   \sup\limits_{0\leq t\leq T}
   \frac{1}{\sqrt{n}}\sum\limits_{i=E^n(\zeta^n(t)-)+1}^{E^n(t)}
   \id{\gamma^{n}_i \leq \omega^{n}_i }
   =& \sup\limits_{0\leq t\leq T}
   \frac{1}{\sqrt{n}}\sum\limits_{i=E^n(\zeta^n(t)-)+1}^{E^n(t)}
   (\id{\gamma^{n}_i \leq \omega^{n}_i }-F^{n}(\omega^{n}_i ))\\
   &+ \sup\limits_{0\leq t\leq T}
   \frac{1}{\sqrt{n}}\sum\limits_{i=E^n(\zeta^n(t)-)+1}^{E^n(t)}F^{n}(\omega^{n}_i).
  \end{split}
\end{equation}
By Lemma~\ref{lemma:centralizedindicationfunction}, the first term
on the right-hand side of \eqref{2012-march-26-8}
will converge to 0. For the second term,
\begin{equation}\label{eq:tech-ineq-arr-abd}
  \sup_{0\leq t\leq T}
  \frac{1}{\sqrt{n}}\sum_{i=E^n(\zeta^n(t)-)+1}^{E^n(t)}F^{n}(\omega^{n}_i)
  \leq \Big(\sup_{0\leq t\leq
  T}[\bar{E}^n(t)-\bar{E}^n(\zeta^n(t)-)]\Big)\cdot\Big(
  \sup_{0\leq i\leq E^n(T)}\sqrt{n}F^{n}(\omega^{n}_i)\Big),
\end{equation}
which weakly converges to 0 due to \eqref{eq:arrival-fluid},
Proposition~\ref{prop:waiting-time-stoch-bound} and Lemma
\ref{lem:timediff}. Thus, \eqref{2012-march-26-9} holds and the
proof is completed.
\end{proof}

\begin{proof}[Proof of Proposition \ref{prop:little-law}]
First note that $\omega^n(t) = \tilde \omega^n(t)/\sqrt n$. It
directly follows from  the stochastic boundedness of $\{\tilde
\omega^n, n\in \Z_+\}$
(Proposition~\ref{prop:waiting-time-stoch-bound}) that
\begin{eqnarray}
\sup\limits_{0\leq t\leq T} \omega^n(t)\Rightarrow 0.
\label{2014-7-8}
\end{eqnarray}

 By the
definition of $\omega^n(t)$, we have
\begin{equation}
  \label{eq:tech-basic-1}
  \begin{split}
    Q^n(t+\omega^n(t)) &\leq E^n(t+\omega^n(t))-E^n(t)\\
    &\leq Q^n((t+\omega^n(t))-)
      +\Big(E^n(t+\omega^n(t))-E^n(t+\omega^n(t)-\frac{1}{n})\Big)\\
      &\quad +\sum^{E^n(t+\omega^n(t))}_{i=E^n(t)}\id{\gamma^n_i \leq \omega^n_i}.
  \end{split}
\end{equation}
Note that
\begin{align}
  \label{2011-march-26-2}
  \frac{1}{\sqrt n}\left(E^n(t+\omega^n(t))-E^n(t)\right)
  &=\tilde E^n(t+\omega^n(t))-\tilde E^n(t)
  + \frac{\lambda^n}{n} \cdot \sqrt n \cdot  \omega^n(t), \\
  \label{2012-march-26-3}
  \frac{1}{\sqrt n} \sum_{i=E^n(t)}^{E^n(t+\omega^n(t))}
  \id{\gamma^n_i \leq \omega^n_i}
  &=\frac{1}{\sqrt n} \sum_{i=E^n(t)}^{E^n(t+\omega^n(t))}
  \left(\id{\gamma^n_i \leq \omega^n_i}-F^n(\omega^n_i) \right)\nonumber\\
  &\quad +\frac{1}{\sqrt n} \sum_{i=E^n(t)}^{E^n(t+\omega^n(t))} F^n(\omega^n_i).
\end{align}
By \eqref{eq:arrival-diffusion} and  (\ref{2014-7-8}),
\begin{eqnarray}\label{2012-april-08-3}
  \sup_{0\leq t\leq T} |\tilde E^n(t+\omega^n(t))-\tilde E^n(t)|\Rightarrow 0,
 \ \textrm{ as }n\to\infty.
\end{eqnarray}
By \eqref{eq:lambda-limit} and Proposition~\ref{prop:waiting-time-stoch-bound},
\begin{equation}
  \label{eq:2}
  |\frac{\lambda^n}{n} \cdot \sqrt n \cdot  \omega^n(t)-\mu\tilde \omega^n(t)|
  \Rightarrow 0,
   \  \textrm{ as }n\to\infty.
\end{equation}
It follows from \eqref{eq:lambda-limit} and \eqref{eq:arrival-diffusion} that
\begin{align}
  &\quad\sup_{0\leq t\leq T}
  \frac{1}{\sqrt n}\Big|E^n(t+\omega^n(t))-E^n(t+\omega^n(t)-\frac{1}{n})\Big|\nonumber\\
  & \ \ \ \ \ \ \leq \sup_{0\leq t\leq T} \Big|
    \tilde E^n(t+\omega^n(t))-\tilde E^n(t+\omega^n(t)-\frac{1}{n})
  \Big| +\frac{\lambda^n}{\sqrt {n^3}}\nonumber\\
  & \ \ \ \ \ \ \Rightarrow 0\quad\textrm{as }n\to\infty.\label{2012-april-08-5}
\end{align}
Note that the inequality \eqref{eq:tech-ineq-arr-abd} also holds
with $(\zeta^n(t)-, t)$ replaced by $(t,t+\omega^n(t))$, so by
(\ref{2014-7-8}) %Corollary~\ref{corollary:abandonmentcount},
as
$n\to\infty$,
\begin{equation}
  \label{2012-april-08-2}
  \sup_{0\leq t\leq T} \frac{1}{\sqrt n} \sum_{i=E^n(t)}^{E^n(t+\omega^n(t))} F^n(\omega^n_i)
  \Rightarrow 0.
\end{equation}
Lemma \ref{lemma:centralizedindicationfunction}, \eqref{2012-march-26-3} and \eqref{2012-april-08-2} imply that as $n\to\infty$,
\begin{equation}
  \label{2012-april-08-7}
  \sup_{0\leq t\leq T}\frac{1}{\sqrt n}
  \sum_{i=E^n(t)}^{E^n(t+\omega^n(t))}\id{\gamma^n_i \leq \omega^n_i}
  \Rightarrow 0.
\end{equation}
By condition \eqref{eq:modular-cont}, as $n\to\infty$,
\begin{equation}
  \label{eq:tech-q-t+omega-}
   \sup_{0\leq t\leq T}\Big|
     \tilde Q^n(t+\omega^n(t))-\tilde Q^n((t+\omega^n(t))-)
   \Big| \Rightarrow 0.
\end{equation}
Applying the above convergence
\eqref{2012-april-08-3}--\eqref{eq:tech-q-t+omega-} to the
inequality \eqref{eq:tech-basic-1} yields that as $n\to\infty$,
\begin{equation*}
  \sup_{0\leq t\leq T}\Big|
    \tilde Q^n(t+\omega^n(t))-\mu \tilde \omega^n(t)
  \Big|\Rightarrow 0.
\end{equation*}
By condition \eqref{eq:modular-cont} and (\ref{2014-7-8}),
%Corollary~\ref{corollary:abandonmentcount},
as $n\to\infty$,
\begin{equation*}
  \sup_{0\leq t\leq T}\Big |\tilde Q^n(t+\omega^n(t))-\tilde Q^n(t)\Big | \Rightarrow 0.
\end{equation*}
Thus, the result of this proposition follows.
\end{proof}

\end{document}